\theoremstyle{definition}
\newtheorem{thm}{Theorem}[section]
\newtheorem{dfn}[thm]{Definition}
\newtheorem{prop}[thm]{Proposition}
\newtheorem{cor}[thm]{Corollary}
\newtheorem{lem}[thm]{Lemma}
\newtheorem{rem}[thm]{Remark}
\newtheorem{prop-dfn}[thm]{Proposition-Definition}
\title{On a good reduction criterion for proper polycurves with sections}
\author{Ippei Nagamachi}
\begin{document}

\maketitle

\begin{abstract}
 We give a good reduction criterion for proper polycurves with sections, i.e.,
successive extensions of family of curves with section, under mild assumption.
This criterion is a higher dimensional version of the good reduction criterion for hyperbolic curves given by Oda and Tamagawa.
\end{abstract}

\tableofcontents

\section{Introduction}

 Let $K$ be a discrete valuation field with valuation ring $O _{K}$ and residue field $k$ of characteristic $p \geq 0$.
Let $K^{\rm sep}$ be the separable closure of $K$, $G_K := {\rm Gal}(K^{\rm sep}/K)$ the absolute Galois group of $K$, and $I_K$ its inertia subgroup. 
(Note that $I_K$, as a subgroup of $G_K$, 
depends on the choice of a prime ideal 
in the integral closure of $O_K$ in $K^{\rm sep}$ 
over the maximal ideal of $O_K$, but it is independent of 
this choice up to conjugation.)

When we are given a 
variety $X$ proper and smooth over $K$, 
it is an interesting problem to find a criterion 
for $X$ to admit good reduction, that is, to have 
a scheme $\mathfrak{X}$ proper and smooth over $O_K$ with generic fiber $X$.
(Such an $\mathfrak{X}$ is called a smooth model of $X$.) 

Generalizing results of N\'eron, Ogg, and Shafarevich for elliptic curves, 
Serre and Tate \cite{ST} proved that, when $X$ is an abelian variety over $K$, 
$X$ has good reduction if and only if the action of $I_K$ on 
the first $l$-adic etale cohomology $H^1(X \otimes K^{\rm sep}, \mathbb{Q}_l)$ 
is trivial, where $l$ is a prime not equal to $p$.

When $X$ is a proper 
hyperbolic curve (a geometrically connected 
proper smooth curve with genus $\geq 2$), 
it is not always true that $X$ has good reduction even if 
the action of $I_K$ on 
$H^1(X \otimes K^{\rm sep}, \mathbb{Q}_l)$ is trivial, namely, 
the first $l$-adic etale cohomology does not have enough information 
to know whether $X$ has good reduction or not.

If we consider the pro-$l$ completion 
$\pi_1(X \otimes K^{\rm sep}, \bar{t})^{l}$ 
of the etale fundamental group 
$\pi_1(X \otimes K^{\rm sep}, \bar{t})$ 
($\bar{t}$ is a geometric point of $X \otimes K^{\rm sep}$), 
it admits an outer action of $G_K$ 
(a continuous 
homomorphism 
$\rho: G_K \rightarrow {\rm Out}(\pi_1(X \otimes K^{\rm sep}, \bar{t})^{l}) 
:=  {\rm Aut}(\pi_1(X \otimes K^{\rm sep}, \bar{t})^{l})/ 
 {\rm Inn}(\pi_1(X \otimes K^{\rm sep}, \bar{t})^{l})$), 
thus the outer action $\rho|_{I_K}$ of $I_K$. 
This is expected to have a finer information than 
the action of $I_K$ on $H^1(X \otimes K^{\rm sep}, \mathbb{Q}_l)$ 
in certain cases. Actually, Oda \cite{Oda1} \cite{Oda2} proved that, for 
a proper hyperbolic curve $X$, $X$ has good reduction if and only if 
the outer action $\rho|_{I_K}$ is trivial. 
(More strongly, he proved that $X$ has good reduction 
if and only if the outer action of $I_K$ on 
$\pi_1(X \otimes K^{\rm sep}, \bar{t})^{l}/\Gamma_{n} 
\pi_1(X \otimes K^{\rm sep}, \bar{t})^{l}$ are trivial for any $n$, 
where $\{\Gamma_{n} 
\pi_1(X \otimes K^{\rm sep}, \bar{t})^{l}\}_n$ is 
the lower central filtration of 
$\pi_1(X \otimes K^{\rm sep}, \bar{t})^{l}$.)

Note that Oda's result is natural in the framework of 
anabelian geometry: In anabelian geometry, a hyperbolic curve 
is considered as a typical anabelian variety, that is, 
a variety which is determined by its outer Galois representation
$G_{K} \rightarrow \text{Out}\, \pi_1(X\otimes K^{\rm sep}, \bar{t})$ (under suitable assumption on $K$). 

The fact that a hyperbolic curve is anabelian in this sense, 
which is called the Grothendieck conjecture, 
is proven by Tamagawa \cite{Tama} and Mochizuki \cite{Moch1}, \cite{Moch2}. 
Therefore it would be natural to expect that, for an anabelian 
variety, a similar good reduction criterion to that of Oda will hold. 

Another class of varieties which are considered as anabelian 
is the class of proper hyperbolic polycurves, that is, 
varieties $X$ which admit a strucure of succesive smooth fibrations 
\begin{equation}\label{0}
X = X_n \overset{f_n}{\rightarrow} X_{n-1} \overset{f_{n-1}}{\rightarrow} \cdots 
\overset{f_2}{\rightarrow} X_1 
\overset{f_1}{\rightarrow} \mathrm{Spec}\,K 
\end{equation}
whose fibers are proper hyperbolic curves 
(we call such a structure a sequence of parameterizing morphisms): 
Indeed, the Grothendieck conjecture is known to hold for 
proper hyperbolic polycurves of dimension up to $4$ on suitable 
assumption on $K$, by Mochizuki \cite{Moch1} and Hoshi \cite{Ho}. 
Therefore it would be natural to consider on 
good reduction criterion for hyperbolic polycurves, 
which is the main interest in this paper.
For this good reduction criterion, we can also treat the case of genus 1
thanks to the criterion of N\'eron, Ogg, and Shafarevich.

If we admit the genus in the definition of proper hyperbolic curves to be $1$,
we say the resulting variety as a proper polycurve.
We call $X$ a proper polycurve with sections if it 
admits a sequence of parameterizing morphisms (1) 
such that each $f_i$ admits a section 
(we call such a structure a sequence of parameterizing 
morphisms with sections). 
When we fix a sequence of parameterizing 
morphisms with sections (1) of $X$, 
we call the maximum of the genera of fibers of $f_i$'s 
the maximal genus of (1), 
and when only $X$ is given, 
we call the minimum of the maximal genera of 
sequences of parameterizing 
morphisms with sections of $X$ the maximal genus of $X$.

Also, for such $X$ and any closed point $x$ of $X$, 
let $K(x)$ be the residue field of $x$ and consider the 
pro-$p'$ completion $\pi_{1}(X \otimes K(x)^{\rm sep}, \bar{x})^{p'}$ 
of the etale fundamental group, where $\bar{x}$ is a geometric point 
of $X \otimes K(x)^{\rm sep}$ above $x$. 
Because $x$ is $K(x)$-rational, 
 $\pi_1(X \otimes K(x)^{\rm sep}, \bar{x})^{p'}$ admits 
an action (not just an outer action) of the absolute Galois group $G_{K(x)}$ of $K(x)$.
Thus if we take a valuation ring $O_{K(x)}$ of $K(x)$ which contains $O_{K}$, we have the action of the inertia subgroup $I_{K(x)}$ 
on $\pi_{1}(X \otimes K(x)^{\rm sep}, \bar{x})^{p'}$. Then the main theorem is described as follows:

\begin{thm}
Let $K$ be as above and let $X$ be a proper polycurve 
with sections over $K$. Let $g$ be the maximal genus of $X$. 
Consider the following conditions: 
\begin{description}
\item{(A)}\mbox{} $X$ has good reduction. 
\item{(B)}\mbox{} For any closed point $x$ of $X$, and for any choice of valuation ring $O_{K(x)}$ of $K(x)$
as above, the action of $I_{K(x)}$ on $\pi_1(X \otimes K(x)^{\rm sep}, \bar{x})^{p'}$ 
is trivial.
\end{description}
Then (A) implies (B). If $p=0$ or $p > 2g+1$, (B) implies (A). 
\end{thm}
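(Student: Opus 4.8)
The plan is to prove the two implications by quite different means: the first is a formal consequence of smooth proper base change together with Grothendieck's specialization of fundamental groups, and the second goes by induction on the dimension $n$, i.e.\ on the length of a fixed sequence of parameterizing morphisms with sections of $X$.

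For (A) $\Rightarrow$ (B): suppose $\mathfrak{X}/O_K$ is a smooth proper model of $X$, and fix a closed point $x$ and a valuation ring $O_{K(x)} \supseteq O_K$ of $K(x)$. Replacing $O_K$ by its completion changes neither the hypothesis nor the conclusion, so $O_{K(x)}$ is a complete discrete valuation ring finite over $O_K$ and $\mathfrak{X} \otimes_{O_K} O_{K(x)}$ is a smooth proper model of $X \otimes_K K(x)$; the $K(x)$-rational point $x$ then extends, by the valuative criterion of properness, to a section $\mathrm{Spec}\, O_{K(x)} \to \mathfrak{X} \otimes_{O_K} O_{K(x)}$. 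Now I would apply the standard fact that for a smooth proper scheme over a complete discrete valuation ring equipped with a section, the prime-to-$p$ geometric fundamental group of the generic fiber, endowed via the section with an honest Galois action, is unramified; concretely, $I_{K(x)}$ acts trivially on $\pi_1(X \otimes K(x)^{\mathrm{sep}}, \bar{x})^{p'}$. This step needs no hypothesis on $p$.

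For (B) $\Rightarrow$ (A), assume $p = 0$ or $p > 2g+1$, and fix a sequence $X = X_n \xrightarrow{f_n} \cdots \to X_1 \to \mathrm{Spec}\, K$ of parameterizing morphisms with sections realizing the maximal genus $g$; I induct on $n$. When $n = 1$, $X$ is a curve of genus $\geq 1$ with a rational point $x$ (so $K(x)=K$), and triviality of the $I_K$-action on $\pi_1(X \otimes K^{\mathrm{sep}}, \bar{x})^{p'}$ gives triviality of the induced outer action on the pro-$l$ part for some $l \neq p$, whence good reduction by Oda's criterion if the genus is $\geq 2$ and by N\'eron--Ogg--Shafarevich if it is $1$. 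For $n \geq 2$, write $f_n : X \to X' := X_{n-1}$, a proper smooth family of curves of genus $g' \leq g$ with section $s$. I would first push (B) down to $X'$: for a closed point $y$ of $X'$ put $x := s(y)$, so $K(x)=K(y)$, and use the homotopy exact sequence of the curve fibration $X \otimes K(y) \to X' \otimes K(y)$, which after pro-$p'$ completion yields a $G_{K(y)}$-equivariant exact sequence with middle term $\pi_1(X \otimes K(y)^{\mathrm{sep}}, \bar{x})^{p'}$, quotient $\pi_1(X' \otimes K(y)^{\mathrm{sep}}, \bar{y})^{p'}$, and kernel the prime-to-$p$ geometric fundamental group of the fiber $X_y := f_n^{-1}(y)$, pointed by $s(y)$. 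Triviality of $I_{K(y)}$ on the middle term forces it on the quotient, so $X'$ satisfies (B) and, by the inductive hypothesis (its maximal genus being $\leq g$), has a smooth proper model $\mathfrak{X}'/O_K$, which is automatically regular with irreducible special fiber; and it forces it on the kernel, so by Oda / N\'eron--Ogg--Shafarevich the curve $X_y$ has good reduction over its valuation ring $O_{K(y)}$.

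Thus every fiber of $f_n$ over a closed point of the generic fiber $X' \subset \mathfrak{X}'$ acquires a smooth model, and it remains to globalize this over $\mathfrak{X}'$. Since the moduli stack $\overline{\mathcal{M}}_{g',1}$ of stable $1$-pointed curves of genus $g'$ is proper and $\mathfrak{X}'$ is regular, the classifying morphism of the pointed family $(X,s) \to X'$ extends over $\mathfrak{X}' \setminus Z$ for some closed $Z$ of codimension $\geq 2$, giving a family of stable pointed curves $\mathcal{C}$ there that extends $X$. Because every closed point of the special fiber $\mathfrak{X}'_s$ is a specialization of a closed point of $X'$ (cut out locally by part of a regular system of parameters containing a uniformizer of $O_K$), the good reduction of the $X_y$ together with uniqueness of stable models forces $\mathcal{C}$ to be smooth at the closed points of $\mathfrak{X}'_s \setminus Z$, hence at the generic point of $\mathfrak{X}'_s$; so the non-smooth locus of $\mathcal{C}$ is closed of codimension $\geq 2$ in $\mathfrak{X}'$, and a purity statement for smooth proper families of curves extends $\mathcal{C}$ to a smooth proper curve $\mathfrak{X} \to \mathfrak{X}'$. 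Then $\mathfrak{X} \to \mathfrak{X}' \to \mathrm{Spec}\, O_K$ is smooth and proper, so $X$ has good reduction. I expect the main obstacle to be precisely this last globalization: propagating good reduction of the fibers $X_y$ to the generic point of $\mathfrak{X}'_s$ and establishing the purity statement in residue characteristic $p$, which is where the hypothesis $p = 0$ or $p > 2g+1$ enters, via the tameness (indeed, order prime to $p$) of the automorphism group schemes of genus-$\leq g$ curves that it guarantees; a subsidiary point requiring care is the exactness of the pro-$p'$ homotopy sequence relating $\pi_1$ of $X$, $X'$, and $X_y$, especially in the genus-$1$ case, where one may instead argue through the Jacobian.
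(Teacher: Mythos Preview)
Your setup for both implications and the inductive reduction of (B) $\Rightarrow$ (A) to extending $f_n : X \to X' := X_{n-1}$ over a smooth model $\mathfrak{X}'$ match the paper. The real gap is the step where, from triviality of $I_{K(y)}$ on $\pi_1(X \otimes K(y)^{\mathrm{sep}}, \bar{x})^{p'}$, you deduce triviality on $\pi_1((X_y)_{\overline{K(y)}}, \bar{x})^{p'}$. This needs the natural map $\pi_1((X_y)_{\overline{K(y)}})^{p'} \to \pi_1(X \otimes K(y)^{\mathrm{sep}})^{p'}$ to be injective, and that is not known: even with the section, a characteristic finite $l$-quotient of the fibre group need not arise as the image of the fibre group in a finite $p'$-quotient of the total group, because the monodromy image of $\pi_1(X' \otimes K(y)^{\mathrm{sep}})$ in the automorphism group of that $l$-quotient can have order divisible by $p$. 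You call this a ``subsidiary point,'' but it is the crux, and the paper's main technical input (Theorem~\ref{4hes}, Section~4) is precisely a replacement: a homotopy exact sequence
\[
1 \to \pi_1((X_y)_{\overline{K(y)}})^{l\text{-unip}} \to \pi_1(X \otimes K(y)^{\mathrm{sep}})^{l\text{-rel-unip},2g} \to \pi_1(X' \otimes K(y)^{\mathrm{sep}})^{l\text{-alg},2g} \to 1
\]
of affine $\mathbb{Q}_l$-group schemes, proved directly by Tannakian arguments with smooth $\mathbb{Q}_l$-sheaves. This is also where $p > 2g+1$ actually enters --- not for purity or tameness of automorphism schemes as you suggest, but (Lemma~\ref{6char}) to choose $l \neq p$ with $p \nmid \prod_{h=1}^{2g}(l^h-1)$, which forces every rank-$\leq 2g$ continuous $\mathbb{Q}_l$-representation of the \'etale $\pi_1$ to factor through its pro-$p'$ quotient; only then does condition (B) yield triviality of inertia on the middle term above.

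For the globalization the paper also proceeds differently from your moduli argument. The Tannakian exact sequence lives over the category $\mathrm{Et}_l^{\leq 2g}(X_{n-1})$, so its kernel is an affine group scheme there and defines an ind-smooth $\mathbb{Q}_l$-sheaf $\mathcal{E}$ on $X_{n-1}$. Using the pointwise exact sequences together with (B) and Lemma~\ref{6char}, one checks that the pullback of $\mathcal{E}$ along every $O_K$-morphism $\mathrm{Spec}\, O_L \to \mathfrak{X}_{n-1}$ with $L/K$ finite is unramified; a version of Drinfeld's curve-testing criterion for extending $l$-adic sheaves (Corollary~\ref{5el}) then extends $\mathcal{E}$ across the special fibre. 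Unramifiedness of $\mathcal{E}$ at the generic point $\xi$ of the special fibre gives good reduction of $X_n \times_{X_{n-1}} K(X_{n-1})$ over $O_{\mathfrak{X}_{n-1},\xi}$ via the $l$-unipotent form of Oda's criterion (Proposition~\ref{3gr}), and Moret-Bailly's purity theorem produces the global smooth model $\mathfrak{X}_n \to \mathfrak{X}_{n-1}$. Your moduli-theoretic alternative to this last step may well be workable, but it presupposes the fibrewise good reduction that your pro-$p'$ argument does not deliver.
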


Since the implication (A) $\Rightarrow$ (B) is rather easy, we explain the strategy of the proof of the implication 
(B) $\Rightarrow$ (A)(assuming $p > 2g + 1$). Our proof heavily depends on 
the machinery of Tannakian categories.

For a prime number $l$ different from $p$ 
and a geometrically connected scheme $Y$ over a field $L$, 
let ${\rm Et}_l(Y \otimes L(y)^{\rm sep})$ be 
the category of 
smooth $\mathbb{Q}_{l}$-sheaves on $Y \otimes L(y)^{\rm sep}$, 
which is a Tannakian category over $\mathbb{Q}_l$. Here, $y$ is a closed point of $Y$ and $L(y)$ is the residue field of $y$.
For $r \in \mathbb{N}$, 
we define its Tannakian subcategories 
${\rm Et}^{\leq r}_l(Y \otimes L(y)^{\rm sep})$ 
(resp. $U{\rm Et}_l(Y \otimes L(y)^{\rm sep})$) 
as the minimal one which contains all the 
smooth $\mathbb{Q}_l$-sheaves of rank $\leq r$ 
(resp. the trivial smooth $\mathbb{Q}_l$-sheaf $\mathbb{Q}_l$) and 
which 
is closed under taking subquotients, tensor products, 
duals, and extensions. 
Also, for a geometrically connected morphism $f:Y \rightarrow Z$ of geometrically connected schemes over $L$, 
we define the Tannakian subcategory 
$U_f{\rm Et}^{\leq r}_l(Y \otimes L(y)^{\rm sep})$ of 
 ${\rm Et}_l(Y \otimes L(y)^{\rm sep})$ as the minimal one 
which contains the essential image of 
$f^*: {\rm Et}^{\leq r}_l(Z \otimes L(y)^{\rm sep}) \rightarrow 
{\rm Et}^{\leq r}_l(Y \otimes L(y)^{\rm sep})$ and which 
is closed under taking subquotients, tensor products, 
duals, and extensions. 
We denote the Tannaka dual of 
${\rm Et}^{\leq r}_l(Y \otimes L(y)^{\rm sep})$ 
(resp. $U{\rm Et}_l(Y \otimes L(y)^{\rm sep})$, 
$U_f{\rm Et}^{\leq r}_l(Y \otimes L(y)^{\rm sep})$) 
with respect to the fiber functor defined by 
a geometric point over $y$ by $\pi_1(Y \otimes L(y)^{\rm sep})^{l\text{-alg},r}$ 
(resp. $\pi_1(Y \otimes L(y)^{\rm sep})^{l\text{-unip}}$, 
$\pi_1(Y \otimes L(y)^{\rm sep})^{l\text{-rel-unip},r}$). 
(In the introduction, we omit to write the base point.) 
Note that these group schemes admit actions 
of the absolute Galois group $G_{L(y)}$ of $L(y)$. 

We take a sequence of parametrizing morphisms with sections 
(1) of $X$ whose maximal genus is equal to that of 
$X$, and prove the implication (B) $\Rightarrow$ (A) 
by induction on $n$. So we assume that 
$X_{n-1}$ has a good model $\mathfrak{X}_{n-1} \rightarrow \text{Spec}\,O_K$. 
The key ingredient of the proof is 
the homotopy exact sequence of Tannaka duals 
\begin{equation}\begin{split}
1 \rightarrow 
\pi_1((X \times_{X_{n-1}} x) \otimes K(x)^{\rm sep})^{l\text{-unip}}
& \rightarrow \pi_1(X \otimes K(x)^{\rm sep})^{l\text{-rel-unip},r} \\
& \rightarrow \pi_1(X_{n-1} \otimes K(x)^{\rm sep})^{l\text{-alg},r} \rightarrow 1, 
\end{split}\end{equation} 
where $x$ is any closed point of $X_{n-1}$, which is regarded also as a closed point of $X_{n}$ via the section of $f_{n}:X_{n} \rightarrow X_{n-1}$.  
This is an $l$-adic analogue of the 
homotopy exact sequences of de Rham and rigid fundamental groups 
of Lazda \cite{Laz}. Lazda's proof is motivic in some sense, and so 
his proof works also in our case without so much changes. 

We make a suitable choice of $l$ and prove by using the exact sequence 
(2) that the action of $I_{K(x)}$ on 
$\pi_1((X \times_{X_{n-1}} x) \otimes K(x)^{\rm sep})^{l\text{-unip}}$
is trivial for any closed point $x$ of $X$. 
(Here we use the assumption $p > 2g+1$.) 
On the other hand, 
we see from the relative theory of 
Tannakian category and (2) that 
$\pi_1((X \times_{X_{n-1}} x) \otimes K(x)^{\rm sep})^{l\text{-unip}}$'s 
naturally form a group scheme $\mathcal{E}$ over 
the category of smooth $\mathbb{Q}_l$-sheaves on $X_{n-1}$. 
The triviality of actions of $I_{K(x)}$'s on 
$\pi_1((X \times_{X_{n-1}} x) \otimes K(x)^{\rm sep})^{l\text{-unip}}$'s 
implies that the restriction of $\mathcal{E}$ to each 
$x$ is extendable to 
a group scheme over the category 
of smooth $\mathbb{Q}_l$-sheaves on the 
$O_{K(x)}$-valued point of $\mathcal{X}_{n-1}$ which extends $x$. 
This kind of property and a result of Drinfeld \cite{Dri}
imply that $\mathcal{E}$ is extendable to 
a group scheme over the category 
of smooth $\mathbb{Q}_l$-sheaves on $\mathcal{X}_{n-1}$. 
In particular, $\mathcal{E}$ is unramified at the generic point
$\xi$ of the special fiber of $\mathfrak{X}_{n-1}$. This and 
a variant of Oda's result imply that 
$X_n \rightarrow X_{n-1}$ has 
good reduction at the local ring $O_{\mathfrak{X}_{n-1},\xi}$
at $\xi$, and then a result of 
Moret-Bailly \cite{Mor} implies that the morphism $X_n \rightarrow X_{n-1}$ 
lifts to a smooth morphism $\mathfrak{X}_n \rightarrow \mathfrak{X}_{n-1}$, which 
implies (A).

The content of each section is as follows: 
In Section 2, we give a review and a preliminary result 
on $l$-unipotent envelope of profinite groups which we 
need in this paper. In Section 3, we give a review on 
Oda's good reduction criterion for proper hyperbolic curves 
and prove its variant, which uses 
$l$-unipotent envelope of etale fundamental groups. 
In Section 4, we prove a homotopy exact sequence of 
Tannaka duals of certain categories of 
smooth $\mathbb{Q}_l$-sheaves of the form (2). 
In Section 5, we give a review of Drinfeld's result 
on extension of smooth $\mathbb{Q}_l$-sheaves. We check that 
it is applicable in our situation, because the situation 
of Drinfeld is slightly more restrictive. 
In Section 6, we give a proof of the main theorem, 
using the results proved up to previous sections. 
In Section 7, we give a proof of Oda's good reduction criterion in \cite{Oda1} and \cite{Oda2}, which is not proved for a general discrete valuation field and is stated for general discrete valuation field without proof in \cite{Tama} Remark(5.4).

{\it Acknowledgements:} The author thanks his supervisor Atsushi Shiho for useful discussions and helpful advice.

\section{Review of $l$-unipotent envelope of profinite groups}
In this section, we recall basic facts on $l$-unipotent envelope of profinite groups.

 We start with a review of \cite{Del} \S 9. 
For an abstract group $G$, we denote the lower central series of $G$ as $ \{ \Gamma _{n}G \} _{n\geq 1}$.
We write the profinite (resp. pro-$p'$, resp. pro-$l$) completion of $G$ by $\hat{G}$ (resp. $G^{p'}$, resp. $G^{l}$),
where $l$ is a prime number and $p$ is a prime number or $0$. Here, the pro-$p'$ completion of $G$ is the limit of the projective
system of quotient groups of $G$ which are finite groups of order prime to $p$. Here, we consider that every finite group has order prime to $0$.
We also denote the lower central series of a profinite group $G$ as $ \{ \Gamma _{n}G \} _{n\geq 1} $,
where $\Gamma _{n}G$ is the closure of $\Gamma _{n}G$ (as abstract group) in $G$.
 For an abstract group $G$ and a prime number $l$, the natural morphism from $G$ to its pro-$l$ completion 
$G^{l}$ induces the isomorphism
 \begin{equation}
G^{l}/ \Gamma _{n}(G^{l}) \cong (G/ \Gamma _{n}G)^{l}
\end{equation}
for all $n\geq 1$, since both sides of this isomorphism are the limit of the projective system of quotient groups of $G$ 
which are finite $l$ groups and have nilpotent length $\leq n$.

\bigskip
\begin{prop-dfn}

The embedding functor
\begin{equation}
(\text{uniquely divisible nilpotent groups}) \rightarrow (\text{nilpotent groups})
\end{equation}
has a left adjoint functor (\cite{Bo} I\hspace{-1pt}I \S 4 ex15 and \S ex 6), which we denote by $G \mapsto G_{\mathbb{Q}}$.
We refer to $G_{\mathbb{Q}}$ as the {\it divisible closure} of $G$.
\label{2div}
\end{prop-dfn}

It is known that, when $G$ is a finitely generated torsion free nilpotent group, the adjunction morphism $G \rightarrow G_{\mathbb{Q}}$
is injective.

\bigskip
\begin{dfn}
\begin{enumerate}
\item  The {\it unipotent envelope} of an abstract finitely generated group $G$ is defined 
to be the Tannaka dual of the category of finite dimensional unipotent representations 
of $G$ over $\mathbb{Q}$. This is written by $G^{\mathrm{unip}}$.
\item The {\it $l$-unipotent envelope} of a finitely generated group (resp. a profinite group) $G$ is defined
to be the Tannaka dual of the category of finite dimensional unipotent representations (resp. finite dimensional continuous unipotent 
representations) of $G$ over $\mathbb{Q}_{l}$ .
This is written by $G^{l\text{-unip}}$.
\end{enumerate}
\end{dfn}

Let $N$ be a finitely generated torsion free  nilpotent group. Then it is known that we have the diagram 
\begin{equation}
N \hookrightarrow N_{\mathbb{Q}} \cong N^{\mathrm{unip}}(\mathbb{Q}),
\end{equation}
where the first map is the adjunction morphism defined by Proposition \ref{2div}.

\bigskip
On the other hand, for $N$ as above, the profinite completion $\hat{N}$ of $N$ is known to be isomorphic to the closure
of $N$ in $N^{\mathrm{unip}}(\mathbb{A}_{f}) := \underset{l}{\prod} ' N^{\mathrm{unip}}(\mathbb{Q}_{l})$. 
Since any finite nilpotent group is the product of their $l$-Sylow subgroups, we have
\begin{equation}
\hat{N} \cong \prod_{l} N^{l},
\end{equation}
where $l$ runs over all prime numbers. 
By looking at the $l$-component of the inclusion
\begin{eqnarray}
\prod_{l} N^{l} \cong \hat{N} \hookrightarrow N^{\text{unip}}(\mathbb{A}_{f}) = \underset{l}{\prod} ' N^{\mathrm{unip}}(\mathbb{Q}_{l}),
\end{eqnarray}
we obtain the inclusion
\begin{equation}
N^{l} \hookrightarrow N^{\mathrm{unip}}(\mathbb{Q}_{l}).
\end{equation}

Next, we recall the following fact on $l$-unipotent envelope of profinite groups. For more detailed explanation for $l$-unipotent envelope,
see \cite{Wil}.
\bigskip
\begin{prop} (\cite{Wil} Proposition 2.3)
Let $G$ be a finitely generated group, and $l$ be a prime number. Then, we have the isomorphism
\begin{equation}
G^{l\text{-unip}} \cong G^{\mathrm{unip}} \otimes_{\mathbb{Q}} \mathbb{Q}_{l}.
\end{equation} 
Moreover, since all the unipotent representations of $G$ over $\mathbb{Q}_{l}$ 
factor $G^{l}$, we have
\begin{equation}
(G^{l})^{l\text{-unip}} \overset{\sim}{\longleftarrow} G^{l\text{-unip}} \cong G^{\mathrm{unip}} \otimes_{\mathbb{Q}} \mathbb{Q}_{l}.
\end{equation}
\label{2lu}
\end{prop}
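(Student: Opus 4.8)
The plan is to realise both Tannaka duals explicitly through the completed group algebra of $G$ and then to compare them by a flat base change from $\mathbb{Q}$ to $\mathbb{Q}_l$. First I would fix the forgetful functor as the fiber functor, so that $G^{\mathrm{unip}}$ becomes an affine group scheme over $\mathbb{Q}$ and $G^{l\text{-unip}}$ one over $\mathbb{Q}_l$, both pro-unipotent since every object of the relevant categories is a successive extension of the trivial representation. For a field $F \supseteq \mathbb{Q}$ I would then analyse unipotent $F$-representations of $G$ via the augmentation ideal $I_F \subset F[G]$. The key finiteness input is that $G$ is finitely generated: then $I_F/I_F^2 \cong G^{\mathrm{ab}} \otimes_{\mathbb{Z}} F$ is finite-dimensional, hence so is each $I_F^n/I_F^{n+1}$ (a quotient of $(I_F/I_F^2)^{\otimes n}$), hence $F[G]/I_F^n$ is a finite-dimensional cocommutative Hopf algebra over $F$ for every $n$ (with diagonal $g \mapsto g \otimes g$ and antipode $g \mapsto g^{-1}$, which descend modulo $I_F^n$). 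A finite-dimensional $F$-representation of $G$ is unipotent precisely when it is killed by some $I_F^N$, i.e.\ is a module over $F[G]/I_F^N$; such modules are exactly the comodules over the commutative dual Hopf algebra of $F[G]/I_F^N$, hence over $C_F := \varinjlim_n (F[G]/I_F^n)^{\vee}$. Thus $\mathrm{Spec}\, C_F$ is the Tannaka dual of the unipotent $F$-representations of $G$; in particular $G^{\mathrm{unip}} = \mathrm{Spec}\, C_{\mathbb{Q}}$ and $G^{l\text{-unip}} = \mathrm{Spec}\, C_{\mathbb{Q}_l}$.

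Given this, the first isomorphism drops out: $F[G]/I_F^n = (\mathbb{Q}[G]/I_{\mathbb{Q}}^n) \otimes_{\mathbb{Q}} F$, and since $\mathbb{Q}[G]/I_{\mathbb{Q}}^n$ is finite-dimensional its $\mathbb{Q}$-linear dual commutes with $- \otimes_{\mathbb{Q}} F$; passing to the colimit over $n$ gives $C_F \cong C_{\mathbb{Q}} \otimes_{\mathbb{Q}} F$ as Hopf algebras, so $\mathrm{Spec}\, C_F \cong G^{\mathrm{unip}} \otimes_{\mathbb{Q}} F$, and with $F = \mathbb{Q}_l$ this is $G^{l\text{-unip}} \cong G^{\mathrm{unip}} \otimes_{\mathbb{Q}} \mathbb{Q}_l$.

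For the second isomorphism I would show that pullback along $G \to G^l$ is an equivalence from the category of continuous finite-dimensional unipotent $\mathbb{Q}_l$-representations of $G^l$ to that of $G$; being compatible with the forgetful fiber functors, it then induces the asserted isomorphism $(G^l)^{l\text{-unip}} \overset{\sim}{\longleftarrow} G^{l\text{-unip}}$ on Tannaka duals. Fullness and faithfulness are formal: $G$ is dense in $G^l$ and the representations are continuous, so any $\mathbb{Q}_l$-linear map intertwining the $G$-actions intertwines the $G^l$-actions. The content is essential surjectivity, namely that a unipotent $\rho \colon G \to \mathrm{GL}_n(\mathbb{Q}_l)$ factors continuously through $G^l$. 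For this, after a change of basis $\rho(G)$ consists of strictly upper-triangular unipotent matrices, and since any product of $n$ strictly upper-triangular $n \times n$ matrices vanishes, the $\mathbb{Z}_l$-subalgebra $A \subset M_n(\mathbb{Q}_l)$ generated by the elements $\rho(\gamma) - 1$, for $\gamma$ in a finite generating set of $G$, is spanned over $\mathbb{Z}_l$ by $1$ and by the finitely many products of length $\le n-1$ of these elements, hence is a finitely generated $\mathbb{Z}_l$-module; then $L := A \cdot \mathbb{Z}_l^n$ is a $\rho(G)$-stable lattice. With respect to a $\mathbb{Z}_l$-basis of $L$ adapted to the flag realising unipotence, $\rho(G)$ is contained in the group of strictly upper-triangular unipotent matrices over $\mathbb{Z}_l$, which is pro-$l$; so the closure of $\rho(G)$ there is pro-$l$ and $\rho$ factors through $G \to G^l$ by the universal property of the pro-$l$ completion, the factorization being again unipotent. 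This gives the desired equivalence.

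The step that requires the most care is the first paragraph: setting up the identification of the Tannaka dual with $\mathrm{Spec}\, C_F$ cleanly, and in particular the finiteness $\dim_F F[G]/I_F^n < \infty$ coming from finite generation of $G$ — this is exactly what makes the base change to $\mathbb{Q}_l$ lossless. The remaining steps are formal Tannakian bookkeeping together with the elementary lattice argument. (The statement and this line of argument are due to Wilkes \cite{Wil}; the above merely recalls the proof.)
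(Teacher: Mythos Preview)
The paper does not give its own proof of this proposition; it is quoted from \cite{Wil} (Wildeshaus, \emph{Realizations of polylogarithms}, not ``Wilkes'') without argument. Your approach via the completed group algebra and base change is the standard one and is correct in outline, so there is nothing to compare against in the paper itself.

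There is, however, a genuine slip in your first paragraph. The quotient $F[G]/I_F^n$ is \emph{not} a Hopf algebra for $n\ge 2$: the diagonal $\Delta(g)=g\otimes g$ on $F[G]$ satisfies only $\Delta(I_F^n)\subset\sum_{i+j=n} I_F^{\,i}\otimes I_F^{\,j}$, so it does not descend to a map $F[G]/I_F^n\to F[G]/I_F^n\otimes F[G]/I_F^n$. (For $G=\mathbb{Z}$ and $n=2$ one computes $\Delta((t-1)^2)\equiv 2(t-1)\otimes(t-1)\not\equiv 0$ in $F[G]/I_F^2\otimes F[G]/I_F^2$.) The fix is painless: $\Delta$ \emph{does} induce $F[G]/I_F^{\,m+n}\to F[G]/I_F^{\,m}\otimes F[G]/I_F^{\,n}$, and dualizing these maps and passing to the colimit gives the commutative multiplication on $C_F=\varinjlim_n (F[G]/I_F^n)^{\vee}$; the antipode is handled similarly. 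With this correction your identification $G^{\mathrm{unip}}\otimes_{\mathbb{Q}} F\cong\mathrm{Spec}\,C_F$ and the base-change step go through as written.

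Your argument for the second isomorphism is fine: the nilpotence of $\rho(\gamma)-1$ makes the $\mathbb{Z}_l$-algebra they generate finite over $\mathbb{Z}_l$, hence produces a stable lattice, and intersecting the unipotent flag with that lattice puts $\rho(G)$ inside the pro-$l$ group of upper-triangular unipotent matrices over $\mathbb{Z}_l$, forcing the factorization through $G^l$.
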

\bigskip

Let $\Sigma _{g}$ be the closed surface group of genus $g \geq 2$. Then, by the main theorem of \cite{Lab},
$\Gamma_{n}\Sigma _{g}/\Gamma_{n+1}\Sigma _{g}$ is a free abelian group for all $n\geq 1$.
It implies that $\Sigma _{g}/\Gamma_{n}\Sigma _{g}$ is a finitely generated torsion free nilpotent group.
Therefore, if we denote $(\Sigma _{g} )^{l}$ by $\pi$, we obtain the inclusion
\begin{eqnarray} \begin{split}
\pi / \Gamma _{n} \pi \cong (\Sigma _{g}/\Gamma_{n}\Sigma _{g})^{l} \hookrightarrow
(\Sigma _{g}/\Gamma_{n}\Sigma _{g})^{\mathrm{unip}}(\mathbb{Q}_{l}) 
&\cong ((\Sigma _{g}/\Gamma_{n}\Sigma _{g})^{\mathrm{unip}}
\otimes_{\mathbb{Q}} \mathbb{Q}_{l})(\mathbb{Q}_{l}) \\
& \cong ((\Sigma _{g}/\Gamma_{n}\Sigma _{g})^{l})^{l\text{-unip}}(\mathbb{Q}_{l}) \\
&\cong (\pi / \Gamma _{n} \pi )^{l\text{-unip}}(\mathbb{Q}_{l}).
\end{split}\label{eq:2long}\end{eqnarray}
We will use the inclusion (\ref{eq:2long}) in the next section.

\section{Good reduction criterion for proper hyperbolic curves with sections}

In this section, we recall the good reduction criterion for proper hyperbolic curves
proven by Oda and Tamagawa. Then, we give a modified form of it, when a given hyperbolic curve has a section.

\begin{dfn}
Let $S$ be a scheme and $X$ a scheme over $S$.
\begin{enumerate}
\item We shall say that $X$ is a {\it proper hyperbolic curve} (resp. {\it proper curve}) over $S$ if
the structure morphism $X \rightarrow S$ is smooth, proper, geometrically connected,
and of relative dimension one over $S$, each of whose geometric fiber is of genus $\geq 2$ (resp. $\geq1$). 
\item We shall say that $X$ is a {\it proper hyperbolic curve with a section} (resp.  {\it proper curve with a section}) over $S$ if
$X$ is a proper hyperbolic curve (resp. a proper curve) over $S$, and if the structure morphism has a section.
\end{enumerate}
\bigskip
Let $S$ be the spectrum of a discrete valuation ring $O_K$, $\eta$ the generic point of $S$,
$\sigma$ the closed point of $S$, $K = \kappa (\eta)$ the fractional field of $O_K$, $k = \kappa (\sigma)$ 
the residue field of $O_K$, and $p$ the characteristic of $k$.
\label{eq:3def}\end{dfn}

\begin{dfn}
Let $X \rightarrow \mathrm{Spec}\,K$ be a proper smooth morphism of schemes.
We say that $X$ has good reduction if there exists a proper smooth $S$-scheme $\mathfrak{X}$ 
whose generic fiber $\mathfrak{X} _{\eta}$ is isomorphic to $X$ over $K$. 
We refer to $\mathfrak{X}$ as a {\it smooth model} of $X$.
\bigskip
\end{dfn}

Let $X \rightarrow \mathrm{Spec}\,K$ be a proper hyperbolic curve.
Take a geometric point $ \overline{t} $ of $X \otimes K^{\mathrm{sep}}$. Then we have the exact sequence of profinite groups
\begin{eqnarray}
1 \rightarrow \pi _1 (X \otimes K^{\mathrm{sep}} ,\overline{t} ) \rightarrow
 \pi _1 (X,\overline{t} ) \rightarrow G_K \rightarrow 1.
\label{3fhes}
\end{eqnarray} 

This exact sequence yields the outer Galois action
\begin{eqnarray}
G_{K} \rightarrow \mathrm{Out}(\pi _1 (X \otimes K^{\mathrm{sep}},\overline{t})),
\end{eqnarray}
where, for a topological group $G$, $\mathrm{Out}(G)$ means the quotient group of the group $\mathrm{Aut}(G)$ of continuous group automorphisms of 
$G$ divided by the group $\mathrm{Inn}(G)$ of inner automorphisms of $G$.
Then, we have natural homomorphisms
\begin{eqnarray} \begin{split}
I_{K} \rightarrow G_{K} &\rightarrow \mathrm{Out}(\pi _1 (X \otimes K^{\mathrm{sep}},\overline{t})) \\
&\rightarrow \mathrm{Out}(\pi _1 (X \otimes K^{\mathrm{sep}},\overline{t})^{p'})
\rightarrow \mathrm{Out}(\pi _1 (X \otimes K^{\mathrm{sep}},\overline{t})^{l})
\end{split}\label{eq:3oa} \end{eqnarray}
for any prime number $l \neq p$.

\bigskip
Oda and Tamagawa gave the following criterion.

\begin{prop} (\cite{Oda1}\cite{Oda2}\cite{Tama} section 5)
The following are equivalent.
\begin{enumerate}
\item $X$ has good reduction.
\item The outer action $I_K \rightarrow \mathrm{Out}( \pi _1 (X \otimes K^{\mathrm{sep}} ,\overline{t}) ^{p'})$
defined by (\ref{eq:3oa}) is trivial
\item There exists a prime number $l \neq p$ such that the outer action
$I_K \rightarrow \mathrm{Out}( \pi _1 (X \otimes K^{\mathrm{sep}} ,\overline{t}) ^{l})$ defined by (\ref{eq:3oa}) is trivial.
\item  There exists a prime number $l \neq p$ such that the outer action of $I_K$ on 
$\pi _1 (X \otimes K^{\mathrm{sep}},\overline{t}) ^{l}/ \Gamma _{n}
\pi _1 (X \otimes K^{\mathrm{sep}},\overline{t}) ^{l}$ induced by (\ref{eq:3oa})
is trivial for all natural numbers $n$.
\end{enumerate}
\label{3odatama}
\end{prop}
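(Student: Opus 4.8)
\emph{Plan of proof.} I would establish the implications in the cycle $(1)\Rightarrow(2)\Rightarrow(3)\Rightarrow(4)\Rightarrow(1)$. The implications $(2)\Rightarrow(3)$ and $(3)\Rightarrow(4)$ are formal, since the canonical surjections $\pi_1(X\otimes K^{\mathrm{sep}},\overline t)^{p'}\twoheadrightarrow\pi_1(X\otimes K^{\mathrm{sep}},\overline t)^{l}\twoheadrightarrow\pi_1(X\otimes K^{\mathrm{sep}},\overline t)^{l}/\Gamma_n$ are compatible with the outer $G_K$-actions. For $(1)\Rightarrow(2)$ I would invoke specialization of fundamental groups: replacing $O_K$ by its strict henselization $O_K^{\mathrm{sh}}$ (which does not alter condition $(2)$, as $I_K$ is already $\mathrm{Gal}(K^{\mathrm{sep}}/K^{\mathrm{ur}})$), a smooth proper model $\mathfrak X/O_K$ yields, by SGA~1 (Exp.~X, XIII), a specialization isomorphism $\pi_1(X\otimes K^{\mathrm{sep}})^{p'}\xrightarrow{\ \sim\ }\pi_1(\mathfrak X_\sigma)^{p'}$ that is equivariant for the natural $G_{K^{\mathrm{ur}}}$-actions; but the target carries only the action coming from $\pi_1$ of the separably closed residue field, i.e.\ the trivial action, so $I_K=G_{K^{\mathrm{ur}}}$ acts trivially, a fortiori outer-trivially.

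The substantive implication is $(4)\Rightarrow(1)$. Condition $(4)$ with $n=2$ says $I_K$ acts trivially on $H_1(X\otimes K^{\mathrm{sep}},\mathbb Z_l)$ (an abelian group, so outer acting is honest acting), so by the N\'eron--Ogg--Shafarevich criterion $\mathrm{Jac}(X)$ has good reduction over $O_K$, whence by a theorem of Deligne--Mumford $X$ has semistable reduction over $O_K$; after base change to $\widehat{O_K^{\mathrm{sh}}}$ (which, as above, leaves $(4)$ unchanged) I obtain a semistable model $\mathfrak Y$ over a complete strictly henselian base with algebraically closed residue field, special fibre $Y$, and dual graph $\Gamma$. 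The specialization map $\pi_1(X\otimes K^{\mathrm{sep}})^{l}\twoheadrightarrow\pi_1(Y_{\overline k})^{l}$ is surjective with kernel normally generated by the vanishing cycles $\{\delta_e\}_{e\in E(\Gamma)}$ (a disjoint family of essential loops, one per node of $Y$), and by Picard--Lefschetz theory (SGA~7) a generator of the pro-$l$ part of the tame inertia acts on $\pi_1^{l}$ as the multitwist $\prod_e T_{\delta_e}^{m_e}$ with all $m_e>0$. If $Y$ were singular, this would be a nontrivial automorphism of $\pi_1^{l}$ — the twist along $\delta_e$ modifies loops meeting $\delta_e$ by insertion of $\delta_e^{\pm m_e}$, nontrivial as $\delta_e\neq1$ in $\pi_1^{l}$ — hence, since $\bigcap_n\Gamma_n\pi_1^{l}=1$ (the pro-$l$ surface group is the inverse limit of its nilpotent quotients, by the identification recalled in \S2), it would be nontrivial already on some $\pi_1^{l}/\Gamma_n$; and it would be \emph{non-inner} there, because inner automorphisms act trivially on every graded quotient $\Gamma_m/\Gamma_{m+1}$ while the lowest-degree Johnson obstruction of a nonzero multitwist (a homomorphism $H_1\to\Gamma_m/\Gamma_{m+1}$) is not a Lie bracket. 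This contradicts $(4)$, so $Y$ is smooth, $\mathfrak Y$ is a smooth proper model, and — after descending the model back to $O_K$ via Artin approximation and effectivity of descent along the finite \'etale extension used — $X$ has good reduction.

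The step I expect to be the main obstacle is certifying that the vanishing-cycle multitwist remains nontrivial in $\mathrm{Out}$ of a suitable finite nilpotent quotient of $\pi_1^{l}$ — this is, in essence, the content of Oda's theorem and the reason the pro-$l$ outer Galois representation detects good reduction whereas $H^1$ alone does not. For a \emph{non-separating} vanishing cycle the twist is already a nontrivial transvection on $H_1=\pi_1^{l}/\Gamma_2$, so $n=2$ suffices; but a tree-shaped dual graph contributes only \emph{separating} vanishing cycles, whose twists are invisible on $H_1$ and even on $\pi_1^{l}/\Gamma_3$, and one must pass to $\pi_1^{l}/\Gamma_4$ and use Morita's computation of the second Johnson homomorphism (the Casson-invariant refinement of the Torelli group), taking care that the contributions of distinct separating nodes cannot cancel because all twist exponents $m_e$ share the same sign. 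A secondary, more bookkeeping-type difficulty — which is precisely the gap this paper sets out to fill (cf.\ \cite{Tama}, Remark~(5.4), and \S7) — is to justify the reductions to a complete strictly henselian base, and the descent of the resulting model back to the original $O_K$, for an \emph{arbitrary} discrete valuation field, with no perfectness, completeness, or excellence hypothesis on $O_K$.
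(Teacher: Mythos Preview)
Your cycle of implications and your reduction (for $(4)\Rightarrow(1)$) to the situation ``$\mathrm{Jac}(X)$ has good reduction but the stable model $\mathfrak Y$ has a singular special fibre, hence all vanishing cycles are separating and one must detect the monodromy in $\pi_1^{\,l}/\Gamma_4$'' all match the paper. Where you diverge is in how you propose to actually see the nontriviality at that level.

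The paper does \emph{not} carry out a Picard--Lefschetz/Johnson computation over the given $O_K$. Its \S7 instead performs a moduli-theoretic lift: using the classifying map $\mathrm{Spec}\,O_K\to\mathscr M_g$ it builds a two-parameter regular local ring $A$ (a slice of $\mathscr M_g\times\mathrm{Spec}\,R$ with $R=O_K[[t]]$ or $W(k)[[t]]$) carrying a stable curve restricting to $\mathfrak Y$, then uses Abhyankar's lemma to identify the pro-$p'$ tame inertia of $K$ with that of the fraction field $L$ of $A^{\mathrm{sh}}_{(t)}$, whose \emph{residue characteristic is $0$}. The monodromy question is thereby transported to a discrete valuation ring of residue characteristic $0$, where Oda's transcendental argument in \cite{Oda2} applies verbatim. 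So the paper's content is a reduction-to-characteristic-zero device, not the ``descent/base-change bookkeeping'' you describe as the secondary difficulty.

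Your direct route has a real gap at the point where you invoke SGA~7: Picard--Lefschetz in SGA~7 describes the inertia action on $H^1$, not on $\pi_1^{\,l}$. The assertion that tame inertia acts on $\pi_1^{\,l}$ by the Dehn multitwist $\prod_e T_{\delta_e}^{m_e}$ is exactly what one extracts from the topological picture over $\mathbb C$; establishing it over an arbitrary $O_K$ (possibly of positive or mixed characteristic) requires either an independent algebraic argument or, as the paper does, a comparison with a characteristic-$0$ situation. Without that input, neither your appeal to Morita's second Johnson homomorphism nor the positivity-of-$m_e$ non-cancellation argument gets off the ground. Conversely, once one grants such a comparison, your outline is essentially Oda's original proof, and the descent worry you flag is unnecessary: the stable model already lives over $O_K$, and smoothness of its special fibre can be checked after passing to $\widehat{O_K^{\mathrm{sh}}}$.
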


In fact, this proposition is proved in \cite{Oda1} and \cite{Oda2} when the residue field of $K$ is of  characteristic $0$ and$K$ is a number field or a completion of a number field. In \cite{Tama} Remark 5.4, this proposition is stated for all discrete valuation field $K$ without proof.  Since, at the time of writing, a proof of this proposition seems to be not available in published form, we give a proof in section 7.  

Assume that the scheme $X$ is a proper curve over $\text{Spec}\,K$ and has a section $s : \text{Spec}\,K \rightarrow X$, and take a geometric point $\overline{s}$ over $s$.
Since we have the natural morphism from $\overline{s}$ to $X \otimes K^{\mathrm{sep}}$, we have the 
homotopy exact sequence \eqref{3fhes} with respect to the base point $\overline{s}$.
The section $s$ gives a section of the map $ \pi _1 (X,\overline{s}) \rightarrow G_K $ in the homotopy exact sequence.
This induces a homomorphism $G_K \rightarrow \mathrm{Aut}( \pi _1 (X \otimes K^{\mathrm{sep}} ,\overline{s}))$,
whose composition to $\mathrm{Out}(\pi _1 (X \otimes K^{\mathrm{sep}},\overline{s}))$ is same as the above homomorphism
$G_{K} \rightarrow \mathrm{Out}(\pi _1 (X \otimes K^{\mathrm{sep}},\overline{s}))$ in \eqref{eq:3oa}.
Therefore, for a prime number $l \neq p$, we obtain the following morphisms
\begin{equation}\begin{split}
I_{K} \hookrightarrow G_{K} &\rightarrow \mathrm{Aut}(\pi _1 (X \otimes K^{\mathrm{sep}} ,\overline{s})) \\
&\rightarrow \mathrm{Aut}(\pi _1 (X \otimes K^{\mathrm{sep}},\overline{s})^{p'}) \\
&\rightarrow \mathrm{Aut}(\pi _1 (X \otimes K^{\mathrm{sep}},\overline{s})^{l})
\rightarrow \mathrm{Aut}((\pi _1 (X \otimes K^{\mathrm{sep}},\overline{s})^{l})^{l\text{-unip}})
\end{split}\label{3doa}\end{equation}
by universal property of $l$-unipotent envelope. Here, the composition 
$\mathrm{Aut}(\pi _1 (X \otimes K^{\mathrm{sep}} ,\overline{s})) \rightarrow 
\mathrm{Aut}((\pi _1 (X \otimes K^{\mathrm{sep}},\overline{s})^{l})^{l\text{-unip}})$
can be identified with the morphism
\begin{equation}
\mathrm{Aut}(\pi _1(X \otimes K^{\mathrm{sep}} ,\overline{s}))
\rightarrow \mathrm{Aut}(\pi _1 (X \otimes K^{\mathrm{sep}},\overline{s})^{l\text{-unip}}),
\label{3cdoa}\end{equation}
which is also induced by universal property of $l$-unipotent envelope, via the isomorphism in Proposition \ref{2lu}.

\begin{prop}
The following are equivalent.
\begin{enumerate}
\item $X$ has good reduction.
\item The action of $I_K$ on $\pi _1 (X \otimes K^{\mathrm{sep}} ,\overline{s} )^{p'}$ defined by (\ref{3doa}) is trivial.
\item The action of $I_K$ on $\pi _1 (X \otimes K^{\mathrm{sep}} ,\overline{s} )^{l\text{-unip}}$ defined by (\ref{3doa}) and (\ref{3cdoa})
is trivial.
\end{enumerate}
\label{3gr}
\end{prop}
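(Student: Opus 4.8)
The plan is to prove the cycle of implications $(1)\Rightarrow(2)\Rightarrow(3)\Rightarrow(1)$, and to reduce the last one to the Oda--Tamagawa criterion (Proposition \ref{3odatama}) when the genus $g$ of $X$ is $\geq 2$ and to the N\'eron--Ogg--Shafarevich criterion (cf. \cite{ST}) when $g=1$. Throughout I write $\pi_1^{p'}$, $\pi_1^{l}$, $\pi_1^{l\text{-unip}}$ for $\pi_1(X\otimes K^{\mathrm{sep}},\overline s)^{p'}$, etc.

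Two of the three implications are formal. For $(2)\Rightarrow(3)$: by construction the homomorphism $I_K\to\mathrm{Aut}(\pi_1^{l\text{-unip}})$ in \eqref{3doa}, \eqref{3cdoa} factors through $\mathrm{Aut}(\pi_1^{p'})$ (here one invokes Proposition \ref{2lu} to identify the $l$-unipotent envelope of $\pi_1^{l}$ with $\pi_1^{l\text{-unip}}$), so triviality of the action on $\pi_1^{p'}$ forces triviality on $\pi_1^{l\text{-unip}}$. For $(1)\Rightarrow(2)$: given a smooth model $\mathfrak X\to S$, the valuative criterion of properness extends $s$ to a section $\widetilde s$ of $\mathfrak X\to S$; after base change to the strict henselization $O_K^{\mathrm{sh}}$ (with fraction field $K^{\mathrm{ur}}$, so $\mathrm{Gal}(K^{\mathrm{sep}}/K^{\mathrm{ur}})=I_K$), and with a geometric point above $\widetilde s$ lying over the generic fibre, the section $s$ induces the $I_K$-action of \eqref{3doa} by conjugation inside $\pi_1(X\otimes K^{\mathrm{ur}},\overline s)$, while the composite $\pi_1(\mathrm{Spec}\,K^{\mathrm{ur}})\to\pi_1(\mathrm{Spec}\,O_K^{\mathrm{sh}})\to\pi_1(\mathfrak X\otimes O_K^{\mathrm{sh}})$ is trivial since $\pi_1(\mathrm{Spec}\,O_K^{\mathrm{sh}})=1$. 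Hence the image of $I_K$ in $\pi_1(\mathfrak X\otimes O_K^{\mathrm{sh}})$ is trivial, so conjugation by $s_*(\gamma)$, $\gamma\in I_K$, acts as the identity on the image of $\pi_1(X\otimes K^{\mathrm{sep}},\overline s)$ there. As the specialization map identifies $\pi_1^{p'}$ with $\pi_1(\mathfrak X\otimes O_K^{\mathrm{sh}})^{p'}$ (the prime-to-$p$ part of the fundamental group of a proper smooth curve over a henselian discrete valuation ring, which coincides with that of its special fibre), it follows that $I_K$ acts trivially on $\pi_1^{p'}$, i.e. $(2)$ holds.

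For $(3)\Rightarrow(1)$, suppose first $g\geq 2$. Then $\pi_1^{l}\cong(\Sigma_g)^{l}$, and passing to the inverse limit over $n$ in \eqref{eq:2long} yields an $I_K$-equivariant injection $\pi_1^{l}\hookrightarrow\pi_1^{l\text{-unip}}(\mathbb Q_l)$. Therefore $(3)$ forces the $I_K$-action on $\pi_1^{l}$, hence the outer action, to be trivial, and Proposition \ref{3odatama} gives $(1)$. If $g=1$, then $(X,s)$ is an elliptic curve $E$ over $K$ and $\pi_1^{l}\cong T_lE\cong(\mathbb Z^{2})^{l}$, which embeds $I_K$-equivariantly in $(\mathbb Z^{2})^{\mathrm{unip}}(\mathbb Q_l)\cong\pi_1^{l\text{-unip}}(\mathbb Q_l)$ by Proposition \ref{2lu} (the genus-one analogue of \eqref{eq:2long}); so $(3)$ forces $I_K$ to act trivially on $T_lE$, and the N\'eron--Ogg--Shafarevich criterion shows $E$ has good reduction, the resulting abelian scheme over $O_K$ being a smooth model of $X$.

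The main obstacle is the implication $(1)\Rightarrow(2)$. Triviality of the \emph{outer} $I_K$-action is exactly the content of Oda--Tamagawa, but promoting it to triviality of the \emph{honest} $I_K$-action attached to the section $s$ requires producing the extension $\widetilde s$ over $O_K$ and a careful tracking of base points, so that $\widetilde s$ kills the monodromy upon composition with the specialization homomorphism; the injectivity of the specialization map on pro-$p'$ fundamental groups of curves is what makes this vanishing descend back to $\pi_1^{p'}$. The genus-one case is routine once the $l$-adic, $l$-unipotent, and pro-$p'$ pictures are matched up via Section 2.
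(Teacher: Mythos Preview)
Your proof is correct and follows essentially the same route as the paper: the same specialization argument via the extended section for $(1)\Rightarrow(2)$, the same formal factorization for $(2)\Rightarrow(3)$, and the same reduction to Proposition~\ref{3odatama} (resp.\ N\'eron--Ogg--Shafarevich) for $(3)\Rightarrow(1)$ via the injection~\eqref{eq:2long}. The only minor variation is that in $(3)\Rightarrow(1)$ you pass to the inverse limit of~\eqref{eq:2long} to obtain a single embedding $\pi_1^{l}\hookrightarrow\pi_1^{l\text{-unip}}(\mathbb{Q}_l)$ and invoke criterion~3 of Proposition~\ref{3odatama}, whereas the paper works level by level and invokes criterion~4; your version tacitly uses that $\Sigma_g^{l}$ is residually nilpotent, which is standard but not stated in Section~2.
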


\begin{proof}
Assume that $X$ has good reduction, and let $\mathfrak{X}$ be a smooth model of $X$.
Fix a separable closure $k^{\mathrm{sep}}$ of $k$, the henselization $O^{\mathrm{h}}_{K}$,
and  the strict henselization $O^{\mathrm{sh}}_{K}$
of $O_{K}$ relative to $\mathrm{Spec}\,k^{\mathrm{sep}} \rightarrow \mathrm{Spec}\,k$.
Let $K^{\mathrm{sep}}$ be a separable closure of the fraction field of $O^{\mathrm{sh}}_{K}$. 
Then we have the following diagram.
\[
\xymatrix{
X \otimes _{K}K^{\mathrm{sep}} \ar[d] \ar[r] & X\otimes _{K}(\mathrm{Frac}\,O^{\mathrm{h}}_{K}) \ar[d] \ar[r] 
& \mathrm{Spec}\,(\mathrm{Frac}\,O^{\mathrm{h}}_{K}) \ar[d] \ar@<2.mm>[l] & \mathrm{Spec}\,K^{\mathrm{sep}} \ar[l] \ar[d] \\
\mathfrak{X}\otimes _{O_{K}}O^{\mathrm{sh}}_{K} \ar[r] & \mathfrak{X}\otimes _{O_{K}} O^{\mathrm{h}}_{K} \ar[r]
& \mathrm{Spec}\,O^{\mathrm{h}}_{K} \ar@<2.mm>@{.>}[l]
& \mathrm{Spec}\,O^{\mathrm{sh}}_{K} \ar[l]
}
\]

Since the morphism $\mathfrak{X}\otimes _{O_{K}} O^{\mathrm{h}}_{K} \rightarrow \mathrm{Spec}\,O^{\mathrm{h}}_{K}$ is proper, the unique section $s'$ of this morphism is induced by valuative criterion,
which is compatible with vertical arrows of the above diagram and the base change of the section $s$ by the morphism
$\text{Spec}\,(\mathrm{Frac}\,O^{\mathrm{h}}_{K}) \rightarrow \mathrm{Spec}\,K$.

Consider the etale fundamental groups of the schemes in the above diagram with the geometric points from the scheme $\text{Spec}\,K^{\mathrm{sep}}$ (denoted by $\bar{\eta}$).
Then, we have the following commutative diagram of homotopy exact sequences of 
profinite groups
\[
\xymatrix{ 
1 \ar[r] & \pi _{1}(X\otimes _{K} K^{\mathrm{sep}},\bar{\eta})\ar[d] \ar[r] 
& \pi _{1}(X\otimes _{K}(\mathrm{Frac}\,O^{\mathrm{h}}_{K}),\bar{\eta}) \ar[d] \ar[r]
& G_{\mathrm{Frac}\,O^{\mathrm{h}}_{K}} \ar[d] \ar@<2.mm>[l] \ar[r] & 1\\
1 \ar[r] & \pi _{1}(\mathfrak{X}\otimes _{O_{K}}O^{\mathrm{sh}}_{K},\bar{\eta}) \ar[r]
& \pi _{1}(\mathfrak{X}\otimes _{O_{K}} O^{\mathrm{h}}_{K},\bar{\eta}) \ar[r]
& \pi _{1}(\mathrm{Spec}\,O^{\mathrm{h}}_{K},\bar{\eta}) \ar@<2.mm>@{.>}[l] \ar[r] & 1.
}
\]
It holds that the first row is an exact sequence by \cite{SGA1} I\hspace{-1pt}X Theorem 6.1, and using the same argument in the proof of \cite{SGA1} I\hspace{-1pt}X Theorem 6.1, we can show that the second row is an exact sequence.
This diagram induces the commutative diagram of exact sequences of 
profinite groups
\[
\xymatrix{ 
1 \ar[r] & \pi _{1}(X\otimes _{K} K^{\mathrm{sep}},\bar{\eta})^{p'} \ar[d] \ar[r] 
& \pi _{1}(X\otimes _{K}(\mathrm{Frac}\,O^{\mathrm{h}}_{K}),\bar{\eta})^{(p')} \ar[d] \ar[r]
& G_{\mathrm{Frac}\,O^{\mathrm{h}}_{K}} \ar[d] \ar@<2.mm>[l] \ar[r] & 1\\
1 \ar[r] & \pi _{1}(\mathfrak{X}\otimes _{O_{K}}O^{\mathrm{sh}}_{K},\bar{\eta})^{p'} \ar[r]
& \pi _{1}(\mathfrak{X}\otimes _{O_{K}} O^{\mathrm{h}}_{K},\bar{\eta})^{(p')} \ar[r]
& \pi _{1}(\mathrm{Spec}\,O^{\mathrm{h}}_{K},\bar{\eta}) \ar@<2.mm>@{.>}[l] \ar[r] & 1.
}
\]
Here, the profinite group $\pi _{1}(X\otimes _{K}(\mathrm{Frac}\,O^{\mathrm{h}}_{K}),\bar{\eta})^{(p')}$ is the quotinet group $\pi _{1}(X\otimes _{K}(\mathrm{Frac}\,O^{\mathrm{h}}_{K}),\bar{\eta})/
\text{Ker}(\pi _{1}(X\otimes _{K} K^{\mathrm{sep}},\bar{\eta}) \rightarrow
\pi _{1}(X\otimes _{K} K^{\mathrm{sep}},\bar{\eta})^{p'}),$
and the profinite group $\pi _{1}(\mathfrak{X}\otimes _{O_{K}} O^{\mathrm{h}}_{K},\bar{\eta})^{(p')}$
is the quotient group $\pi _{1}(\mathfrak{X}\otimes _{O_{K}} O^{\mathrm{h}}_{K},\bar{\eta})/
\text{Ker}(\pi _{1}(\mathfrak{X}\otimes _{O_{K}}O^{\mathrm{sh}}_{K},\bar{\eta}) \rightarrow
\pi _{1}(\mathfrak{X}\otimes _{O_{K}}O^{\mathrm{sh}}_{K},\bar{\eta})^{p'}).$

Since the left vertical arrow is an isomorphism
and the action of $I_K$ on $\pi _{1}(\mathfrak{X}\otimes _{O_{K}}O^{\mathrm{sh}}_{K},\bar{\eta})^{p'}$
is trivial by the above diagram, the action of $I_K$ on
$\pi _{1}(X\otimes _{K} K^{\mathrm{sep}},\bar{\eta})^{p'}$
is also trivial.

\bigskip
 Assume that the action of $I_K$ on $\pi _1 (X \otimes K^{\mathrm{sep}},\overline{s})^{p'}$ is trivial.
Then, the action of $I_K$ on $\pi _1 (X \otimes K^{\mathrm{sep}},\overline{s})^{l\text{-unip}}$ is trivial by (\ref{3doa}) and (\ref{3cdoa}).

\bigskip
 Assume that the action of $I_K$ on $\pi _1 (X \otimes K^{\mathrm{sep}},\overline{s})^{l\mathrm{-unip}}$ is trivial.
By Proposition \ref{3odatama} or N\'eron-Ogg-Shafarevich criterion, it is sufficient to show that the action of $I_K$ on $\pi _1 (X \otimes K^{\mathrm{sep}},\overline{s} ) ^{l} / \Gamma _{n}
\pi _1 (X \otimes K^{\mathrm{sep}} ,\overline{s} ) ^{l}$ 
is trivial for all natural number $n$ in order to prove that $X$ has good reduction.

The action of $I_K$ on $\pi _1 (X \otimes K^{\mathrm{sep}} ,\overline{s})^{l\text{-unip}}$ is trivial, and we have 
the surjective morphism of affine group schemes

\begin{equation}
\pi _1 (X \otimes K^{\mathrm{sep}} ,\overline{s})^{l\text{-unip}} \twoheadrightarrow 
(\pi _1 (X \otimes K^{\mathrm{sep}} ,\overline{s}) / \Gamma _{n}
\pi _1 (X \otimes K^{\mathrm{sep}} ,\overline{s}) )^{l\text{-unip}}
\end{equation}
over $\mathbb{Q} _{l}$.
It follows that the homomorphism of their affine rings is injective, so the action of 
$I_K$ on 
$(\pi _1 (X \otimes K^{\mathrm{sep}} ,\overline{s}) / \Gamma _{n}
\pi _1 (X \otimes K^{\mathrm{sep}} ,\overline{s} )  )^{l\mathrm{-unip}}$
is trivial. 

 We have a natural injection (\ref{eq:2long}) in the previous section, by which the action of $I_K$ 
on $\pi _1 (X \otimes K^{\mathrm{sep}},\overline{s}) ^{l} / \Gamma _{n}
\pi _1 (X \otimes K^{\mathrm{sep}},\overline{s}) ^{l}$ 
is trivial for all natural number $n$. Therefore, $X$ has good reduction by Proposition \ref{3odatama}.
\end{proof}

\begin{rem}
In the above proposition, we only used the hypothesis that $X \rightarrow \text{Spec}\,K$ is proper, smooth, geometrically connected
and has a rational point, to prove $1 \Rightarrow 2$. In particular, we can show the following claim.

\bigskip
\noindent \textbf{Claim.}
Let $X$ be a proper smooth $K$-scheme with geometrically connected fibers, and $x$ be a closed point of $X$. Consider $K(x)$-scheme $X \otimes_{K} K(x)$
and the associated Galois action $I_{K(x)} \rightarrow \text{Aut}(\pi _1 ((X \otimes_{K} K(x)) \otimes_{K(x)} K(x)^{\mathrm{sep}} ,\overline{x})^{p'})$. Here, $\overline{x}$ is a geometric point over $\text{Spec}K(x)$.
If $X$ has good reduction, this action is trivial.

\label{3hgr}
\end{rem}

\section{Homotopy exact sequence of affine group schemes}

 In this section, we prove the existence of the homotopy exact sequence 
of affine group schemes which is similar to \cite{Wil} Corollary 3.2.
Wildeshaus showed it in the case of characteristic zero by using transcendental method, but we give an algebraic proof which works
also in positive characteristic case.
This exact sequence of affine group schemes plays a crucial role to prove the main theorem in this paper.
We obtain this exact sequence by applying the argument in \cite{Laz} 1.2  
to smooth $\mathbb{Q}_{l}$-sheaves instead of regular integrable connections.

\begin{dfn}
Let $r$ be a positive integer.
\begin{enumerate} 
\item Let $X$ be a connected Noetherian scheme and $l$ be a prime number invertible on $X$.
We denote the category of smooth $\mathbb{Q}_{l}$-sheaves on $X$ by $\text{Et}_{l}(X)$, which is a
Tannakian category over $\mathbb{Q}_{l}$.
Then, we define the its Tannakian subcategory $\text{Et}_{l}^{\leq r}(X)$ (resp. $U\text{Et}_{l}(X)$)
as the minimal one which contains all the smooth $\mathbb{Q}_{l}$-sheaves
of rank $\leq r$ (resp. the trivial smooth $\mathbb{Q}_{l}$-sheaf $\mathbb{Q}_{l}$) and which is closed under taking subquotients,
tensor products, duals, and extensions.
\item Let $f:X \rightarrow S$ be a proper smooth morphism between connected Noetherian schemes  and $l$ be a prime number invertible on $S$.
We define the Tannakian subcategory $U_{f}Et_{l}^{\leq r}(X)$ of $\text{Et}_{l}^{\leq r}(X)$ as the minimal one which contains the essential
image of $f^{*} : \text{Et}_{l}^{\leq r}(S) \rightarrow \text{Et}_{l}^{\leq r}(X)$ and which is closed under taking subquotients,
tensor products, duals, and extensions.
\item Let $f:X \rightarrow S$ be a proper smooth morphism between connected Noetherian schemes, $l$ be a prime number invertible on $S$,
and $s \rightarrow X$ be a geometric point.
We write the Tannaka dual of $\text{Et}_{l}^{\leq r}(X)$, (resp. $U\text{Et}_{l}(X)$, $U_{f}\text{Et}_{l}^{\leq r}(X)$)
with respect to the fiber functor defined by $s$ as $\pi _{1}(X,s)^{l\text{-alg},r}$
(resp. $\pi _{1}(X,s)^{l\text{-unip}}$, $\pi _{1}(X,s)^{l\text{-rel-unip},r}$).
\end{enumerate}
\label{3tannaka}
\end{dfn}

When $X$ is a proper smooth variety over a separably closed field, the category $U\text{Et}_{l}(X)$ is the same as $U_{f}\text{Et}_{l}^{\leq r}(X)$, where $f$ is the structure morphism. Thus, in this case the category $U\text{Et}_{l}(X)$ is a special case of the category $U_{f}\text{Et}_{l}^{\leq r}(X)$.

Let us recall some notions in the theory of Tannakian category.
We will denote the fundamental group of a Tannakian category $\mathcal{T}$ over a field $k$ by $\pi (\mathcal{T})$ (see [Del]\S 6). This is
an affine group scheme over $\mathcal{T}$, that is, a group object in the opposite category of 
the category of rings of $\text{Ind} \mathcal{T}$. Moreover, for $f:X \rightarrow S$ and $s \rightarrow X$ as in Definition \ref{3tannaka},
$s^{*}\pi (\text{Et}_{l}^{\leq r}(X)) \cong \pi _{1}(X,s)^{l\text{-alg},r}$
(resp. $s^{*}\pi (U\text{Et}_{l}(X)) \cong \pi _{1}(X,s)^{l\text{-unip},r}$,
 $s^{*}\pi (U_{f}\text{Et}_{l}^{\leq r}(X)) \cong \pi _{1}(X,s)^{l\text{-rel-unip},r}$).
\bigskip

Let $f:X \rightarrow S$ be a proper, smooth, and geometrically connected morphism with section $p$ between connected Noetherian schemes.
We fix a geometric point $s \rightarrow S$, and let $X_{s}$ be the base change of $X$ by $s \rightarrow S$.
We write the projection $X_{s} \rightarrow X$ by $i_{s}$, and the base change of $f$ and $p$ by $s \rightarrow S$ as $f'$ and $s'$.

We have functors of Tannakian categories
\begin{equation}
\text{Et}_{l}^{\leq r}(S) \overset{f^{*}}{\underset{p^{*}}{\rightleftarrows}} U_{f}\text{Et}_{l}^{\leq r}(X)
\overset{i_{s}^{*}}{\rightarrow} U\text{Et}_{l}(X_{s}),
\end{equation}
which induce homomorphisms
\begin{equation}
\pi _{1}(X_{s},s)^{l\text{-unip}} \overset{i_{s*}}{\rightarrow}
\pi _{1}(X,s)^{l\text{-rel-unip},r} \overset{f_{*}}{\underset{p_{*}}{\rightleftarrows}} \pi _{1}(S,s)^{l\text{-alg},r}
\end{equation}
between their Tannaka duals.

 Thanks to [Del], it can be seen that these morphisms of affine group schemes come from homomorphisms
between the fundamental groups
\begin{equation}
\pi (U_{f}\text{Et}_{l}^{\leq r}(X)) \overset{f_{*}}{\rightarrow} f^{*}\pi (\text{Et}_{l}^{\leq r}(S)),
\end{equation}
\begin{eqnarray}
p^{*}\pi (U_{f}\text{Et}_{l}^{\leq r}(X)) \overset{p_{*}}{\leftarrow} \pi (\text{Et}_{l}^{\leq r}(S)),
\label{eq:3reluni}
\end{eqnarray}
and
\begin{eqnarray}
\pi (U\text{Et}_{l}(X_{s})) \overset{i_{s*}}{\rightarrow} i_{s}^{*}\pi (U_{f}\text{Et}_{l}^{\leq r}(X)).
\end{eqnarray}

\begin{dfn} The {\it relatively $l$-unipotent fundamental group} of $X/S$ with respect to $(f,r)$ at
$p$ is defined to be the kernel of the morphism (\ref{eq:3reluni}).
This is an affine group scheme over $\text{Et}_{l}^{\leq r}(S)$.
We denote it by $\pi_{1}(X/S,r,p)^{\text{rel-}l\text{-unip}}$.
\end{dfn}

The morphisms of schemes $X_{s} \overset{i_{s}}{\rightarrow} X \overset{f}{\rightarrow} S$
induce homomorphisms
\begin{equation}
\pi (U\text{Et}_{l}(X_{s})) \overset{i_{s*}}{\rightarrow} i_{s}^{*}\pi (U_{f}\text{Et}_{l}^{\leq r}(X))
\overset{i_{s}^{*}f_{*}}{\rightarrow} i_{s}^{*}f^{*}\pi (\text{Et}_{l}^{\leq r}(S))
\end{equation}
of affine group schemes over $U\text{Et}_{l}(X_{s})$.
Taking fibers at $s$, we get
\begin{equation}
\pi _{1}(X_{s},s)^{l\text{-unip}} \overset{i_{s*}}{\rightarrow} \pi _{1}(X,s)^{l\text{-rel-unip},r}
\overset{f_{*}}{\rightarrow} \pi _{1}(S,s)^{l\text{-alg},r}.
\end{equation}
Since inverse image of objects in $\text{Et}_{l}^{\leq r}(S)$ by $f \circ i_{s} = f' \circ s$ is trivial,
the composition of the above morphisms is trivial.
Thus we have the unique morphism 
\begin{eqnarray}
\pi _{1}(X_{s},s)^{l\text{-unip}} = s^{*}\pi (U\text{Et}_{l}(X_{s})) \rightarrow s^{*}\pi _{1}(X/S,r,p)^{\text{rel-}l\text{-unip}}.
\label{eq:3lama}
\end{eqnarray}

The following theorem is an $l$-adic etale version of [Laz] Theorem 1.6.

\begin{thm}
Let us suppose that the rank of $R^{1}f_{*}\mathbb{Q}_{l}$ is $\leq r$.
Then, the morphism (\ref{eq:3lama}) is an isomorphism.
\label{4hes}
\end{thm}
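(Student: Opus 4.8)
The plan is to reduce Theorem \ref{4hes} to the formal arguments of \cite{Laz} 1.2 by verifying that the three inputs Lazda uses — a good formalism of pushforward $Rf_*$, compatibility with base change, and the degeneration/exactness properties packaged in the Leray spectral sequence — are available for smooth $\mathbb{Q}_l$-sheaves under the hypothesis that $R^1f_*\mathbb{Q}_l$ has rank $\leq r$. Concretely, I would argue in four steps.

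First, I would show that $i_{s*}$ in (\ref{eq:3lama}) is a closed immersion, equivalently that every object of $U\mathrm{Et}_l(X_s)$ is a subquotient of $i_s^*$ of an object of $U_f\mathrm{Et}_l^{\leq r}(X)$. This is the statement that sheaves on the fiber which are iterated extensions of the trivial sheaf extend (after passing to subquotients) to relatively unipotent sheaves on the total space; I would establish it by induction on the unipotence length, using that $f$ has a section $p$ (so $f_*$ and $p^*$ give a retraction at the level of Tannakian categories), and using proper base change to control $R^0f_*$ and $R^1f_*$ of a relatively unipotent sheaf. The rank hypothesis on $R^1f_*\mathbb{Q}_l$ enters here: it guarantees that the extension classes one needs to realize, which are governed by $R^1f_*$ of unipotent sheaves (themselves built from tensor constructions on $R^1f_*\mathbb{Q}_l$), stay inside $\mathrm{Et}_l^{\leq r}(S)$, so the extensions one builds on $X$ genuinely lie in $U_f\mathrm{Et}_l^{\leq r}(X)$ rather than a larger category.

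Second, I would identify the image of $i_{s*}$ with the kernel $s^*\pi_1(X/S,r,p)^{\text{rel-}l\text{-unip}}$, i.e. prove exactness in the middle. The containment "image $\subseteq$ kernel" is the triviality of the composite already noted in the text. For the reverse, I would use the Tannakian dictionary: an object of $U_f\mathrm{Et}_l^{\leq r}(X)$ whose pullback to $X_s$ is trivial must, by proper smooth base change and the Leray spectral sequence for $f$, descend through $f^*$ to an object of $\mathrm{Et}_l^{\leq r}(S)$ up to the relatively unipotent part — this is exactly Lazda's argument transported to the étale setting, where the role of his de Rham/rigid cohomology is played by $R^\bullet f_*$ of $\mathbb{Q}_l$-sheaves. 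Then, third, surjectivity of $f_*$ onto $\pi_1(S,s)^{l\text{-alg},r}$ is immediate from $p_*$ being a section of it, so the sequence $1 \to \mathrm{im}(i_{s*}) \to \pi_1(X,s)^{l\text{-rel-unip},r} \to \pi_1(S,s)^{l\text{-alg},r} \to 1$ is exact, and combining with the first two steps gives that (\ref{eq:3lama}) is an isomorphism onto the kernel.

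The main obstacle I anticipate is the first step — the faithful-flatness/closed-immersion statement for $i_{s*}$ — because it requires genuinely checking that Lazda's motivic cohomological input has an honest $l$-adic étale counterpart with the right finiteness. One must know that $R^if_*$ of a relatively unipotent smooth $\mathbb{Q}_l$-sheaf is again a smooth $\mathbb{Q}_l$-sheaf of controlled rank (using smooth proper base change and the computation of $R^1f_*\mathbb{Q}_l$, together with its Poincaré-dual top piece $R^2f_*\mathbb{Q}_l \cong \mathbb{Q}_l(-1)$), and that the Leray spectral sequence yields the needed exact sequences of $\mathrm{Ext}$-groups in $\mathrm{Et}_l^{\leq r}(S)$; the bound $\mathrm{rk}\,R^1f_*\mathbb{Q}_l \leq r$ is precisely what keeps all of this inside the truncated category. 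Once these cohomological facts are in place, the rest is a formal transcription of \cite{Laz} 1.2, and I would present it as such rather than reproving the Tannakian formalism.
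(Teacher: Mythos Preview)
Your proposal is correct and follows essentially the same route as the paper: both transport Lazda's argument by verifying the three Tannakian conditions (the paper labels them (A), (B), (C)), with the main work being the explicit construction of relatively unipotent objects $\mathcal{W}_n \in U_f\mathrm{Et}_l^{\leq r}(X)$ lifting the canonical unipotent tower $\mathcal{U}_n$ on $X_s$, using the Leray five-term sequence together with the section $p$ to split the extension obstruction --- exactly the obstacle you anticipate in your final paragraph. The one point you leave implicit is condition (B) (that the maximal trivial subobject of $i_s^*\mathcal{E}$ lifts to a subobject of $\mathcal{E}$), which is needed alongside your descent statement (A) for exactness in the middle; it follows quickly from the counit argument and is much lighter than (C).
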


Since $\pi _{1}(X,s)^{l\text{-rel-unip},r} \rightarrow \pi _{1}(S,s)^{l\text{-alg},r}$ is surjective,
this is equivalent to say that
\begin{equation}
1 \rightarrow \pi _{1}(X_{s},s)^{l\text{-unip}} \overset{i_{s*}}{\rightarrow} \pi _{1}(X,s)^{l\text{-rel-unip},r}
\overset{f_{*}}{\rightarrow} \pi _{1}(S,s)^{l\text{-alg},r} \rightarrow 1.
\end{equation}
is an exact sequence of affine group schemes over $\mathbb{Q}_{l}$.

We start the proof of Theorem \ref{4hes}, following the proof of Lazda given in [Laz] 1.2.
As in the proof of Lazda, it is sufficient to prove the following:
\begin{description}
\item{(A)}\mbox{}
If $\mathcal{E} \in U_{f}\text{Et}_{l}^{\leq r}(X)$ satisfies that $i_{s}^{*}\mathcal{E}$ is trivial,
then there exists $\mathcal{F} \in \text{Et}_{l}^{\leq r}(S)$ such that $\mathcal{E} \cong f^{*}\mathcal{F}$.
\item{(B)}\mbox{}
Let $\mathcal{E} \in U_{f}Et_{l}^{\leq r}(X)$, and let $\mathcal{F}_{0} \subset  i_{s}^{*}\mathcal{E}$ denote the largest
trivial sub-object.
Then there exists $\mathcal{E}_{0} \subset \mathcal{E}$ such that
$\mathcal{F}_{0} \cong i_{s}^{*}\mathcal{E}_{0}$ as a  sub-object of $i_{s}^{*}\mathcal{E}$.
\item{(C)}\mbox{}
For each $\mathcal{E} \in U\text{Et}_{l}(X_{s})$, there exists
$\mathcal{F} \in U_{f}\text{Et}_{l}^{\leq r}(X)$ and a surjective homomorphism $i_{s}^{*}\mathcal{F} \rightarrow \mathcal{E}$.
\end{description}

Before proving these assertions, we check that the restrictions of functors
$f_{*}, R^1f_{*} : U_{f}\text{Et}_{l}(X) \rightarrow \text{Et}_{l}(S)$ to
$U_{f}\text{Et}_{l}^{\leq r}(X) \rightarrow \text{Et}_{l}^{\leq r}(S)$ are well-defined.

\begin{dfn}
Let $g : Z \rightarrow W$ be a proper smooth and geometrically connected morphism between connected Noetherian schemes,
and $t$ be a natural number.
For objects $\mathcal{E} \in U_{g}\text{Et}_{l}^{\leq t}(Z)$, we define the notion of \textquotedblleft having
unipotent class $\leq m$ with respect to $(g,t)$\textquotedblright inductively as follows.
If $\mathcal{E}$ belongs to the essential image of $g^{*} : \text{Et}_{l}^{\leq t}(W) \rightarrow U_{g}\text{Et}_{l}^{\leq t}(W)$,
then we say $\mathcal{E}$ has unipotent class $\leq  1$ with respect to $(g,t)$. If there exists an extension
\begin{equation}
0 \rightarrow \mathcal{V} \rightarrow \mathcal{E} \rightarrow \mathcal{E}' \rightarrow 0
\end{equation}
with $\mathcal{E}'$ of unipotent class $\leq m-1$ and $\mathcal{V}$ of unipotent class $\leq 1$,
then we say that $\mathcal{E}$ has unipotent class $\leq m$.
\end{dfn}
\begin{lem}
The functors $f_{*}, R^{1}f_{*} : U_{f}\text{Et}_{l}^{\leq r}(X) \rightarrow \text{Et}_{l}^{\leq r}(S)$ are well-defined.
\end{lem}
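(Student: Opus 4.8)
The plan is to prove this by induction on the unipotent class of $\mathcal{E}$ with respect to $(f,r)$. Before starting, I would record the standard fact that, since $f$ is proper and smooth and $l$ is invertible on $S$, for any $\mathcal{E}\in\text{Et}_{l}(X)$ the higher direct images $R^{i}f_{*}\mathcal{E}$ are again smooth $\mathbb{Q}_{l}$-sheaves on $S$ --- this follows from smooth and proper base change together with constructibility of $R^{i}f_{*}\mathcal{E}$. Thus $f_{*}$ and $R^{1}f_{*}$ already define functors $U_{f}\text{Et}_{l}^{\leq r}(X)\rightarrow\text{Et}_{l}(S)$, and the only issue is that their images should land in the subcategory $\text{Et}_{l}^{\leq r}(S)$. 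Throughout I work under the standing hypothesis of Theorem \ref{4hes}, namely that $R^{1}f_{*}\mathbb{Q}_{l}$ has rank $\leq r$.

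The first step is to reduce to objects of finite unipotent class. I would consider the full subcategory $\mathcal{C}\subset\text{Et}_{l}(X)$ of objects admitting a finite filtration whose graded pieces are pullbacks $f^{*}\mathcal{F}$ with $\mathcal{F}\in\text{Et}_{l}^{\leq r}(S)$, i.e. the objects of finite unipotent class with respect to $(f,r)$. Working with representations of $\pi_{1}$ (a lisse $\mathbb{Q}_{l}$-sheaf on the connected scheme $X$, resp. $S$, being a continuous finite-dimensional representation of $\pi_{1}(X,s)$, resp. $\pi_{1}(S,s)$, and a pullback from $S$ being exactly a representation that is trivial on the image of the fundamental group of the geometric fibre $X_{s}$, by the homotopy exact sequence \cite{SGA1}), I would check that $\mathcal{C}$ is stable under subquotients, tensor products, duals and extensions: for a subobject $\mathcal{W}\subset\mathcal{E}$ one intersects $\mathcal{W}$ with such a filtration of $\mathcal{E}$, and the graded pieces of the induced filtration on $\mathcal{W}$ are subobjects of those of $\mathcal{E}$, hence again lie in $\text{Et}_{l}^{\leq r}(S)$ because that category is closed under subobjects; quotients, tensor products and duals are handled the same way (using that $\text{Et}_{l}^{\leq r}(S)$ is closed under quotients, tensor products and duals), and extensions concatenate filtrations. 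Since $\mathcal{C}$ contains the essential image of $f^{*}$ and is contained in $U_{f}\text{Et}_{l}^{\leq r}(X)$ (being built from pullbacks by iterated extensions), the minimality in the definition of $U_{f}\text{Et}_{l}^{\leq r}(X)$ forces $\mathcal{C}=U_{f}\text{Et}_{l}^{\leq r}(X)$. So every object of $U_{f}\text{Et}_{l}^{\leq r}(X)$ has finite unipotent class, and it suffices to treat objects of unipotent class $\leq m$, by induction on $m$.

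For the base case $m=1$ I would write $\mathcal{E}\cong f^{*}\mathcal{F}$ with $\mathcal{F}\in\text{Et}_{l}^{\leq r}(S)$; since $f$ is proper with geometrically connected fibres, $f_{*}\mathbb{Q}_{l}\cong\mathbb{Q}_{l}$, so the projection formula gives $f_{*}\mathcal{E}\cong\mathcal{F}$ and $R^{1}f_{*}\mathcal{E}\cong\mathcal{F}\otimes R^{1}f_{*}\mathbb{Q}_{l}$, and both lie in $\text{Et}_{l}^{\leq r}(S)$ because $R^{1}f_{*}\mathbb{Q}_{l}\in\text{Et}_{l}^{\leq r}(S)$ (standing hypothesis) and $\text{Et}_{l}^{\leq r}(S)$ is closed under tensor products. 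For the inductive step I would pick an exact sequence $0\rightarrow\mathcal{V}\rightarrow\mathcal{E}\rightarrow\mathcal{E}'\rightarrow 0$ with $\mathcal{V}$ of unipotent class $\leq 1$ and $\mathcal{E}'$ of unipotent class $\leq m-1$ and apply $Rf_{*}$, obtaining the long exact sequence
\[ 0\rightarrow f_{*}\mathcal{V}\rightarrow f_{*}\mathcal{E}\rightarrow f_{*}\mathcal{E}'\xrightarrow{\delta}R^{1}f_{*}\mathcal{V}\rightarrow R^{1}f_{*}\mathcal{E}\rightarrow R^{1}f_{*}\mathcal{E}'\rightarrow R^{2}f_{*}\mathcal{V}\rightarrow\cdots \]
in $\text{Et}_{l}(S)$; by the base case and induction, $f_{*}\mathcal{V},R^{1}f_{*}\mathcal{V},f_{*}\mathcal{E}',R^{1}f_{*}\mathcal{E}'\in\text{Et}_{l}^{\leq r}(S)$. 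Then $f_{*}\mathcal{E}$ is an extension of $\ker\delta\subset f_{*}\mathcal{E}'$ by $f_{*}\mathcal{V}$, and $R^{1}f_{*}\mathcal{E}$ is an extension of $\ker(R^{1}f_{*}\mathcal{E}'\rightarrow R^{2}f_{*}\mathcal{V})\subset R^{1}f_{*}\mathcal{E}'$ by $\operatorname{coker}\delta$, which is a quotient of $R^{1}f_{*}\mathcal{V}$; since $\text{Et}_{l}^{\leq r}(S)$ is closed under subobjects, quotients and extensions, both $f_{*}\mathcal{E}$ and $R^{1}f_{*}\mathcal{E}$ lie in $\text{Et}_{l}^{\leq r}(S)$, which closes the induction. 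Note that no control of $R^{2}f_{*}$ is needed, only that the relevant term is a subobject of something already in $\text{Et}_{l}^{\leq r}(S)$.

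The part I expect to require the most care is the reduction to finite unipotent class, i.e. verifying that the subcategory $\mathcal{C}$ is stable under the four Tannakian operations so that it coincides with $U_{f}\text{Et}_{l}^{\leq r}(X)$; once this is in hand, the rest is the standard interplay of smooth-and-proper base change (to know a priori that the higher direct images are lisse), the projection formula, and a routine chase in the long exact sequence of higher direct images.
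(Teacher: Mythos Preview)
Your proof is correct and follows essentially the same route as the paper: induction on the unipotent class, projection formula for the base case, and the long exact sequence of higher direct images for the inductive step. The one substantive addition in your write-up is the explicit verification that every object of $U_{f}\text{Et}_{l}^{\leq r}(X)$ has finite unipotent class (your subcategory $\mathcal{C}$ argument); the paper silently takes this for granted and begins directly with ``Let $\mathcal{E}$ be an element \dots\ whose unipotent class $\leq m$'', so your version is in fact more complete on this point.
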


\begin{proof} Let $\mathcal{E}$ be an element of $U_{f}\text{Et}_{l}^{\leq r}(X)$ whose unipotent class $\leq m$.
We use induction on $m$. For the case $m = 1$, there exists $\mathcal{F} \in \text{Et}_{l}^{\leq r}(S)$ such that $f^{*}\mathcal{F} \cong \mathcal{E}$.
Then, $f_{*}f^{*}\mathcal{F} \cong \mathcal{F} \in \text{Et}_{l}^{\leq r}(S)$ and
$R^{1}f_{*}f^{*}\mathcal{F} \cong R^{1}f_{*}\mathbb{Q}_{l}\otimes\mathcal{F} \in \text{Et}_{l}^{\leq r}(S)$ by projection formula.
For the case $m \geq 2$, we have an exact sequence
\begin{equation}
0 \rightarrow \mathcal{V} \rightarrow \mathcal{E} \rightarrow \mathcal{E}' \rightarrow 0
\end{equation}
with $\mathcal{E}'$ of unipotent class $\leq m-1$ and $\mathcal{V}$ of unipotent class $\leq 1$. Taking the long exact sequence
\begin{equation}
0 \rightarrow f_{*}\mathcal{V} \rightarrow f_{*}\mathcal{E} \rightarrow f_{*}\mathcal{E}' \rightarrow
R^{1}f_{*}\mathcal{V} \rightarrow R^{1}f_{*}\mathcal{E} \rightarrow R^{1}f_{*}\mathcal{E}',
\end{equation}
it follows that $f_{*}\mathcal{E}, R^{1}f_{*}\mathcal{E} \in \text{Et}_{l}^{\leq r}(S)$ by induction hypothesis.
\end{proof}

For $\mathcal{E} \in \text{Et}_{l}^{\leq r}(X)$ (resp. $\mathcal{E}' \in \text{Et}_{l}^{\leq r}(X_{s})$)
we denote the counit of the adjunction between $f^{*}$ and $f_{*}$ (resp. $f'^{*}$ and $f'_{*}$) as
$c_{\mathcal{E}}: f^{*}f_{*}\mathcal{E}  \rightarrow \mathcal{E}$
(resp. $c'_{\mathcal{E'}}: f'^{*}f'_{*}\mathcal{E'}  \rightarrow \mathcal{E'}$)

We first verify the assertion (A).
\begin{prop}
If $i_{s}^{*}\mathcal{E}$ is trivial,
then $c_{\mathcal{E}}: f^{*}f_{*}\mathcal{E}  \rightarrow \mathcal{E}$ is an isomorphism.
\label{4counit}
\end{prop}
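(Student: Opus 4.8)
The plan is to follow the classical argument that a smooth $\mathbb{Q}_l$-sheaf with trivial restriction to a fiber, which already lies in the relative-unipotent category $U_f\text{Et}_l^{\leq r}(X)$, must descend along $f$. Concretely, I will show $c_{\mathcal{E}}$ is an isomorphism by checking it on both source and target after applying $i_s^*$, and then using properness plus the relative-unipotent hypothesis to upgrade a fiberwise isomorphism to a global one. The first step is to understand $f_*\mathcal{E}$ and $R^1f_*\mathcal{E}$ via base change: since $f$ is proper and smooth and $l$ is invertible, proper base change gives $i_s^* f_* \mathcal{E} \cong f'_* i_s^* \mathcal{E}$ and $i_s^* R^1 f_* \mathcal{E} \cong R^1 f'_* i_s^* \mathcal{E}$, and because $i_s^*\mathcal{E}$ is the trivial sheaf $\mathbb{Q}_l^{\oplus m}$ on the geometrically connected fiber $X_s$, we get $f'_* i_s^*\mathcal{E} \cong \mathbb{Q}_l^{\oplus m}$ and $R^1f'_* i_s^*\mathcal{E} \cong (R^1f'_*\mathbb{Q}_l)^{\oplus m}$. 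Thus $i_s^*$ of the counit $c_{\mathcal{E}}: f^*f_*\mathcal{E} \to \mathcal{E}$ becomes the counit $c'_{i_s^*\mathcal{E}}: f'^* f'_* i_s^*\mathcal{E} \to i_s^*\mathcal{E}$ for the trivial sheaf, which is manifestly an isomorphism (both sides are $\mathbb{Q}_l^{\oplus m}$, and the adjunction counit for a trivial sheaf over a geometrically connected proper base is the identity).

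Having shown $i_s^* c_{\mathcal{E}}$ is an isomorphism, I would then argue that $c_{\mathcal{E}}$ itself is an isomorphism. Let $\mathcal{K} = \ker c_{\mathcal{E}}$ and $\mathcal{C} = \operatorname{coker} c_{\mathcal{E}}$ in $\text{Et}_l(X)$ (these exist since the category of smooth $\mathbb{Q}_l$-sheaves is abelian, and $f^*f_*\mathcal{E}$, $\mathcal{E}$ both lie in the rank-bounded relative-unipotent category, so $f^*f_*\mathcal{E}$ in particular is in the essential image of $f^*$ hence relative-unipotent of class $\leq 1$). The key point is that $i_s^*$ is exact, so $i_s^*\mathcal{K} = \ker(i_s^*c_{\mathcal{E}}) = 0$ and $i_s^*\mathcal{C} = \operatorname{coker}(i_s^*c_{\mathcal{E}}) = 0$. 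Now $\mathcal{K}$ is a subobject of $f^*f_*\mathcal{E}$, which is in the essential image of $f^*$; and a smooth $\mathbb{Q}_l$-sheaf pulled back from $S$ whose restriction to a fiber vanishes must itself vanish, because its rank (a locally constant function, hence constant on connected $S$) equals the rank of its fiber restriction, which is $0$ — so $\mathcal{K} = 0$, i.e. $c_{\mathcal{E}}$ is injective. Then $\mathcal{C}$ is a quotient of $\mathcal{E} \in U_f\text{Et}_l^{\leq r}(X)$, so $\mathcal{C}$ is again relatively unipotent of bounded rank, and by induction on its unipotent class one reduces to the class $\leq 1$ case: $\mathcal{C}$ would be pulled back from $S$, and again vanishing on a fiber forces $\mathcal{C} = 0$. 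Hence $c_{\mathcal{E}}$ is surjective as well, and therefore an isomorphism.

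I anticipate the main obstacle to be the last step: controlling the cokernel $\mathcal{C}$. A priori $\operatorname{coker} c_{\mathcal{E}}$ need not lie in the essential image of $f^*$ just because $\mathcal{E}$ does not — one genuinely needs that $\mathcal{E}$ is relatively unipotent and argue by induction on unipotent class, using the long exact sequence for $f_*, R^1f_*$ together with the hypothesis $\operatorname{rk} R^1f_*\mathbb{Q}_l \leq r$ (which is exactly what keeps $R^1f_*$ of each stage inside $\text{Et}_l^{\leq r}(S)$, so that the counit comparison stays inside the correct categories). The subtlety is that when $\mathcal{E}$ has unipotent class $\geq 2$ one must compare the counit of $\mathcal{E}$ with those of the sub- and quotient objects in its defining extension; I would set up a morphism of short exact sequences relating $f^*f_*(-) \to (-)$ across $0 \to \mathcal{V} \to \mathcal{E} \to \mathcal{E}' \to 0$ and apply the five lemma together with the already-established class $\leq 1$ case and the vanishing of $i_s^*$ of the relevant $R^1f_*$ terms. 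Everything else — proper/smooth base change, exactness of $i_s^*$, constancy of rank — is standard and I would not belabor it.
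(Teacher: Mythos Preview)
Your proposal is correct, but it is substantially more elaborate than the paper's proof, and the ``main obstacle'' you anticipate is illusory.

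The paper's proof is essentially your first paragraph and nothing more: it observes that it suffices to show $i_s^*c_{\mathcal{E}}$ is an isomorphism, then identifies $i_s^*c_{\mathcal{E}}$ with the counit $c'_{i_s^*\mathcal{E}}$ via proper base change, and notes that this counit is an isomorphism because $i_s^*\mathcal{E}$ is trivial and $f'$ is geometrically connected. The reduction ``it suffices to check after $i_s^*$'' is immediate: a morphism of smooth $\mathbb{Q}_l$-sheaves on the connected scheme $X$ is an isomorphism if and only if it is an isomorphism at one geometric point, and any geometric point of $X_s$ furnishes such a point of $X$.

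Your second and third paragraphs are therefore unnecessary. In particular, the same rank argument you invoke for the kernel works verbatim for the cokernel: $\mathcal{C} = \operatorname{coker} c_{\mathcal{E}}$ is a smooth $\mathbb{Q}_l$-sheaf on the connected scheme $X$ with $i_s^*\mathcal{C} = 0$, so its rank vanishes at some geometric point of $X$, hence everywhere, hence $\mathcal{C} = 0$. No induction on unipotent class is required, no appeal to the bound on $R^1 f_*\mathbb{Q}_l$ is needed, and the five-lemma comparison you sketch is superfluous. Likewise, for the kernel you do not need the detour through ``subobject of something in the essential image of $f^*$''; the rank argument alone suffices. In short, the relative-unipotent hypothesis on $\mathcal{E}$ plays no role whatsoever in this proposition beyond ensuring that $f_*\mathcal{E}$ lands in the correct category.
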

\begin{proof} It is sufficient to show that the homomorphism:
\begin{equation}
i_{s}^{*}c_{\mathcal{E}}: i_{s}^{*}f^{*}f_{*}\mathcal{E} \rightarrow i_{s}^{*}\mathcal{E},
\end{equation}
which we get by pulling back $c_{\mathcal{E}}$ by $i_{s}$, is an isomorphism.
By proper base change theorem, 
\begin{equation}
i_{s}^{*}f^{*}f_{*}\mathcal{E} \cong f'^{*}s^{*}f_{*}\mathcal{E} \cong f'^{*}f'_{*}i_{s}^{*}\mathcal{E},
\end{equation}
so what we should show is that $c'_{\mathcal{E'}}$ is an isomorphism for any trivial $\mathcal{E'} \in \text{Et}(X_s)$.
This follows from the assumption that $f$ is geometrically connected.
\end{proof}
\bigskip

We next show the assertion (B).
\begin{prop}
Let $\mathcal{E} \in U_{f}\text{Et}_{l}^{\leq r}(X)$, and let $\mathcal{F}_{0} \subset  i_{s}^{*}\mathcal{E}$ denote the largest
trivial sub-object.
Then there exists $\mathcal{E}_{0} \subset \mathcal{E}$ such that
$\mathcal{F}_{0} \cong i_{s}^{*}\mathcal{E}_{0}$ as a  sub-object of $i_{s}^{*}\mathcal{E}$.
\end{prop}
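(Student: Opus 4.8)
The statement to prove is assertion (B): given $\mathcal{E} \in U_f\mathrm{Et}_l^{\leq r}(X)$ with largest trivial sub-object $\mathcal{F}_0 \subset i_s^*\mathcal{E}$, one must produce $\mathcal{E}_0 \subset \mathcal{E}$ with $i_s^*\mathcal{E}_0 = \mathcal{F}_0$ inside $i_s^*\mathcal{E}$. The natural candidate is $\mathcal{E}_0 := \mathrm{image}\bigl(c_{\mathcal{E}} \colon f^*f_*\mathcal{E} \to \mathcal{E}\bigr)$, or rather a sub-object built from the counit. So the first step is to form $f_*\mathcal{E} \in \mathrm{Et}_l^{\leq r}(S)$ (legitimate by the preceding Lemma) and consider $c_{\mathcal{E}} \colon f^*f_*\mathcal{E} \to \mathcal{E}$; set $\mathcal{E}_0$ to be its image in $\mathcal{E}$. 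The goal is then to identify $i_s^*\mathcal{E}_0$ with $\mathcal{F}_0$.

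The second step is the base-change computation at $s$. Pulling back along $i_s$ and using proper base change, $i_s^* f^* f_* \mathcal{E} \cong f'^* s^* f_* \mathcal{E} \cong f'^* f'_* i_s^*\mathcal{E}$, and $i_s^* c_{\mathcal{E}}$ is identified with the counit $c'_{i_s^*\mathcal{E}} \colon f'^* f'_* (i_s^*\mathcal{E}) \to i_s^*\mathcal{E}$ (compatibility of counits with base change). Now $X_s$ is a proper geometrically connected variety over a separably closed field, so for any $\mathcal{G} \in \mathrm{Et}(X_s)$ the object $f'^* f'_* \mathcal{G}$ is the "constant part" of $\mathcal{G}$: concretely $f'_*\mathcal{G}$ is the space of global sections and $f'^* f'_*\mathcal{G}$ is the trivial sheaf on that space, mapping into $\mathcal{G}$ with image exactly the largest trivial sub-object. (This is essentially the same fact invoked in the proof of Proposition \ref{4counit}, that $c'$ is an isomorphism on trivial objects, combined with left-exactness of $f'_*$.) Hence the image of $c'_{i_s^*\mathcal{E}}$ is precisely $\mathcal{F}_0$, and since $i_s^*$ is exact it commutes with taking images, giving $i_s^*\mathcal{E}_0 = \mathrm{image}(i_s^* c_{\mathcal{E}}) = \mathrm{image}(c'_{i_s^*\mathcal{E}}) = \mathcal{F}_0$ as sub-objects of $i_s^*\mathcal{E}$. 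Finally one must check $\mathcal{E}_0 \in U_f\mathrm{Et}_l^{\leq r}(X)$: it is a subquotient of $f^*f_*\mathcal{E}$ with $f_*\mathcal{E} \in \mathrm{Et}_l^{\leq r}(S)$, hence lies in the Tannakian subcategory by construction (closure under subquotients and under $f^*$ of $\mathrm{Et}_l^{\leq r}(S)$).

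The main obstacle is the faithful bookkeeping of the identification "$i_s^* c_{\mathcal{E}} = c'_{i_s^*\mathcal{E}}$ as morphisms, not just as objects": one needs the base-change isomorphism $i_s^* f^* f_* \cong f'^* f'_* i_s^*$ to be compatible with the two counits, which is a standard but slightly delicate diagram chase using the Beck–Chevalley / proper base change 2-morphism. Once that compatibility is in hand, the rest is formal: exactness of $i_s^*$ to commute with images, and the elementary description of $f'_*$ and the counit over a proper variety on a separably closed base to pin down the image as the maximal trivial sub-object. I would also remark that the resulting $\mathcal{E}_0$ is functorial in $\mathcal{E}$, which is what gets used when (B) is applied in the proof of Theorem \ref{4hes}.
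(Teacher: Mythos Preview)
Your proposal is correct and follows essentially the same approach as the paper: define $\mathcal{E}_{0}$ as the image of the counit $c_{\mathcal{E}}\colon f^{*}f_{*}\mathcal{E}\to\mathcal{E}$, identify $i_{s}^{*}c_{\mathcal{E}}$ with $c'_{i_{s}^{*}\mathcal{E}}$ via proper base change (the paper cites the proof of the preceding proposition for this), and then use that the image of $c'_{i_{s}^{*}\mathcal{E}}$ is exactly the largest trivial sub-object together with exactness of $i_{s}^{*}$. The paper's argument for ``image of $c'_{\mathcal{F}}=\mathcal{F}_{0}$'' is phrased slightly differently (using that $c'_{\mathcal{F}_{0}}$ is an isomorphism and that the trivial image of $c'_{\mathcal{F}}$ must factor through $\mathcal{F}_{0}$), but it is the same content as your description of $f'^{*}f'_{*}$ as the ``constant part'' functor.
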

\begin{proof}Let us denote $i_{s}^{*}\mathcal{E}$ as $\mathcal{F}$. We have a following commutative diagram
\[
\xymatrix{
\mathcal{F}_{0} \ar[r] & \mathcal{F} \\
 f'^{*}f'_{*}\mathcal{F}_{0} \ar[u]_{\wr c'_{\mathcal{F}_{0}}} \ar[r] &  f'^{*}f'_{*}\mathcal{F}. \ar[u]_{c'_{\mathcal{F}}} \ar@{.>}[lu]
}
\]
Since $\mathcal{F}_{0}$ is trivial, $c'_{\mathcal{F}_{0}}$ is an isomorphism,
which we have proved in the proof of proposition \ref{4counit}.
Since $f'^{*}f'_{*}\mathcal{F}$ is trivial, so is  the image of
$c'_{\mathcal{F}}: f'^{*}f'_{*}\mathcal{F}  \rightarrow \mathcal{F}$, and then we
get the unique homomorphism $f'^{*}f'_{*}\mathcal{F} \rightarrow \mathcal{F}_{0}$.
Hence $\mathcal{F}_{0}$ is the image of $f'^{*}f'_{*}\mathcal{F} \rightarrow \mathcal{F}$.

 By the proof of the previous proposition, $f'^{*}f'_{*}\mathcal{F} \rightarrow \mathcal{F}$
is obtained by pulling back $c_{\mathcal{E}}: f^{*}f_{*}\mathcal{E} \rightarrow \mathcal{E}$ by $i_{s}$.
Thanks to exactness of $i_{s}^{*}$, $\mathcal{F}_{0}$ is the inverse image of the image of $c_{\mathcal{E}}$.
\end{proof}
\bigskip

Finally, we start the proof of the assertion (C).

 For $n \in \mathbb{N}$, we define an object $\mathcal{U}_{n} \in U\text{Et}_{l}(X_{s})$ inductively as follows.
Let $\mathcal{U}_{1}$ be the trivial smooth $\mathbb{Q}_{l}$-sheaf of rank $1$ (denoted by $\mathbb{Q}_{l}$).
For $n \geq 1$, we will define $\mathcal{U}_{n}$ to be the extension of $\mathcal{U}_{n-1}$ by 
$f'^{*}(R^{1}f'_{*}(\mathcal{U}_{n-1}^{\vee}))^{\vee}$
corresponding to the identity under the isomorphisms:
\begin{equation} \begin{split} 
\mathrm{Ext}(\mathcal{U}_{n},f'^{*}(R^{1}f'_{*}(\mathcal{U}_{n-1}^{\vee}))^{\vee})
&\cong \mathrm{Ext}(\mathbb{Q}_{l},\mathcal{U}_{n}^{\vee}
\otimes f'^{*}(R^{1}f'_{*}(\mathcal{U}_{n-1}^{\vee}))^{\vee}) \\
&\cong H^{1}(X_{s},\mathcal{U}_{n}^{\vee} \otimes f'^{*}(R^{1}f'_{*}(\mathcal{U}_{n-1}^{\vee})^{\vee}))\\
&\cong R^{1}f'_{*}(\mathcal{U}_{n}^{\vee} \otimes f'^{*}(R^{1}f'_{*}(\mathcal{U}_{n-1}^{\vee})^{\vee}))\\
&\cong R^{1}f'_{*}(\mathcal{U}_{n}^{\vee}) \otimes (R^{1}f'_{*}(\mathcal{U}_{n-1}^{\vee})^{\vee})\\
&\cong \mathrm{End}(R^{1}f'_{*}(\mathcal{U}_{n}^{\vee})).
\end{split}\end{equation}
Taking higher direct images of the dual of the short exact sequence
\begin{equation}
0 \rightarrow f'^{*}(R^{1}f'_{*}(\mathcal{U}_{n-1}^{\vee}))^{\vee} \rightarrow \mathcal{U}_{n}
\rightarrow \mathcal{U}_{n-1} \rightarrow 0,
\end{equation}
we get the following exact sequence
\begin{equation}
0 \rightarrow f'_{*}(\mathcal{U}_{n-1}^{\vee}) \rightarrow f'_{*}(\mathcal{U}_{n}^{\vee})
\rightarrow R^{1}f'_{*}(\mathcal{U}_{n-1}^{\vee}) 
\overset{\delta}{\rightarrow} R^{1}f'_{*}(\mathcal{U}_{n-1}^{\vee})
\rightarrow R^{1}f'_{*}(\mathcal{U}_{n}^{\vee}).
\end{equation}

\begin{lem}
The connecting homomorphism $\delta$ is the identity.
\end{lem}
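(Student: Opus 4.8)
The plan is to unwind the definition of $\mathcal{U}_n$: it was constructed precisely so that the extension class of $0\to f'^{*}(R^1f'_{*}(\mathcal{U}_{n-1}^{\vee}))^{\vee}\to \mathcal{U}_n\to \mathcal{U}_{n-1}\to 0$ is ``the identity'', and the connecting homomorphism $\delta$ of the long exact sequence of $Rf'_{*}$ applied to the dual sequence is, after the identifications already made in the statement, nothing but that extension class. So the content of the lemma is that the various canonical isomorphisms involved match up, and there is no geometric input beyond the fact that $f'$ is proper and geometrically connected.

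Concretely, write $M:=R^1f'_{*}(\mathcal{U}_{n-1}^{\vee})$, a finite dimensional $\mathbb{Q}_l$-vector space since $s$ is a geometric point, so that the dual of the defining short exact sequence reads $0\to \mathcal{U}_{n-1}^{\vee}\to \mathcal{U}_n^{\vee}\to f'^{*}M\to 0$, with some class $e'\in \mathrm{Ext}^1(f'^{*}M,\mathcal{U}_{n-1}^{\vee})$. First, by the construction of $\mathcal{U}_n$ (the displayed chain of isomorphisms, read with $\mathcal{U}_{n-1}$ in place of the outer object), $e'$ is the element carried to $\mathrm{id}_M$ by
\[
\mathrm{Ext}^1(f'^{*}M,\mathcal{U}_{n-1}^{\vee})\;\cong\;R^1f'_{*}\big(\mathcal{U}_{n-1}^{\vee}\otimes f'^{*}(M^{\vee})\big)\;\cong\;R^1f'_{*}(\mathcal{U}_{n-1}^{\vee})\otimes M^{\vee}\;=\;\mathrm{End}(M),
\]
the first isomorphism being adjunction together with $H^1(X_s,-)=R^1f'_{*}(-)$, the second the projection formula (legitimate because $M^{\vee}$ is a constant sheaf).

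Next, recall the standard dictionary: this short exact sequence is the same datum as a distinguished triangle $\mathcal{U}_{n-1}^{\vee}\to \mathcal{U}_n^{\vee}\to f'^{*}M\overset{e'}{\rightarrow}\mathcal{U}_{n-1}^{\vee}[1]$ in the derived category of $\mathbb{Q}_l$-sheaves on $X_s$, and the connecting homomorphism $\delta$ of the associated long exact sequence of $Rf'_{*}$ is $H^0$ of the induced map $Rf'_{*}(e')\colon Rf'_{*}(f'^{*}M)\to (Rf'_{*}\mathcal{U}_{n-1}^{\vee})[1]$, under the identifications $f'_{*}(f'^{*}M)=H^0(Rf'_{*}f'^{*}M)$ and $R^1f'_{*}(\mathcal{U}_{n-1}^{\vee})=H^0\big((Rf'_{*}\mathcal{U}_{n-1}^{\vee})[1]\big)$. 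Finally, the identification $f'_{*}(f'^{*}M)\cong M$ used in the statement comes from the unit $M\to Rf'_{*}f'^{*}M$ of the adjunction $(f'^{*},Rf'_{*})$, which on $H^0$ is the canonical isomorphism $M\overset{\sim}{\rightarrow}f'_{*}f'^{*}M$ --- the identity, because $f'$ is proper and geometrically connected (the fact already used in the proof of Proposition~\ref{4counit}). Composing, $\delta$ becomes exactly the image of $e'$ under the adjunction isomorphism $\mathrm{Hom}_{D(X_s)}(f'^{*}M,\mathcal{U}_{n-1}^{\vee}[1])\cong \mathrm{Hom}_{\mathbb{Q}_l}(M,R^1f'_{*}\mathcal{U}_{n-1}^{\vee})=\mathrm{End}(M)$, which one checks coincides with the chain of isomorphisms above; hence $\delta=\mathrm{id}_M$.

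I expect the only genuine work to lie in that compatibility check: that the chain used to pin down the extension class defining $\mathcal{U}_n$ (through $\mathrm{Ext}^1(\mathbb{Q}_l,-)$, then $H^1(X_s,-)=R^1f'_{*}(-)$, then the projection formula, then $\mathrm{End}$) agrees with the derived-adjunction identification through which $\delta$ is read off. This is routine but slightly tedious homological bookkeeping, comparing two natural isomorphisms assembled from adjunction and the projection formula, together with fixing the sign conventions in the connecting homomorphism and in the duality $\mathrm{Ext}^1(\mathcal{U}_{n-1},V)\cong\mathrm{Ext}^1(V^{\vee},\mathcal{U}_{n-1}^{\vee})$ (with $V:=f'^{*}(R^1f'_{*}(\mathcal{U}_{n-1}^{\vee}))^{\vee}$) consistently --- it is enough to observe that $\delta=\pm\mathrm{id}_M$ and to absorb the sign, if any, into the choice made in defining $\mathcal{U}_n$. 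Conceptually, though, the statement is a tautology forced by the construction of $\mathcal{U}_n$.
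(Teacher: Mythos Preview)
Your proof is correct and follows essentially the same approach as the paper: both identify the connecting homomorphism with the extension class of the dual sequence via the standard dictionary, and then invoke the construction of $\mathcal{U}_n$ to conclude that this class is the identity. The paper states the key general fact (``the extension class under $\mathrm{Ext}(f'^{*}\mathcal{V},\mathcal{E})\cong \mathrm{Hom}(\mathcal{V},R^1f'_{*}\mathcal{E})$ is the connecting homomorphism'') in classical language rather than your derived-categorical formulation, and does not fuss over signs, but the content is the same.
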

\begin{proof}
The element of
\begin{equation} 
\mathrm{Ext}(f'^{*}(R^{1}f'_{*}(\mathcal{U}_{n-1}^{\vee})),\mathcal{U}_{n}^{\vee})
\cong \mathrm{End}(R^{1}f'_{*}(\mathcal{U}_{n}^{\vee}))
\end{equation}
defined by the extension 
\begin{equation}
0 \rightarrow \mathcal{U}_{n-1}^{\vee} \rightarrow \mathcal{U}_{n}^{\vee}
\rightarrow f'^{*}(R^{1}f'_{*}(\mathcal{U}_{n-1}^{\vee})) \rightarrow 0
\end{equation}
is the identity.

From the fact that, for an extension
$0 \rightarrow \mathcal{E} \rightarrow \mathcal{F} \rightarrow f'^{*}\mathcal{V} \rightarrow 0$ of a trivial smooth
$\mathbb{Q}_{l}$-sheaf $f'^{*}\mathcal{V}$ by $\mathcal{E}$, the extension class under the isomorphism
\begin{equation}
\mathrm{Ext}(f'^{*}\mathcal{V},\mathcal{E}) \cong \mathcal{V}^{\vee}\otimes R^{1}f'_{*}(\mathcal{E})
\cong \mathrm{Hom}(\mathcal{V},R^{1}f'_{*}(\mathcal{E}))
\end{equation}
is nothing but the connecting homomorphism for the long exact sequence
\begin{equation}
0 \rightarrow f'_{*}(\mathcal{E}) \rightarrow f'_{*}(\mathcal{F}) \rightarrow \mathcal{V}
\rightarrow R^{1}f'_{*}(\mathcal{E}),
\end{equation}
the lemma follows.
\end{proof}

\bigskip
 In particular, any extension of $\mathcal{U}_{n-1}$ by a trivial smooth $\mathbb{Q}_{l}$-sheaf
$\mathcal{V}$ is split after pulling back to $\mathcal{U}_{n}$, and
$f'_{*}(\mathcal{U}_{n-1}^{\vee}) \cong f'_{*}(\mathcal{U}_{n}^{\vee})$. We get inductively the isomorphisms
$\mathbb{Q}_{l} \cong f'_{*}(\mathcal{U}_{1}^{\vee}) \cong f'_{*}(\mathcal{U}_{n}^{\vee})$ 
for all $n$.

Let $x = p'(s)$, $u_{1} = 1 \in (\mathcal{U}_{1})_{x} \cong \mathbb{Q}_{l}$, and choose an element $u_{n} \in (\mathcal{U}_{n})_{x}$ 
for $n$ inductively so that $(\mathcal{U}_{n})_{x} \rightarrow (\mathcal{U}_{n-1})_{x}$ sends
$u_{n}$ to $u_{n-1}$.

\begin{prop}
Let $\mathcal{F} \in U\text{Et}_{l}(X_{s})$ be an object of unipotent class $\leq m$ with respect to $(f',r)$ and $n \geq  m$.
Then for any $v \in \mathcal{F}_{x}$, there exists a morphism $\alpha : \mathcal{U}_{n} \rightarrow \mathcal{F}$
which send $u_{n}$ to $v$.
\end{prop}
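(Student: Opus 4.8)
The plan is to prove the proposition by induction on the unipotent class $m$ of $\mathcal{F}$, imitating Lazda's argument in [Laz] 1.2, with the tower $\{\mathcal{U}_{n}\}_{n}$ playing the role of a pro-universal object from which morphisms to unipotent sheaves can be built one filtration step at a time.

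For the base case $m = 1$, the object $\mathcal{F}$ lies in the essential image of $f'^{*}\colon \text{Et}_{l}^{\leq r}(s) \rightarrow U\text{Et}_{l}(X_{s})$; since $s$ is the spectrum of a separably closed field, every object of $\text{Et}_{l}^{\leq r}(s)$ is a finite direct sum of copies of $\mathbb{Q}_{l}$, so $\mathcal{F}$ is a trivial smooth $\mathbb{Q}_{l}$-sheaf and $\mathrm{Hom}(\mathbb{Q}_{l},\mathcal{F}) \cong f'_{*}\mathcal{F} \cong \mathcal{F}_{x}$ via the fibre functor at $x$. I would then take the morphism $\mathcal{U}_{1} = \mathbb{Q}_{l} \rightarrow \mathcal{F}$ corresponding to $v$ and precompose it with the canonical surjection $\mathcal{U}_{n} \twoheadrightarrow \mathcal{U}_{1}$, which sends $u_{n}$ to $u_{1} = 1$ by the choice of the $u_{n}$; the composite $\alpha$ sends $u_{n}$ to $v$.

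For the inductive step $m \geq 2$, fix an exact sequence $0 \rightarrow \mathcal{V} \rightarrow \mathcal{F} \overset{q}{\rightarrow} \mathcal{F}' \rightarrow 0$ with $\mathcal{F}'$ of unipotent class $\leq m-1$ and $\mathcal{V}$ of unipotent class $\leq 1$, hence --- again because the base of $X_{s}$ is separably closed --- a trivial smooth $\mathbb{Q}_{l}$-sheaf. Given $v \in \mathcal{F}_{x}$, set $v' := q_{x}(v) \in \mathcal{F}'_{x}$. Since $n-1 \geq m-1$, the induction hypothesis produces $\alpha'\colon \mathcal{U}_{n-1} \rightarrow \mathcal{F}'$ with $\alpha'(u_{n-1}) = v'$. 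Pulling the sequence back along $\alpha'$ yields an extension $0 \rightarrow \mathcal{V} \rightarrow \mathcal{G} \overset{\bar{q}}{\rightarrow} \mathcal{U}_{n-1} \rightarrow 0$ of $\mathcal{U}_{n-1}$ by the trivial sheaf $\mathcal{V}$; by the observation recorded right after the lemma on the connecting homomorphism $\delta$, such an extension becomes split after pulling back along the canonical surjection $\pi\colon \mathcal{U}_{n} \rightarrow \mathcal{U}_{n-1}$, so there is $\sigma\colon \mathcal{U}_{n} \rightarrow \mathcal{G}$ with $\bar{q}\circ\sigma = \pi$. Composing $\sigma$ with the projection $\mathcal{G} \rightarrow \mathcal{F}$ from the fibre-product square defining $\mathcal{G}$ gives $\beta\colon \mathcal{U}_{n} \rightarrow \mathcal{F}$ with $q\circ\beta = \alpha'\circ\pi$. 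It then remains to correct $\beta$: from $q_{x}(\beta(u_{n})) = \alpha'(u_{n-1}) = v' = q_{x}(v)$ one gets $v - \beta(u_{n}) \in \mathcal{V}_{x} \subset \mathcal{F}_{x}$, and, exactly as in the base case, triviality of $\mathcal{V}$ furnishes $\gamma\colon \mathcal{U}_{n} \rightarrow \mathcal{V}$ (factoring through $\mathcal{U}_{n} \twoheadrightarrow \mathcal{U}_{1}$) with $\gamma(u_{n}) = v - \beta(u_{n})$. Then $\alpha := \beta + (\iota\circ\gamma)$, where $\iota\colon \mathcal{V}\hookrightarrow\mathcal{F}$, is a morphism $\mathcal{U}_{n} \rightarrow \mathcal{F}$ with $\alpha(u_{n}) = v$, which closes the induction.

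The one step carrying real content is the splitting: that every extension of $\mathcal{U}_{n-1}$ by a trivial sheaf becomes split after pullback to $\mathcal{U}_{n}$. This is precisely what the tower $\{\mathcal{U}_{n}\}$ was engineered to guarantee, and it rests on the preceding lemma ($\delta = \mathrm{id}$), equivalently on the isomorphisms $f'_{*}\mathcal{U}_{n}^{\vee} \cong f'_{*}\mathcal{U}_{n-1}^{\vee}$ together with the fact that the universal extension class generates the relevant $\mathrm{Ext}$-groups. Everything else --- identifying ``unipotent class $\leq 1$'' with ``trivial'' over the separably closed base, bookkeeping with pullbacks of short exact sequences, and tracing elements through the fibre functor at $x$ --- is routine.
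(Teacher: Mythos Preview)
Your proof is correct and follows essentially the same inductive argument as the paper's (which in turn copies Lazda's [Laz] Proposition 1.17): induct on $m$, push the target vector $v$ down to the quotient $\mathcal{F}'$, apply the induction hypothesis at level $n-1$, use the splitting of the pulled-back extension over $\mathcal{U}_{n}$ to lift, and then correct by a map into the trivial kernel $\mathcal{V}$. The only cosmetic difference is that you spell out the base case and the correction step explicitly via the factorization $\mathcal{U}_{n}\twoheadrightarrow\mathcal{U}_{1}$, whereas the paper simply invokes the induction hypothesis for $\mathcal{E}$ of unipotent class $\leq 1$.
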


\begin{proof}
We copy the proof of [Laz] Proposition 1.17. Let $\mathcal{F}$ be of unipotent class $\leq m$.
 To show the proposition, we use induction on $m$. The case $m = 1$ is trivial. For the case $m \geq 2$, choose an exact sequence
\begin{equation}
0 \rightarrow \mathcal{E} \overset{\psi}{\rightarrow} \mathcal{F}
\overset{\phi}{\rightarrow} \mathcal{G} \rightarrow 0,
\end{equation}
with $\mathcal{G}$ of unipotent class $\leq m-1$ and $\mathcal{E}$ of unipotent class $\leq 1$. By induction hypothesis, there exists a morphism
$\beta:\mathcal{U}_{n-1} \rightarrow \mathcal{G}$ such that
$\phi _{x}(v) = \beta _{x}(u_{n-1})$. Consider the following pull-back exact sequences of the above extension
with respect to $\mathcal{U}_{n} \overset{\text{nat}}{\rightarrow} \mathcal{U}_{n-1} \overset{\beta}{\rightarrow} \mathcal{G}$:

\[
\xymatrix{
0 \ar[r] & \mathcal{E} \ar[r] \ar@{=}[d] & \mathcal{F''} \ar[r] \ar[d] & \mathcal{U}_{n} \ar[d] \ar[r]
\ar@<2.mm>@{.>}[l] & 0 \\ 
0 \ar[r] & \mathcal{E} \ar[r] \ar@{=}[d] & \mathcal{F'} \ar[r] \ar[d] & \mathcal{U}_{n-1} \ar[d] \ar[r] & 0 \\ 
0 \ar[r] & \mathcal{E} \ar[r] & \mathcal{F} \ar[r] & \mathcal{G} \ar[r] & 0.
}
\]
As explained above, the extension of $\mathcal{U}_{n}$ by $\mathcal{E}$ split.
Fix a section $\mathcal{U}_{n} \rightarrow \mathcal{F''}$ and let us denote the induced morphism by
$\gamma : \mathcal{U}_{n} \rightarrow \mathcal{F}$. Then
$\phi _{x}(\gamma _{x}(u_{n}) - v) = 0$. By induction hypothesis, there exists
$\gamma ':\mathcal{U} _{n} \rightarrow \mathcal{E}$ such that $\gamma_{x} '(u_{n}) = \gamma _{x}(u_{n}) - v$.
Then, $\gamma - \psi \circ \gamma '$ satisfies the condition required for $\alpha$.
\end{proof}

\begin{cor}
For all $\mathcal{E} \in U\text{Et}_{l}(X_{s})$, there exists a surjective homomorphism
$\mathcal{U}_{m}^{\oplus N} \rightarrow \mathcal{E}$ for some $m,N \in \mathbb{N}$.
\label{4canounip}\end{cor}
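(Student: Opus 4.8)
The plan is to deduce the Corollary from the preceding Proposition by the usual argument producing a surjection onto a given object from a direct sum of the "universal" objects $\mathcal{U}_{m}$, the point being that the fibre functor $\mathcal{F} \mapsto \mathcal{F}_{x}$ on $\text{Et}_{l}(X_{s})$ (with $x = p'(s)$ as above) is exact and faithful and $X_{s}$ is connected, so a morphism of smooth $\mathbb{Q}_{l}$-sheaves on $X_{s}$ is an epimorphism as soon as it is surjective on the fibre at $x$.

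First I would check that every object $\mathcal{E}$ of $U\text{Et}_{l}(X_{s})$ has finite unipotent class with respect to $(f',r)$, so that the preceding Proposition applies to it. Since $f' : X_{s} \to s$ is the structure morphism, $\text{Et}_{l}^{\leq r}(s)$ is the category of finite-dimensional $\mathbb{Q}_{l}$-vector spaces and $f'^{*}$ lands in the trivial sheaves, so by the remark following Definition~\ref{3tannaka} we have $U\text{Et}_{l}(X_{s}) = U_{f'}\text{Et}_{l}^{\leq r}(X_{s})$. The full subcategory of objects of finite unipotent class with respect to $(f',r)$ contains the trivial sheaf $\mathbb{Q}_{l}$ and — as one checks directly on the defining filtrations, using that a sub- or quotient sheaf of a trivial smooth $\mathbb{Q}_{l}$-sheaf on the connected scheme $X_{s}$ is again trivial, and that duals and tensor products of iterated extensions of trivial sheaves are again of that form with controlled length — is closed under subquotients, tensor products, duals, and extensions. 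Hence it contains the minimal such subcategory $U\text{Et}_{l}(X_{s})$, so $\mathcal{E}$ has unipotent class $\leq m$ for some $m$.

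Given such an $m$, set $N = \mathrm{rank}\,\mathcal{E} = \dim_{\mathbb{Q}_{l}}\mathcal{E}_{x}$ and choose a $\mathbb{Q}_{l}$-basis $v_{1},\dots,v_{N}$ of $\mathcal{E}_{x}$. Applying the preceding Proposition with $n = m$, for each $i$ we get a morphism $\alpha_{i} : \mathcal{U}_{m} \to \mathcal{E}$ with $(\alpha_{i})_{x}(u_{m}) = v_{i}$. Let $\alpha = (\alpha_{1},\dots,\alpha_{N}) : \mathcal{U}_{m}^{\oplus N} \to \mathcal{E}$. The image of $\alpha_{x}$ contains every $v_{i}$ (put $u_{m}$ in the $i$-th summand and $0$ elsewhere), hence contains all of $\mathcal{E}_{x}$, so $\alpha_{x}$ is surjective; by the exactness and faithfulness of the fibre functor at $x$ it follows that $\alpha$ is surjective, which proves the Corollary (with $m$ the unipotent class of $\mathcal{E}$ and $N$ its rank). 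The only step that requires any care is the verification in the previous paragraph that an arbitrary object of $U\text{Et}_{l}(X_{s})$ has finite unipotent class; once this is in hand, everything else is formal, so I do not anticipate a genuine obstacle.
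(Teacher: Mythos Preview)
Your proof is correct and follows essentially the same approach as the paper: take a basis of $\mathcal{E}_{x}$, apply the preceding Proposition to hit each basis vector, and conclude surjectivity from surjectivity on the fibre. You are more careful than the paper in explicitly verifying that every object of $U\text{Et}_{l}(X_{s})$ has finite unipotent class (the paper's one-line proof leaves this implicit), but this is a detail the paper takes for granted rather than a different route.
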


\begin{proof} The assertion follows immediately from the proposition if we take a basis for $\mathcal{E}_{x}$.
\end{proof}

\bigskip
We will define $\mathcal{W}_{n} \in U_{f}\text{Et}_{l}^{\leq r}(X)$ whose restriction to $X_{s}$ is isomorphic to $\mathcal{U}_{n}$ inductively.
Moreover, $f_{*} \mathcal{W}_{n}$ will be defined so that $f_{*}(\mathcal{W}_n^{\vee}) \cong \mathbb{Q}_{l}$
and there exists a morphism $\varepsilon_{n} : p^{*}\mathcal{W}_{n}^{\vee} \rightarrow \mathbb{Q}_{l}$
such that the composite
$\mathbb{Q}_{l} \cong f_{*}(\mathcal{W}_{n}^{\vee}) \cong p^{*}f^{*}f_{*}(\mathcal{W}_{n}^{\vee})
\rightarrow p^{*}\mathcal{W}_{n}^{\vee} \overset{\varepsilon_{n}}{\rightarrow} \mathbb{Q}_{l}$
is an isomorphism. 

 We start the induction with $\mathcal{W}_{1} = \mathbb{Q}_{l}$. Let us assume that $\mathcal{W}_{n}$ is defined.
Then, we will define $\mathcal{W}_{n+1}$ to be an extension of $\mathcal{W}_{n}$
by the sheaf $f^{*}R^{1}f_{*}(\mathcal{W}_{n}^{\vee})^{\vee}$ so that the inverse image of the exact sequence
\begin{equation}
0 \rightarrow f^{*}(R^{1}f_{*}(\mathcal{W}_{n}^{\vee}))^{\vee} \rightarrow \mathcal{W}_{n+1}
\rightarrow \mathcal{W}_{n} \rightarrow 0
\end{equation}
to $X_{s}$ is isomorphic to
\begin{equation}
0 \rightarrow f'^{*}(R^{1}f'_{*}(\mathcal{U}_{n}^{\vee}))^{\vee} \rightarrow \mathcal{U}_{n+1}
\rightarrow \mathcal{U}_{n} \rightarrow 0.
\end{equation}
Now consider the extension group
\begin{eqnarray}
\mathrm{Ext}(\mathcal{W}_{n},f^{*}(R^{1}f_{*}(\mathcal{W}_{n}^{\vee}))^{\vee})
\cong H^{1}(X,\mathcal{W}_{n}^{\vee} \otimes f^{*}(R^{1}f_{*}(\mathcal{W}_{n}^{\vee}))^{\vee}).
\label{eq:4ec}\end{eqnarray}
 Let us denote $\mathcal{W}_{n}^{\vee} \otimes f^{*}(R^{1}f_{*}(\mathcal{W}_{n}^{\vee}))^{\vee}$ as $\mathcal{E}$.
The Leray spectral sequence for $\mathcal{E}$ associated to $f: X \rightarrow S$ gives us the 5-term exact sequence
\begin{equation}
0 \rightarrow H^{1}(S,f_{*}(\mathcal{E})) \rightarrow H^{1}(X,\mathcal{E})
\rightarrow H^{0}(S,R^{1}f_{*}(\mathcal{E})) \rightarrow H^{2}(S,f_{*}(\mathcal{E}))
\rightarrow H^{2}(X,\mathcal{E}).
\end{equation}
After rewriting the objects in the above exact sequence by projection formulas,
the isomorphism (\ref{eq:4ec}) and induction hypothesis, we obtain the following exact sequence
 \begin{eqnarray}\begin{split}
0 
&\rightarrow H^{1}(S,(R^{1}f_{*}(\mathcal{W}_{n}^{\vee}))^{\vee}) 
\rightarrow \mathrm{Ext}(\mathcal{W}_{n},f^{*}(R^{1}f_{*}(\mathcal{W}_{n}^{\vee}))^{\vee}) \\
&\rightarrow \mathrm{End}(R^{1}f_{*}(\mathcal{W}_{n}^{\vee})) 
\rightarrow H^{2}(S,R^{1}f_{*}(\mathcal{W}_{n}^{\vee})^{\vee}) \\
&\rightarrow H^{2}(X,\mathcal{W}_{n}^{\vee} \otimes (R^{1}f_{*}(\mathcal{W}_{n}^{\vee}))^{\vee}).
\end{split}\label{eq:45t}\end{eqnarray}

The isomorphism
$\mathbb{Q}_{l} \cong f_{*}(\mathcal{W}_{n}^{\vee}) \cong p^{*}f^{*}f_{*}(\mathcal{W}_{n}^{\vee})
\rightarrow p^{*}\mathcal{W}_{n}^{\vee} \overset{\varepsilon_{n}}{\rightarrow} \mathbb{Q}_{l}$
induces an isomorphism

\begin{eqnarray}\begin{split}
H^{i}(S,(R^{1}f_{*}(\mathcal{W}_{n}^{\vee}))^{\vee})
&\cong H^{i}(S, f_{*}(\mathcal{W}_{n}^{\vee}) \otimes (R^{1}f_{*}(\mathcal{W}_{n}^{\vee}))^{\vee}) \\
&\rightarrow H^{i}(S,p^{*}\mathcal{W}_{n}^{\vee} \otimes (R^{1}f_{*}(\mathcal{W}_{n}^{\vee}))^{\vee})
\rightarrow H^{i}(S,(R^{1}f_{*}(\mathcal{W}_{n}^{\vee}))^{\vee}).
\end{split}\label{eq:4rt1} \end{eqnarray}

Since this is the same as the following isomorphism

\begin{eqnarray}\begin{split}
H^{i}(S,(R^{1}f_{*}(\mathcal{W}_{n}^{\vee}))^{\vee})
&\rightarrow H^{i}(X, \mathcal{W}_{n}^{\vee} \otimes f^{*}(R^{1}f_{*}(\mathcal{W}_{n}^{\vee}))^{\vee}) \\
&\rightarrow H^{i}(S,p^{*}\mathcal{W}_{n}^{\vee} \otimes (R^{1}f_{*}(\mathcal{W}_{n}^{\vee}))^{\vee})
\rightarrow H^{i}(S,(R^{1}f_{*}(\mathcal{W}_{n}^{\vee}))^{\vee}),
\end{split}\label{eq:4rt2}\end{eqnarray}
the homomorphisms
$$H^{1}(S,(R^{1}f_{*}(\mathcal{W}_{n}^{\vee}))^{\vee}) 
\rightarrow \mathrm{Ext}(\mathcal{W}_{n},f^{*}(R^{1}f_{*}(\mathcal{W}_{n}^{\vee}))^{\vee}) $$
$$H^{2}(S,(R^{1}f_{*}(\mathcal{W}_{n}^{\vee}))^{\vee}) 
\rightarrow H^{2}(X,\mathcal{W}_{n}^{\vee} \otimes (R^{1}f_{*}(\mathcal{W}_{n}^{\vee}))^{\vee})$$
in the exact sequence (\ref{eq:45t}) split. 
Therefore the morphism $\mathrm{Ext}(\mathcal{W}_{n},f^{*}(R^{1}f_{*}(\mathcal{W}_{n}^{\vee}))^{\vee}) 
\rightarrow \mathrm{End}(R^{1}f_{*}(\mathcal{W}_{n}^{\vee}))$ have a unique section corresponding to the retraction, so
in the commutative diagram
\[
\xymatrix{
\mathrm{Ext}(\mathcal{W}_{n},f^{*}(R^{1}f_{*}(\mathcal{W}_{n}^{\vee}))^{\vee}) \ar[r] \ar[d]
& \mathrm{Ext}(\mathcal{U}_{n},f'^{*}(R^{1}f'_{*}(\mathcal{U}_{n}^{\vee}))^{\vee}) \ar@{=}[d] \\ 
\mathrm{End}(R^{1}f_{*}(\mathcal{W}_{n}^{\vee})) \ar[r]
& \mathrm{End}(R^{1}f'_{*}(\mathcal{U}_{n}^{\vee})),
}
\]
 the element $\text{id} \in \mathrm{End}(R^{1}f_{*}(\mathcal{W}_{n}^{\vee}))$
canonically lifts to $\mathrm{Ext}(\mathcal{W}_{n},f^{*}(R^{1}f_{*}(\mathcal{W}_{n}^{\vee}))^{\vee})$
by the section.
Then, $\mathcal{W}_{n+1}$ is defined to be the extension of $\mathcal{W}_{n}$
by $f^{*}(R^{1}f_{*}(\mathcal{W}_{n}^{\vee}))^{\vee}$ corresponding to this element.
Since this is sent to $\text{id} \in \mathrm{End}(R^{1}f'_{*}(\mathcal{U}_{n}^{\vee}))$, which corresponds to
the extension class of $\mathcal{U}_{n+1}$, we have natural isomorphism
$i_{s}^{*}\mathcal{W}_{n+1} \cong \mathcal{U}_{n+1}$. 

 To complete the induction, it is sufficient to show that $f_{*}(\mathcal{W}_{n+1}^{\vee}) \cong f_{*}(\mathcal{W}_{n}^{\vee})$
and that there exists a morphism $p^{*}\mathcal{W}_{n+1}^{\vee} \rightarrow \mathbb{Q}_{l}$
as in the induction hypothesis. By taking fibers at $s$ and proper base change theorem, we can prove the first claim.
For the second, we consider the following exact sequences
\[
\xymatrix{
0 \ar[r] & p^{*}\mathcal{W}_{n}^{\vee} \ar[r] \ar[d] & p^{*}\mathcal{W}_{n+1}^{\vee} \ar[r] \ar[d]
& R^{1}f_{*}(\mathcal{W}_{n}^{\vee}) \ar@{=}[d] \ar[r] & 0 \\ 
0 \ar[r] & \mathbb{Q}_{l} \ar[r] & (\text{pushout}) \ar[r] \ar@<2.mm>@{.>}[l]
& R^{1}f_{*}(\mathcal{W}_{n}^{\vee}) \ar[r] & 0,
}
\]
where the left vertical arrow is $\varepsilon _{n}$. Then, the lower exact sequence splits since the following diagram
\[
\xymatrix{
\mathrm{Ext}(R^{1}f_{*}(\mathcal{W}_{n}^{\vee})),p^{*}\mathcal{W}_{n}^{\vee}) \ar[d]
& \mathrm{Ext}(\mathcal{W}_{n},f^{*}(R^{1}f_{*}(\mathcal{W}_{n}^{\vee})^{\vee}) \ar[l] \ar[d] \\ 
\mathrm{Ext}(R^{1}f_{*}(\mathcal{W}_{n}^{\vee})),\mathbb{Q}_{l}) & H^{1}(S,R^{1}f_{*}(\mathcal{W}_{n}^{\vee})) \ar@{=}[l] 
}
\]
commutes and the right vertical arrow send the extension class defined by $\mathcal{W}_{n+1}$ to $0$.
Fixing its retraction, we have a homomorphism $p^{*}\mathcal{W}^{\vee}_{n+1} \rightarrow \mathbb{Q}_{l}$
such that the composition $p^{*}\mathcal{W}^{\vee}_{n} \rightarrow p^{*}\mathcal{W}^{\vee}_{n+1} \rightarrow \mathbb{Q}_{l}$ is equal to $\varepsilon _{n}$.
Now the second assertion follows immediately from the fact that the diagram
\[
\xymatrix{
 & f_{*}\mathcal{W}_{n+1}^{\vee} \ar[r] & p^{*}f^{*}f_{*}\mathcal{W}_{n+1}^{\vee} \ar[r] & p^{*}\mathcal{W}_{n+1}^{\vee} \ar[rd] \\ 
\mathbb{Q}_{l} \ar[r] \ar[ru] & f_{*}\mathcal{W}_{n}^{\vee} \ar[r] \ar[u] & p^{*}f^{*}f_{*}\mathcal{W}_{n}^{\vee} \ar[r] \ar[u] 
& p^{*}\mathcal{W}_{n}^{\vee} \ar[r] \ar[u] & \mathbb{Q}_{l}
}
\]
commutes.

 These arguments and Corollary \ref{4canounip} show the following proposition, which is the assertion (C).
\begin{prop}
For all $\mathcal{F} \in U\text{Et}_{l}(X_{s})$, there exists $\mathcal{E} \in U_{f}\text{Et}_{l}^{\leq r}(X)$ and a surjective homomorphism
$i_{s}^{*}\mathcal{E} \rightarrow \mathcal{F}$.
\end{prop}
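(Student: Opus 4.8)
The plan is to deduce this proposition (assertion (C)) directly from the constructions already carried out, namely the objects $\mathcal{U}_{n} \in U\text{Et}_{l}(X_{s})$, the objects $\mathcal{W}_{n} \in U_{f}\text{Et}_{l}^{\leq r}(X)$ satisfying $i_{s}^{*}\mathcal{W}_{n} \cong \mathcal{U}_{n}$, and Corollary \ref{4canounip}, which says that every $\mathcal{F} \in U\text{Et}_{l}(X_{s})$ admits a surjection $\mathcal{U}_{m}^{\oplus N} \twoheadrightarrow \mathcal{F}$ for suitable $m, N$. So essentially there is nothing left to do except assemble these pieces.

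Concretely, given $\mathcal{F} \in U\text{Et}_{l}(X_{s})$, first apply Corollary \ref{4canounip} to obtain $m, N \in \mathbb{N}$ and a surjective homomorphism $q : \mathcal{U}_{m}^{\oplus N} \twoheadrightarrow \mathcal{F}$ in $U\text{Et}_{l}(X_{s})$. Then set $\mathcal{E} := \mathcal{W}_{m}^{\oplus N} \in U_{f}\text{Et}_{l}^{\leq r}(X)$; note $U_{f}\text{Et}_{l}^{\leq r}(X)$ is closed under finite direct sums since it is closed under extensions, so $\mathcal{E}$ indeed lies there. By the inductive construction of $\mathcal{W}_{m}$ we have a chosen isomorphism $i_{s}^{*}\mathcal{W}_{m} \cong \mathcal{U}_{m}$, hence $i_{s}^{*}\mathcal{E} = i_{s}^{*}(\mathcal{W}_{m}^{\oplus N}) \cong \mathcal{U}_{m}^{\oplus N}$ because $i_{s}^{*}$ is an exact (in particular additive) tensor functor. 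Composing this isomorphism with $q$ gives the desired surjection $i_{s}^{*}\mathcal{E} \twoheadrightarrow \mathcal{F}$.

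The only mild subtlety — not really an obstacle — is to confirm that the surjectivity of a morphism of smooth $\mathbb{Q}_{l}$-sheaves is preserved under pulling back along $i_{s}$ and under forming finite direct sums; both hold because $i_{s}^{*}$ is an exact functor between abelian ($\mathbb{Q}_{l}$-linear Tannakian) categories, so it preserves epimorphisms, and a finite direct sum of epimorphisms is an epimorphism. One should also double-check that $\mathcal{U}_{m}$ has unipotent class $\leq m$ with respect to $(f', r)$, which is immediate from its construction as an iterated extension and is exactly what Corollary \ref{4canounip}'s proof used, so all the hypotheses line up. Since assertions (A), (B), (C) are, as noted after the statement of Theorem \ref{4hes}, sufficient to conclude that the morphism \eqref{eq:3lama} is an isomorphism, this proposition completes the proof of Theorem \ref{4hes}.
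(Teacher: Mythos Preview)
Your proposal is correct and is exactly the argument the paper has in mind: the paper simply writes ``These arguments and Corollary \ref{4canounip} show the following proposition,'' meaning precisely that one takes $\mathcal{E} = \mathcal{W}_{m}^{\oplus N}$ and composes the isomorphism $i_{s}^{*}\mathcal{W}_{m}^{\oplus N} \cong \mathcal{U}_{m}^{\oplus N}$ with the surjection of Corollary \ref{4canounip}. Your added remarks on closure under direct sums and exactness of $i_{s}^{*}$ are routine checks that the paper leaves implicit.
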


\section{Extension of smooth $\mathbb{Q}_{l}$-sheaves}

In this section, we prove the results similar to \cite{Dri} 5.1 when the base scheme is a discrete valuation ring.
In \cite{Dri}, they are proved when the base scheme is $\mathbb{Z}$ and we check that they remain valid also in our situation.

 Throughout this section, $K$ is a discrete valuation field with valuation ring $O _{K}$ and residue field $k$
of characteristic $p \geq 0$.

\begin{lem}
 Let $X$ be a regular scheme of finite type and flat over $O_{K}$, $D \subset  X$ the special fiber, and let us suppose that the closed subset $D$ is an irreducible reduced divisor of $X$. 
Let $G$ be a finite group, and $\phi : Y \rightarrow X \setminus D$ a $G$-torsor ramified at $D$.
Then there exists a closed point $x \in D$ and a $1$-dimensional subspace $L$ of the tangent
space $T_{x}X \overset{\mathrm{def}}{=} (m_{x}/m^2_{x})^{*}$ with the following property:

(C) if $C \subset  X_{\bar{x}}$ is any regular $1$-dimensional closed subscheme tangent to $L$ such that
$C \not\subset D_{\bar{x}}$ then the pullback of $\phi : Y \rightarrow X \setminus D$
to $C \setminus \{ \bar{x} \}$ is ramified at $\bar{x}$.

Here $\bar{x}$ is a geometric point corresponding to $x$ and $X_{\bar{x}}$, $D_{\bar{x}}$ are the strict Henselizations.
\end{lem}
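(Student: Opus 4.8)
The plan is to follow Drinfeld's argument in \cite{Dri} 5.1: localize the ramification to the generic point of $D$, put the cover into a standard local form there, spread that form out over a dense open of $D$, and then read it off on a curve through a general closed point of $D$ that is transversal to $D$.

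First I would reduce to the case that $Y$ is connected (replace $Y$ by a connected component and $G$ by the corresponding subgroup) and pass to the normalization $\bar Y$ of $X$ in $k(Y)$: this is finite over $X$, equals $Y$ over $X \setminus D$, and is ramified over $D$. Since $X$ is regular and $\bar Y$ is normal, Zariski--Nagata purity of the branch locus shows that the branch locus of $\bar Y \to X$ is a divisor contained in $D$; as $D$ is irreducible and the branch locus is non-empty (the torsor is ramified at $D$), it equals $D$, so $\bar Y \to X$ is ramified over every point of $D$. The ramification of $\bar Y / X$ along $D$ is a codimension-one phenomenon, encoded in the extension of discrete valuation rings $\mathcal O_{\bar Y, \eta'} / \mathcal O_{X, \eta}$, where $\eta$ is the generic point of $D$ and $\eta'$ a point of $\bar Y$ over it, with inertia group $I \ne 1$. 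One decomposes this local extension into its tame and wild parts: Abhyankar's lemma controls the tame part after adjoining an $e$-th root of a local parameter of $\mathcal O_{X,\eta}$ ($e$ the tame ramification index), and Artin--Schreier--Witt theory (equivalently the Kato--Swan ramification filtration) controls the wild part, describing the wildly ramified part of $\bar Y$ over the henselization of $\mathcal O_{X, \eta}$ by equations $\wp(\mathbf y) = \mathbf f$ with $\mathbf f$ a Witt vector having a prescribed pole along $D$ of positive Swan conductor.

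The second half of the argument is geometric. Since $D$ is reduced it is regular on a dense open subset, so there is a dense open $D^{\circ} \subseteq D$ and an open neighborhood $V$ of $D^{\circ}$ in $X$ such that on $V$: the image $\pi \in \mathcal O_X$ of a uniformizer of $O_K$ is part of a regular system of parameters at each point of $D^{\circ}$, whence $T_x D = \ker(d\pi) \subset T_x X$ is a hyperplane for $x \in D^{\circ}$; and the Kummer / Artin--Schreier--Witt data describing $\bar Y$ near $\eta$ are defined over $V$, describe $\bar Y|_V$, and keep their ramification invariants along $D^{\circ}$. Now choose a closed point $x \in D^{\circ}$ and a line $L \subset T_x X$ with $L \not\subseteq T_x D$, avoiding moreover the finitely many ``bad'' directions at $x$ along which a leading coefficient or a Witt-vector component degenerates modulo $\mathfrak m_x^2$. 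For any regular one-dimensional closed subscheme $C \subseteq X_{\bar x}$ with $T_x C = L$ and $C \not\subset D_{\bar x}$, the condition $L \not\subseteq \ker(d\pi)$ forces $\pi|_C$ to be a uniformizer of the discrete valuation ring $\mathcal O_{C, \bar x}$; substituting $C$ into the normal-form equations, $\bar Y \times_X C \to C$ is again in Kummer form $y^{e} = (\text{unit})\cdot(\text{uniformizer})$ or Artin--Schreier--Witt form with a pole of the same positive order, hence is ramified at $\bar x$ -- avoiding the bad directions is exactly what prevents the leading coefficient from vanishing or the pole order from dropping. Ramification of $\bar Y \times_X C \to C$ at $\bar x$ is the same as ramification at $\bar x$ of the pullback of the $G$-torsor $\phi : Y \to X \setminus D$ to $C \setminus \{\bar x\}$, which is property (C).

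The hard part is the second step of the spreading-out -- establishing the uniform local normal form for $\bar Y$ along a dense open of $D$, and the stability of the Swan conductor under restriction to a transversal curve -- in the generality of an arbitrary discrete valuation ring: Drinfeld works over $\mathbb Z$, where $D$ is a fibre $X \otimes \mathbb F_p$ over a finite, hence perfect, residue field, while here $k$ may be imperfect and, when $\operatorname{char} K = 0 < \operatorname{char} k = p$, the ring $\widehat{\mathcal O}_{X,x}$ is genuinely of mixed characteristic; one must check that the Artin--Schreier--Witt manipulations and the reduction of pole orders still work, or replace them by the residue-field-insensitive ramification theory of Abbes--Saito. A secondary point is to confirm that ``$D$ reduced'' suffices for the generic regularity of $D$ (hence for the hyperplane description of $T_x D$), and that the reduction to connected $Y$ and the dictionary between ramified $G$-torsors and ramified covers cause no trouble. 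The remaining steps are a transcription of \cite{Dri} 5.1.
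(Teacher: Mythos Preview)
Your approach differs genuinely from the paper's (which is a direct adaptation of Drinfeld), and the paper's route sidesteps exactly the difficulty you flag at the end.

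The paper does not use Kummer or Artin--Schreier--Witt normal forms at all. After normalizing to $\bar\pi:\bar Y\to X$, it passes to $\bar Y/I$ (with $I$ the inertia group at a chosen point $\xi_Y$ over the generic point $\xi_X$ of $D$) and deletes the other components over $\xi_X$, thereby reducing to the case $G=I$; then $G$ is solvable and one reduces further to $|G|=q$ prime. After shrinking $X$ so that $\bar Y$, $\widetilde D:=(\bar\pi^{-1}D)_{\mathrm{red}}$ and $D$ are regular, the rest is pure linear algebra on tangent spaces. Writing $e_1=\deg(\widetilde D/D)$ and $e_2$ for the multiplicity of $\widetilde D$ in $\bar\pi^{-1}D$, one has $e_1e_2=q$. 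If $e_2=q$, the image of $T_y\bar Y\to T_xX$ lies in $T_xD$, so any $L\not\subset T_xD$ is transversal to it; then $C\times_X\bar Y$ is regular with a single point over $\bar x$, hence ramified. If $e_1=q$ (forcing $q=p$, the fierce case), one instead takes $L\subset T_xD$ with $L\not\subset\mathrm{Im}(T_y\widetilde D\to T_xD)$ and runs the same transversality argument. Finally one transports the conclusion from the auxiliary $X'$ back to $X$ via the \'etale map $X'\to X$.

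Two consequences. First, this argument is characteristic-free: no Artin--Schreier--Witt equations, no Swan conductors, no Abbes--Saito, so your worries about mixed characteristic and imperfect residue fields simply never arise. Second, in the fierce case the correct $L$ is \emph{tangent} to $D$, not transversal; your blanket choice $L\not\subset T_xD$ together with a pole-order argument is delicate precisely there, since the governing equation has pole order divisible by $p$ (think $y^p-y=s\,t^{-p}$ with $s$ a non-$p$-th-power along $D$), and over an algebraically closed residue field the usual substitutions can lower that pole order upon restriction to a transversal curve. So the sketch has a real gap beyond the mixed-characteristic caveat you already note; the paper's reduction to prime order plus the tangent-space transversality argument is what makes the proof go through uniformly.
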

\begin{proof}
 Let $\bar{Y}$ be the normalization of $X$ in the ring of fractions of $Y$, and $\bar{\pi} : \bar{Y} \rightarrow X$
be the canonical morphism. Let us denote the generic point of $D$ by $\xi_{X}$
and choose a ramified point $\xi_{Y}$ in $\bar{Y}$ over $\xi_{X}$.
 
 Let us consider the quotient scheme $\bar{Y}/{I}$ of $\bar{Y}$ by the inertia subgroup $I$ of the decomposition group at $\xi_{Y}$ and the open subscheme $X'$ of $\bar{Y}/{I}$ obtained by removing divisors over $\xi_{X}$ except for the image of $\xi_{Y}$.
 
First we prove the lemma in the case $X = X'$, so we can assume that the fiber $\bar{\pi}^{-1}(\xi_{X})$ is equal to $\{ \xi_{Y}\}$.
 Then $G$ is solvable and so we can assume that $|G|$ is a prime number $q$ by replacing $Y$ by $Y/H$, where $H \subset G$
is a normal subgroup of prime index.

 The extension of the rings of fractions of $Y$ and $X$ is finite separable, so $\bar{\pi}$ is a finite morphism.
Since $Y$ is finite type over $O_{K}$, its regular locus is open by \cite{EGA} Corollaire 6.12.6 and it contains $\xi_{Y}$.
To prove the lemma we can replace $X$ by any open subscheme of $X$ which contains $\xi_{X}$.  Thus we can assume that $Y$ is regular by shrinking $X$ because $\bar{\pi}$ is a closed map. 

Set $\widetilde{D}:= (\bar{\pi}^{-1}(D))_{\mathrm{red}}$.
Since $\widetilde{D}$ is integral and finite type over $O _{K}$, the regular locus of $\widetilde{D}$
is open and we may assume that $\widetilde{D}$ and $D$ are regular.
Then, we can prove the theorem for any closed point $x \in D$.
 The assumption $I = G$ means that the action of $G$
on $\widetilde{D}$ is trivial and the morphism $\bar{\pi}_{D} : \widetilde{D} \rightarrow D$
is purely inseparable. Let $e_{1}$ be its degree and let $e_{2}$ be the multiplicity of $\widetilde{D}$
in the divisor $\pi^{-1}(D)$. Then $e_{1}e_{2} = |G| = q$, so $e_{1}$ equals $1$ or $q$.

 Case 1: $e_{1} = 1$, $e_{2} = q$. Since $e_{1} = 1$, the morphism 
$\bar{\pi}_{D} : \widetilde{D} \rightarrow D$ is an isomorphism. If $L \not\subset T_{x}D$ and $C \subset X_{\bar{x}}$
is any regular 1-dimensional closed subscheme tangent to $L$, $T_{\bar{x}}C$ is transversal to the image of the tangent map
$T_{\bar{\pi}^{-1}(x)}\bar{Y} \rightarrow T_{x}X$. Since the scheme $C \times_{Y} \bar{Y}$ is regular and the set $\bar{\pi}^{-1}(\bar{x})$ is a point, the pullback of
$\pi : Y \rightarrow X \setminus D$ to $C \setminus {\bar{x}}$ is indeed ramified at $\bar{x}$.

 Case 2: $e_{1} = q$, $e_{2} = 1$. Fix any closed point $y \in \widetilde{D}$, and let $x$ be $\bar{\pi}_{D}(y)$.
If the extension of their residue fields $k(y) \subset k(x)$
is nontrivial, it is purely inseparable, so any $L$ satisfies the condition (C).
Let us assume that $k(y) \cong k(x)$. Let us denote the maximal ideal of $O_{X,x}$, $O_{D,x}$, $O_{Y,y}$, $O_{\widetilde{D},y}$ as
$m_{x}$, $n_{x}$, $m_{y}$, $n_{y}$ and choose a local defining equation
$f \in O_{X,x}$ of $D$. Since $e_{2} = 1$, $O_{Y,y} / (f) \cong O_{\widetilde{D},y}$. Then, we have the following commutative diagram with horizontal lines exact:
\[
\xymatrix{
0 \ar[r] & ((f)+m_{x}^{2})/m_{x}^{2} \ar[r] \ar[d]
& m_{x}/m_{x}^{2} \ar[r] \ar[d] & n_{x}/n_{x}^{2} \ar[d] \ar[r] & 0 \\ 
0 \ar[r] & ((f)+m_{y}^{2})/m_{y}^{2} \ar[r] & m_{y}/m_{y}^{2} \ar[r] & n_{y}/n_{y}^{2} \ar[r] & 0.
}
\]
The left vertical arrow is an isomorphism, but $m_{x}/m_{x}^{2} \rightarrow m_{y}/m_{y}^{2}$ is not an isomorphism because
$\bar{\pi}$ is not etale at $y$, so the right vertical arrow is not an isomorphism.
If we choose $L \subset T_{x}X$ with $L \subset T_{x}D$ and $L \not\subset \mathrm{Im}(T_{y}\widetilde{D} \rightarrow T_{x}D)$, we can finish the
argument just as in Case 1 since the subspace $\mathrm{Im}(T_{y}\widetilde{D} \rightarrow T_{x}D)$ of $T_{x}D$ is of codimension $1$.

So we finished the proof in the case $X = X'$.
Finally we prove the lemma in general case. By the lemma for $X'$, we can take a closed point $x' $ in the topological closure of the image of $\xi_{Y}$ in $X'$ and a 1-dimensional subspace $L'$ of $T_{x'}X'$ which satisfy the condition (C) for $X'$.
Since a closed point of the special fiber of $X'$ is a closed point of the special fiber of  $\bar{Y}/I$, if we define $x$ to be the image of $x'$ by the morphism $\bar{Y}/I \rightarrow X$, it is a closed point of $D$.
 Then, also, since $X' \rightarrow X$ is etale by the Zariski-Nagata purity theorem \cite{SGA1} X Theorem 3.4, $T_{x'}X' = T_{x}X \otimes_{k(x)} k(x')$.
 By the argument of taking $L'$ up to the previous paragraph, we see that one can take the 1-dimensional subspace $L'$ of $T_{x'}X'$ in order that it comes from a 1-dimensional subspace $L$ of $T_{x}X$. Then we see that the condition (C) is satisfied for this $x$ and $L$. So we finished the proof of the lemma in general case.

\end{proof}

\begin{cor}
Let $X$ be a scheme smooth over $O _{K}$ and $U \subset X$ the generic fiber.
Let $\mathcal{E}$ a smooth $\mathbb{Q}_{l}$-sheaf on $U$, where $l$ is a prime number not equal to the characteristic of $k$.
 Suppose that $\mathcal{E}$ does not extend to a smooth
$\mathbb{Q}_{l}$-sheaf on $X$. Then there exists a closed point $x \in X \setminus U$ and a 1-dimensional subspace
$L \subset T_{x}X$ with the following property:

(*) if $C$ is a Dedekind scheme of finite type over $O _{K}$, $c \in C$ a closed point such that the extension $k(c) \subset k(x)$ is separable, and $\varphi : (C,c) \rightarrow (X,x)$
a morphism with $\varphi ^{-1}(U) \neq \emptyset$
such that the image of the tangent map $T_{c}C \rightarrow T_{x}X\otimes _{k(x)}k(c)$
equals $L\otimes _{k(x)}k(c)$ then the pullback of $\mathcal{E}$ to $\varphi ^{-1}(U)$
is ramified at $c$.
\label{5el}
\end{cor}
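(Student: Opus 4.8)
\emph{Overview of the plan.} I would deduce the corollary from the preceding lemma in three moves: shrink $X$ so that its special fibre becomes irreducible; replace the smooth $\mathbb{Q}_{l}$-sheaf $\mathcal{E}$ by a finite torsor that is still ramified along that fibre; and then translate the conclusion (C) of that lemma into the assertion (*).

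\emph{Reductions.} First I would reduce to the case where $X$ is connected, hence integral (it is regular). Write $\mathcal{E}$ as a continuous representation $\rho\colon\pi_{1}(U)\to GL_{n}(\mathbb{Q}_{l})$; since its image is compact, after conjugating we may assume $\rho(\pi_{1}(U))\subset GL_{n}(\mathbb{Z}_{l})$, and we let $\rho_{m}\colon\pi_{1}(U)\twoheadrightarrow G_{m}\subset GL_{n}(\mathbb{Z}/l^{m})$ be the reduction modulo $l^{m}$ onto its image. Because $\mathcal{E}$ does not extend to a smooth $\mathbb{Q}_{l}$-sheaf on $X$, the representation $\rho$ does not factor through $\pi_{1}(X)$; by Zariski--Nagata purity (\cite{SGA1} X Theorem 3.4) this means that for some irreducible component $D_{0}$ of the special fibre $D=X\setminus U$, with generic point $\xi_{0}$, the inertia subgroup $I_{\xi_{0}}\subset\pi_{1}(U)$ acts nontrivially via $\rho$; fix $m$ with $\rho_{m}(I_{\xi_{0}})\neq1$. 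Set $X_{0}:=X\setminus\bigcup_{i\neq0}D_{i}$ ($D_{i}$ the other components of $D$): it is open in $X$, hence regular, flat and of finite type over $O_{K}$, its generic fibre is again $U$, and its special fibre is the irreducible reduced divisor $D_{0}\cap X_{0}$ with generic point $\xi_{0}$ (the reducedness uses that $X$ is smooth over $O_{K}$). The open subgroup $\ker\rho_{m}$ defines a connected $G_{m}$-torsor $\phi\colon Y\to U=X_{0}\setminus D_{0}$ which, since $\rho_{m}(I_{\xi_{0}})\neq1$, is ramified at the special fibre of $X_{0}$; so the lemma applies to $(X_{0},G_{m},\phi)$ and produces a closed point $x\in D_{0}\cap X_{0}\subset X\setminus U$ together with a line $L\subset T_{x}X_{0}=T_{x}X$ satisfying property (C).

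\emph{Verifying (*).} I claim this $x$ and $L$ satisfy (*). Given $C$, $c$, $\varphi\colon(C,c)\to(X,x)$ as in (*), I may assume $C$ is integral (pass to the component of $c$), so that the generic point of $C$ maps into $U$; suppose for contradiction that the pullback of $\mathcal{E}$ to $\varphi^{-1}(U)$ is unramified at $c$. Reducing modulo $l^{m}$, the pullback of the torsor $Y$ to $\varphi^{-1}(U)$ is then also unramified at $c$. Now, since $k(c)/k(x)$ is separable, choosing $k(x)^{\mathrm{sep}}=k(c)^{\mathrm{sep}}$ the morphism $\varphi$ induces $\tilde\varphi\colon\mathrm{Spec}\,\mathcal{O}_{C,c}^{\mathrm{sh}}\to\mathrm{Spec}\,\mathcal{O}_{X_{0},x}^{\mathrm{sh}}=(X_{0})_{\bar x}$, with trivial residue extension at the closed point. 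The hypothesis that the tangent map of $\varphi$ at $c$ has image $L\otimes_{k(x)}k(c)$ gives, after strict henselization, that the tangent map of $\tilde\varphi$ at $\bar x$ is injective with image $L\otimes_{k(x)}k(x)^{\mathrm{sep}}$; hence $\tilde\varphi$ is unramified, and since $(X_{0})_{\bar x}$ is strictly henselian local with trivial residue extension and $\tilde\varphi$ has local source, $\tilde\varphi$ is a closed immersion (structure theory of unramified morphisms; \cite{EGA} IV 18.4.7). Let $C'\subset(X_{0})_{\bar x}$ be its image: it is a regular $1$-dimensional closed subscheme with closed point $\bar x$, tangent to $L$ (its tangent space is $L\otimes_{k(x)}k(x)^{\mathrm{sep}}$), and not contained in $(D_{0})_{\bar x}$ (its generic point maps into $U$, because the generic point of $C$ does). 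The pullback of $\phi$ to $C'\setminus\{\bar x\}=\mathrm{Spec}\,\mathrm{Frac}(\mathcal{O}_{C,c}^{\mathrm{sh}})$ coincides with the pullback of $Y$ along $\mathrm{Spec}\,\mathrm{Frac}(\mathcal{O}_{C,c}^{\mathrm{sh}})\to\varphi^{-1}(U)\xrightarrow{\varphi}U$, which is unramified at $\bar x$. This contradicts property (C) for $x$ and $L$; therefore the pullback of $\mathcal{E}$ to $\varphi^{-1}(U)$ is ramified at $c$, which is (*).

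\emph{Expected main difficulty.} The delicate point is the passage, in the last paragraph, from the abstract germ $\varphi\colon(C,c)\to(X,x)$ to an honest regular closed curve in the strict henselization tangent to $L$, to which the lemma applies verbatim. This is exactly where the tangency hypothesis and the separability of $k(c)/k(x)$ are needed, through the fact that an unramified morphism to a strictly henselian local scheme with trivial residue extension and local source is a closed immersion; making this rigorous, and in particular checking that the various strict henselizations are compatible, is the part that will require the most care. The remaining ingredients --- reducing $\mathcal{E}$ to a finite torsor by mod-$l^{m}$ reduction, and the detection of ramification of $\mathcal{E}$ along the special fibre by inertia --- are routine consequences of Zariski--Nagata purity.
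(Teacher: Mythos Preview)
Your proof is correct and follows the same strategy as the paper's: use Zariski--Nagata purity to locate an irreducible divisor of the special fibre along which $\mathcal{E}$ ramifies, then invoke the preceding lemma. The paper's own proof is a two-line sketch (``By the Zariski--Nagata purity theorem, $\mathcal{E}$ is ramified along some irreducible divisor $D \subset X$, $D \cap U = \emptyset$. Now use the above lemma.''), so your reduction of $\mathcal{E}$ to a finite $G_{m}$-torsor, your shrinking of $X$ to make the special fibre irreducible, and your translation of condition~(C) into~(*) via strict henselization are precisely the details the paper leaves implicit.
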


\begin{proof}
By the Zariski-Nagata purity theorem \cite{SGA1} X Theorem 3.4, $\mathcal{E}$ is ramified along some
irreducible divisor $D \subset X$, $D \cap U = \emptyset$. Now use the above lemma.
\end{proof}

\bigskip
 We can also show the above corollary for an ind-smooth $\mathbb{Q}_{l}$-sheaf $\mathcal{E}$ on $U$.
In fact, assume that $\mathcal{E}$ does not extend to an ind-smooth $\mathbb{Q}_{l}$-sheaf on $X$.
We can write $\mathcal{E}$ as inductive system $(\mathcal{E}_{i})_{i \in I}$, where each of its structure morphisms is injective.
Then, there exists $i_{0} \in I$ such that $\mathcal{E}_{i_{0}}$ does not extend to a smooth $\mathbb{Q}_{l}$-sheaf on $X$.

\section{Proof of the main theorem}

\begin{dfn}
Let $S$ be a scheme and $X$ a scheme over $S$.
\begin{enumerate}
\item We shall say that $X$ is a {\it proper polycurve} (of relative dimension $n$) over $S$ if
there exists a positive integer $n$ and a (not necessarily unique) factorization of the structure morphism $X \rightarrow S$
\begin{equation}
X = X_{n} \rightarrow X_{n-1} \rightarrow \ldots \rightarrow X_{1} \rightarrow X_{0} = S
\end{equation}
such that, for each $i \in \{ 1, \ldots ,n \}$, $X_{i} \rightarrow X_{i-1}$ is a proper curve (cf. Definition \ref{eq:3def}).
We refer to the above factorization of $X \rightarrow S$ as a {\it sequence of parametrizing morphisms}.
\item We shall say that $X$ is a {\it proper polycurve with sections} (of relative dimension $n$) over $S$ if
$X$ is proper polycurve (of relative dimension $n$) over $S$, whose parametrizing morphisms
can be chosen so that for all $i \in \{ 1, \ldots ,n \}$, $ X_{i} \rightarrow X_{i-1}$ is a proper curve with section (cf. Definition \ref{eq:3def}).
We refer to such a sequence of parametrizing morphisms as  a {\it sequence of parametrizing morphisms with sections}.
\item Let $X$ be a proper polycurve with sections of relative dimension $n$ over $S$.
For a sequence of parametrizing morphisms with sections of this,
\begin{equation}
\mathcal{S} : X = X_{n} \rightarrow X_{n-1} \rightarrow \ldots \rightarrow X_{1} \rightarrow X_{0} = S,
\end{equation}
we call the maximum of the genera of fibers of $X_{i} \rightarrow X_{i-1}$ the maximal genus $g_{\mathcal{S}}$ of $\mathcal{S}$.
We call the minimum of the maximal genera of sequences of parametrizing 
morphisms with sections of $X$ the maximal genus $g_{X}$ of $X$.
\end{enumerate}
\end{dfn}

Before proving the main theorem, we prove a lemma.

\begin{dfn}
For a profinite group $G$, let us consider the smallest Tannakian subcategory of the category of finite dimensional
continuous $G$-representations over $\mathbb{Q}_{l}$ which contains all the finite dimensional
continuous $G$-representations over $\mathbb{Q}_{l}$ of rank $\leq r$ and which is closed under taking subquotients,
tensor products, duals, and extensions.
We write the Tannaka dual of the Tannakian subcategory with respect to the forgetful functor as $G^{l\text{-alg},r}$.
This definition is compatible to Definition \ref{3tannaka}.3 if $G$ is the etale fundamental group of $X$.
\end{dfn}

\begin{lem}
Let $S$ be a connected Noetherian scheme and take a geometric point $s$ over $S$. Let $r$ be a natural number.
Then, for a prime number $l$ which is invertible on $S$, the morphism
$\pi _{1}(S,s)^{l\text{-alg},r} \rightarrow (\pi _{1}(S,s)^{p'})^{l\text{-alg},r}$
is an isomorphism, if $p$ is $0$ or a prime number which deos not divide any of $l^h-1 (1\leq h\leq r)$.
\label{6char}
\end{lem}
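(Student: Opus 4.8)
The plan is to identify the morphism of the lemma with the map on Tannaka duals induced by inflation along the surjection $G:=\pi_1(S,s)\twoheadrightarrow Q:=\pi_1(S,s)^{p'}$, and then to show that this inflation functor is an equivalence between the two relevant Tannakian subcategories. Using the standard equivalence between smooth $\mathbb{Q}_l$-sheaves on the connected scheme $S$ and finite-dimensional continuous $\mathbb{Q}_l$-representations of $\pi_1(S,s)$ (which matches the two notions of $\pi_1(S,s)^{l\text{-alg},r}$), write $\mathcal{C}_G$ (resp.\ $\mathcal{C}_Q$) for the Tannakian subcategory of such representations of $G$ (resp.\ of $Q$) generated under subquotients, tensor products, duals and extensions by the representations of rank $\le r$; by definition their Tannaka duals with respect to the fibre functor at $s$ are $\pi_1(S,s)^{l\text{-alg},r}$ and $(\pi_1(S,s)^{p'})^{l\text{-alg},r}$. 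Inflation $\mathrm{infl}\colon\mathrm{Rep}_{\mathbb{Q}_l}(Q)\to\mathrm{Rep}_{\mathbb{Q}_l}(G)$ is exact, fully faithful (since $G\to Q$ is surjective), has essential image closed under subobjects, commutes with the four operations, and preserves rank; hence it carries $\mathcal{C}_Q$ into $\mathcal{C}_G$ and the induced homomorphism of Tannaka duals is exactly the map of the lemma. So it suffices to prove that $\mathrm{infl}\colon\mathcal{C}_Q\to\mathcal{C}_G$ is essentially surjective, i.e.\ that every object of $\mathcal{C}_G$ factors through $Q$; here $l\ne p$, so pro-$l$ groups are pro-$p'$.

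The heart of the argument will be to show that the class $\mathcal{A}$ of finite-dimensional continuous $\mathbb{Q}_l$-representations of $G$ with pro-$p'$ image (equivalently: those factoring through $Q$) is a Tannakian subcategory closed under subquotients, tensor products, duals and extensions and contains every representation of rank $\le r$; since $\mathcal{C}_G$ is the smallest such subcategory, this yields $\mathcal{C}_G\subseteq\mathcal{A}$. Closure under subquotients, tensor products and duals is immediate, because the image of $G$ under any of these constructions is a subquotient of a subgroup of a finite product of the original images, and the class of pro-$p'$ profinite groups is closed under subgroups, quotients and finite products. For $\rho\colon G\to GL_m(\mathbb{Q}_l)$ with $m\le r$, the image $\rho(G)$ is compact, hence conjugate into $GL_m(\mathbb{Z}_l)$, and the kernel of $GL_m(\mathbb{Z}_l)\to GL_m(\mathbb{F}_l)$ is pro-$l$; so $\rho(G)$ is an extension of a finite subgroup of $GL_m(\mathbb{F}_l)$ by a pro-$l$ group. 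Since $|GL_m(\mathbb{F}_l)|=l^{\binom{m}{2}}\prod_{h=1}^{m}(l^h-1)$ and, by hypothesis, $p\ne l$ and $p\nmid l^h-1$ for $1\le h\le m$, that finite quotient has order prime to $p$; as an extension of a finite prime-to-$p$ group by a pro-$l$ (hence pro-$p'$) group is again pro-$p'$, we get $\rho\in\mathcal{A}$.

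The remaining point is closure of $\mathcal{A}$ under extensions, where the ranks are no longer bounded, so the computation above does not apply directly. Given $0\to V'\to E\to V''\to 0$ with $V',V''\in\mathcal{A}$, let $N$ be the closed normal kernel of the action of $G$ on $V'\oplus V''$; then $G/N$ is pro-$p'$, being a subgroup of the product of the pro-$p'$ images on $V'$ and $V''$. On $N$ the representation $E$ takes values in the unipotent radical of the stabilizer of the filtration, the additive group $\mathrm{Hom}(V'',V')\cong\mathbb{Q}_l^{n}$, and the induced continuous homomorphism $N\to\mathbb{Q}_l^{n}$ has compact image; since every compact subgroup of $\mathbb{Q}_l^{n}$ is pro-$l$, hence pro-$p'$, the image of $N$ in $GL(E)$ is pro-$p'$. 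Thus the image of $E$ is an extension of a quotient of $G/N$ by a pro-$p'$ group, hence pro-$p'$, so $E\in\mathcal{A}$. This gives $\mathcal{C}_G\subseteq\mathcal{A}$. Feeding this back: any $V\in\mathcal{C}_G$ is isomorphic to $\mathrm{infl}(V_0)$ for a unique $Q$-representation $V_0$, and one then checks that the essential image of $\mathrm{infl}|_{\mathcal{C}_Q}$ is a Tannakian subcategory of $\mathcal{C}_G$ closed under the four operations — for extensions one descends the extension from $G$ to $Q$ using $\mathcal{C}_G\subseteq\mathcal{A}$, and for the other operations one uses full faithfulness together with the fact that an $N$-stable subspace of an inflated representation is $Q$-stable — and containing the rank $\le r$ objects; by minimality of $\mathcal{C}_G$ it equals $\mathcal{C}_G$, so $V_0\in\mathcal{C}_Q$. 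Hence $\mathrm{infl}\colon\mathcal{C}_Q\to\mathcal{C}_G$ is an equivalence compatible with the fibre functors, and the induced map of Tannaka duals is an isomorphism.

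I expect the main obstacle to be precisely the verification that $\mathcal{A}$ is a Tannakian subcategory: the hypothesis on $p$ enters only through the order of $GL_r(\mathbb{F}_l)$ and so governs only the generating representations of rank $\le r$, whereas closure under extensions must be obtained separately from the unrelated, purely topological fact that compact subgroups of $\mathbb{Q}_l^{n}$ are pro-$l$; everything else is a formal manipulation of Tannakian subcategories.
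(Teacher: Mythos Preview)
Your proposal is correct and follows essentially the same approach as the paper: show that every continuous $\mathbb{Q}_l$-representation of $G=\pi_1(S,s)$ of rank $\le r$ has pro-$p'$ image via the order of $GL_r$ over $\mathbb{Z}/l^n$ (equivalently, over $\mathbb{F}_l$), and that the class of representations with pro-$p'$ image is closed under subquotients, tensor, dual, and extensions, the last point using that the kernel of $\mathrm{Im}(G\to\mathrm{Aut}\,V)\to\mathrm{Aut}\,V_1\times\mathrm{Aut}\,V_2$ is pro-$l$. Your write-up is in fact more careful than the paper's at the final step, explicitly descending to show $\mathcal{C}_Q\simeq\mathcal{C}_G$ rather than stopping at $\mathcal{C}_G\subseteq\mathcal{A}$.
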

\begin{proof}
 For any continuous representation of $\pi _{1}(S,s)$ over $\mathbb{Q}_{l}$ whose rank is $\leq r$,
the action of $\pi _{1}(S,s)$ factors $\pi _{1}(S,s)^{p'}$ since
$GL(r,\mathbb{Z}_{l}) \cong \underset{\longleftarrow}{\text{lim}} GL(r,\mathbb{Z}/l^{n}\mathbb{Z})$ 
and the order of $GL(r,\mathbb{Z}/l^{n}\mathbb{Z})$ is $l^{(n-1)r^2}(l^{r}-1)(l^{r}-l)\ldots (l^{r}-l^{r-1})$.

 Let $V_{1}, V_{2}$ be finite dimensional continuous representation of $\pi _{1}(S,s)$ over $\mathbb{Q}_{l}$ whose actions of $\pi _{1}(S,s)$ 
factor $\pi _{1}(S,s)^{p'}$. It is easy to see that any subquotient of $V_{1}$, $V_{1}\otimes V_{2}$, and $V_{1}^{\vee}$ are
$\pi _{1}(S,s)^{p'}$-representations. What we should show is for any extension $V$ of $\pi _{1}(S,s)$-representation
\begin{equation}
0 \rightarrow V_{1} \rightarrow V \rightarrow V_{2} \rightarrow 0,
\end{equation}
the action factors $\pi _{1}(S,s)^{p'}$. Thus it is sufficient to show the image $\text{Im}(\pi _{1}(S,s) \rightarrow \text{Aut}V)$
is a pro-$p'$ group. It follows from the fact that the image of the homomorphism
\begin{equation}
\text{Im}(\pi _{1}(S,s) \rightarrow \text{Aut}V) \rightarrow  \text{Aut}V_{1} \times \text{Aut}V_{2}
\end{equation}
is pro-$p'$ and its kernel is pro-$l$.
Therefore $\pi _{1}(S,s)^{l\text{-alg},r}$ and $(\pi _{1}(S,s)^{p'})^{l\text{-alg},r}$ are isomorphic.
\end{proof}

 Let $K$ be a discrete valuation field with valuation ring $O _{K}$ and residue field $k$ of characteristic $p \geq 0$.
Let $X$ be a scheme of finite type geometrically connected over $K$.
For any closed point $x \in X$, let $K(x)$ be the residue field of $x$ and $G_{K(x)}$ the absolute Galois group of $K(x)$.
Choose a valuation ring $O_{K(x)}$ of $K(x)$ over $O_{K}$ and let $I_{K(x)}$ be the inertia subgroup 
of $O_{K(x)}$ (which is well-defined up to conjugation).
Because $x$ is $K(x)$-rational, $\pi_1(X \otimes K(x)^{\rm sep}, \bar{x})^{p'}$ admits 
an action (not an outer action) of $G_{K(x)}$. Note that the triviality of the action of inertia is independent of its choice.

\begin{thm}
Assume that $X$ is a proper polycurve with sections over $K$ and $g = g_{X}$ is the maximal genus of $X$.
Consider the following conditions.
\begin{description}
\item{(A)}\mbox{} $X$ has good reduction.
\item{(B)}\mbox{} The action of $I_{K(x)}$ on
$\pi _1 (X \otimes K(x)^{\mathrm{sep}} ,\bar{x})^{p'}$ is trivial for any closed point $x \in X$, valuation ring $O_{K(x)}$ of $K(x)$ over $O_{K}$,
and geometric point $\bar{x}$ over $\mathrm{Spec} K(x)$.
\end{description}
Then, we have (A) $\Rightarrow$ (B). If we assume that $p = 0$ or $p > 2g+1$, (B) $\Rightarrow$ (A) follows.
\end{thm}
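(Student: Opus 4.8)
The plan is to prove $(A)\Rightarrow(B)$ directly and $(B)\Rightarrow(A)$ by induction on the number $n$ of parametrizing morphisms in a sequence $\mathcal{S}$ with sections realizing the maximal genus $g$. A proper polycurve with sections is proper, smooth, geometrically connected over $K$ and has a $K$-rational point, so $(A)\Rightarrow(B)$ is immediate from the Claim in Remark \ref{3hgr}: if $\mathfrak{X}$ is a smooth model of $X$ over $O_K$, then for a closed point $x$ and a valuation ring $O_{K(x)}\supset O_K$ the base change $\mathfrak{X}\otimes_{O_K}O_{K(x)}$ is a smooth model of $X\otimes_K K(x)$, and the Claim, applied over $K(x)$ with the $K(x)$-rational point $x$, gives the triviality of the $I_{K(x)}$-action on $\pi_1(X\otimes K(x)^{\mathrm{sep}},\bar x)^{p'}$. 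For $(B)\Rightarrow(A)$ fix $\mathcal{S}\colon X=X_n\to X_{n-1}\to\cdots\to X_0=\mathrm{Spec}\,K$ with sections $s_i$ and $g_{\mathcal S}=g$, and induct on $n$; the case $n\leq 1$ is Proposition \ref{3gr} (together with N\'eron--Ogg--Shafarevich for genus $1$) applied to the $K$-rational point $s_1$. For the step, truncating $\mathcal{S}$ exhibits $X_{n-1}$ as a proper polycurve with sections with $g_{X_{n-1}}\leq g$, and $(B)$ for $X_n$ implies $(B)$ for $X_{n-1}$: a closed point $y$ of $X_{n-1}$ gives the point $s_n(y)$ of $X_n$ with residue field $K(y)$ (a section is a closed immersion), and $f_n$ induces a $G_{K(y)}$-equivariant surjection $\pi_1(X_n\otimes K(y)^{\mathrm{sep}})^{p'}\twoheadrightarrow\pi_1(X_{n-1}\otimes K(y)^{\mathrm{sep}})^{p'}$, so triviality of the $I_{K(y)}$-action on the source forces it on the target. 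Hence, by the induction hypothesis (applicable since $p=0$ or $p>2g+1\geq 2g_{X_{n-1}}+1$), there is a smooth model $\mathfrak{X}_{n-1}$ of $X_{n-1}$, which we take with geometrically connected fibers; then its special fiber is integral with generic point $\xi$, so $O_{\mathfrak{X}_{n-1},\xi}$ is a discrete valuation ring with fraction field the function field $k(X_{n-1})$ of $X_{n-1}$ and residue characteristic $p$.

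Put $r=2g$ and choose a prime $l\neq p$ with $p\nmid l^h-1$ for all $1\leq h\leq 2g$: such $l$ exists by Dirichlet's theorem, taking $l$ in the residue class modulo $p$ of an element of order $>2g$ of $(\mathbb{Z}/p\mathbb{Z})^\times$ (whose order is $p-1>2g$), and any $l$ works if $p=0$. Since the fibers of $f_n$ have genus $\leq g$, $R^1f_{n*}\mathbb{Q}_l$ has rank $\leq r$, so Theorem \ref{4hes} applies to $f_n$ with its section $s_n$: the relatively $l$-unipotent fundamental group $\mathcal{E}:=\pi_1(X_n/X_{n-1},r,s_n)^{\text{rel-}l\text{-unip}}$ is an affine group scheme over $\text{Et}_l^{\leq r}(X_{n-1})$ whose restriction $x^*\mathcal{E}$ along a closed point $x\hookrightarrow X_{n-1}$ is, by Theorem \ref{4hes} and $G_{K(x)}$-equivariance, the affine $\mathbb{Q}_l$-group scheme $\pi_1((X_n\times_{X_{n-1}}x)\otimes K(x)^{\mathrm{sep}})^{l\text{-unip}}$ appearing as the normal subgroup of the homotopy exact sequence, equipped with its natural $G_{K(x)}$-action. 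The key point is that $(B)$ forces $I_{K(x)}$ to act trivially on every $x^*\mathcal{E}$: by the choice of $l$ and the argument of Lemma \ref{6char}, every object of $U_{f_n}\text{Et}_l^{\leq r}(X_n\otimes K(x)^{\mathrm{sep}})$ — built under subquotients, tensor products, duals and extensions from pullbacks along $f_n$ of rank-$\leq r$ sheaves — corresponds to a representation of $\pi_1(X_n\otimes K(x)^{\mathrm{sep}})$ factoring through its pro-$p'$ quotient; since $x$ is $K(x)$-rational on $X_n$ via $s_n$, $G_{K(x)}$ acts on this Tannakian category through its action on $\pi_1(X_n\otimes K(x)^{\mathrm{sep}})$, and $(B)$ makes $I_{K(x)}$ act trivially on the pro-$p'$ quotient, hence on every such object, hence on its Tannaka dual $\pi_1(X_n\otimes K(x)^{\mathrm{sep}})^{l\text{-rel-unip},r}$, and a fortiori on the subgroup $x^*\mathcal{E}$.

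Next I extend $\mathcal{E}$ across $\mathfrak{X}_{n-1}$: regard the affine ring $\mathcal{O}(\mathcal{E})$ as an ind-object of $\text{Et}_l^{\leq r}(X_{n-1})$, i.e.\ an ind-smooth $\mathbb{Q}_l$-sheaf on $X_{n-1}$, and suppose it does not extend to an ind-smooth $\mathbb{Q}_l$-sheaf on $\mathfrak{X}_{n-1}$. By Corollary \ref{5el} and the ind-version noted after it, there are a closed point $z$ of the special fiber of $\mathfrak{X}_{n-1}$ and a line $L\subset T_z\mathfrak{X}_{n-1}$ with property $(*)$. Using a regular system of parameters of $O_{\mathfrak{X}_{n-1},z}$ one constructs an integral regular one-dimensional scheme $C$, flat and of finite type over $O_K$, and a morphism $\varphi\colon(C,c)\to(\mathfrak{X}_{n-1},z)$ with tangent image $L$, with $\varphi^{-1}(X_{n-1})\neq\emptyset$, and with $k(z)\subset k(c)$ separable — take, after passing to a convenient neighborhood, a suitably parametrized curve with the prescribed $1$-jet on which a uniformizer of $O_K$ restricts to a nonzero function, and to a square if $L$ is tangent to the special fiber. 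Then $x:=\varphi(\eta_C)$ is a closed point of $X_{n-1}$, $O_{C,c}$ a valuation ring over $O_K$ with fraction field $K(x)$, and $\varphi^*\mathcal{O}(\mathcal{E})$ restricted to $\varphi^{-1}(X_{n-1})=\mathrm{Spec}\,K(x)$ is $\mathcal{O}(x^*\mathcal{E})$, on which $I_{K(x)}$ acts trivially by the previous paragraph; so $\varphi^*\mathcal{O}(\mathcal{E})$ is unramified at $c$, contradicting $(*)$. Hence $\mathcal{E}$ extends over $\mathfrak{X}_{n-1}$, and restricting the extension to $\mathrm{Spec}\,O_{\mathfrak{X}_{n-1},\xi}$ shows that the fiber of $\mathcal{E}$ at the generic point of $X_{n-1}$ — the $l$-unipotent fundamental group of the proper curve with section $X_n\times_{X_{n-1}}\mathrm{Spec}\,k(X_{n-1})$, with its Galois action — is unramified at $\xi$. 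By Proposition \ref{3gr} (with our $l$, which differs from the residue characteristic $p$ of $O_{\mathfrak{X}_{n-1},\xi}$), this curve has good reduction over $O_{\mathfrak{X}_{n-1},\xi}$, the local ring at the generic point of the codimension-one complement $\mathfrak{X}_{n-1}\setminus X_{n-1}$; since $\mathfrak{X}_{n-1}$ is regular, the result of Moret-Bailly \cite{Mor} then provides a proper smooth extension $\mathfrak{X}_n\to\mathfrak{X}_{n-1}$ of $f_n$, and composing with $\mathfrak{X}_{n-1}\to\mathrm{Spec}\,O_K$ yields a smooth model of $X=X_n$. This completes the induction.

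The step I expect to require the most care is the extension of $\mathcal{E}$: making precise, through the relative theory of Tannakian categories, the translation of the group scheme $\mathcal{E}$ over $\text{Et}_l^{\leq r}(X_{n-1})$ into an ind-smooth $\mathbb{Q}_l$-sheaf on $X_{n-1}$ compatibly with restriction to closed points; producing the test curve $C$ realizing an arbitrary prescribed tangent direction $L$ at $z$ — in particular when $L$ is tangent to the special fiber — while keeping $C$ of finite type over $O_K$; and checking that triviality of the $I_{K(x)}$-action on the affine group scheme $x^*\mathcal{E}$ is exactly unramifiedness of $\varphi^*\mathcal{O}(\mathcal{E})$, so that Corollary \ref{5el} really yields the contradiction. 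The remaining ingredients — the homotopy exact sequence of Theorem \ref{4hes}, Proposition \ref{3gr}, Lemma \ref{6char}, and Moret-Bailly's extension theorem — enter as essentially black boxes once the setup is in place.
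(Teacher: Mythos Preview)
Your proof is correct and follows essentially the same route as the paper: $(A)\Rightarrow(B)$ via Remark \ref{3hgr}; $(B)\Rightarrow(A)$ by induction on $n$, passing to a smooth model $\mathfrak{X}_{n-1}$ by induction, choosing $l$ via Dirichlet so that Lemma \ref{6char} applies with $r=2g$, using the homotopy exact sequence of Theorem \ref{4hes} to identify fibers of the relative unipotent fundamental group $\mathcal{E}$, extending $\mathcal{E}$ across $\mathfrak{X}_{n-1}$ by contradiction via Corollary \ref{5el} and an explicit test curve in the two tangent cases, and finishing with Proposition \ref{3gr} at $\xi$ plus Moret-Bailly. One small imprecision: in your test-curve step you write that $O_{C,c}$ has fraction field $K(x)$ with $x=\varphi(\eta_C)$, but the fraction field of $C$ may be a proper finite extension of the residue field $K(x)$ at the image point; the paper handles this by setting $O_{K(x)}=O_L\cap K(x)$ and noting that unramifiedness for $O_{K(x)}$ implies it for $O_L$, and your argument goes through with the same adjustment.
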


\begin{proof}
From Remark \ref{3hgr}, (A) $\Rightarrow$ (B) follows. Let us prove (B) $\Rightarrow$ (A).
 Fix a sequence of parameterizing morphisms with sections
$$
X = X_{n} \overset{f_{n}}{\underset{s_{n}}{\rightleftarrows}} X_{n-1} \overset{f_{n-1}}{\underset{s_{n-1}}{\rightleftarrows}} \ldots 
\overset{f_{i+1}}{\underset{s_{i+1}}{\rightleftarrows}} X_{i} \overset{f_{i}}{\underset{s_{i}}{\rightleftarrows}} X_{i-1} 
\overset{f_{i-1}}{\underset{s_{i-1}}{\rightleftarrows}} \ldots \overset{f_{2}}{\underset{s_{2}}{\rightleftarrows}} X_{1}
\overset{f_{1}}{\underset{s_{1}}{\rightleftarrows}} X_{0} = \mathrm{Spec}K
$$
of $X \rightarrow \mathrm{Spec}K$, whose maximal genus is $g$.

 We will show that $X_{n}$ has good reduction by induction on $n$.
 For $n = 1$, it is proved in Proposition \ref{3gr}. Now we assume that $n \geq 2$.
For any closed point $y \in X_{n-1}$, the natural surjection
$\pi _1 (X_{n} \otimes K(x)^{\mathrm{sep}},\bar{y})^{p'} \rightarrow \pi _1 (X_{n-1} \otimes K(x)^{\mathrm{sep}},\bar{y})^{p'}$
makes the action of $I_{K(y)}$ on $\pi _1 (X_{n-1} \otimes K(x)^{\mathrm{sep}},\bar{y})^{p'}$ trivial, where $\bar{y}$ is a geometric point above $y$.
Moreover, the above sequence of parameterizing morphism gives the condition of $X_{n-1} \rightarrow \mathrm{Spec}K$ about maximal genus.
Therefore, we may assume that we have a smooth model $\mathfrak{X}_{n-1}$ of $X_{n-1}$.

Fix a prime number $l$ such that $p$ does not divide any of $l^h-1 (1\leq h\leq 2g)$. Note that there exists such a prime number
by the theorem on arithmetic progressions and the assumption $p > 2g+1$.
 For any closed point $x \in X_{n-1}$ and any geometric point $\bar{x}$ over $x$, we have an exact sequence of affine group schemes
\begin{eqnarray}\begin{split}
1 \rightarrow \pi _{1}(X_{n}\times _{X_{n-1}}\bar{x},\bar{x})^{l\text{-unip}}
&\rightarrow \pi _{1}(X_{n}\times _{\mathrm{Spec}K}\bar{x},\bar{x})^{l\text{-rel-unip},2g} \\
&\rightarrow \pi _{1}(X_{n-1}\times _{\mathrm{Spec}K}\bar{x},\bar{x})^{l\text{-alg},2g} \rightarrow 1
\end{split}
\label{eq:hes}
\end{eqnarray}
by Theorem \ref{4hes}.

 Now we consider the diagram
\[
\xymatrix{
X_{n}\times _{X_{n-1}}K(X_{n-1}) \ar[r] \ar[d]^{f'_{n}} & X_{n} \ar[d]^{f_{n}} \ar@{.>}[r] & \mathfrak{X}_{n} \ar@{.>}[d]
& \mathfrak{X} \ar@{.>}[l] \ar@{.>}[d] \\
\mathrm{Spec}\,K(X_{n-1}) \ar[r] \ar@<2.mm>[u]^{s'_{n}} & X_{n-1} \ar[r] \ar@<2.mm>[u]^{s_{n}} & \mathfrak{X}_{n-1}
& \mathrm{Spec}\,O_{\mathfrak{X}_{n-1},\xi} \ar[l]
}
\]
where $K(X_{n-1})$ is the function field of $X_{n-1}$, $\xi$ is the generic point of the special fiber
$\mathfrak{X}_{n-1} \setminus  X_{n-1}$, $f'_{n}$, $s'_{n}$ are the base change of $f_{n}$, $s_{n}$ respectively,
and $O_{\mathfrak{X}_{n-1},\xi}$ is the local ring at $\xi$.
What we should show is that there exist a proper hyperbolic curve $\mathfrak{X}_{n}$ over 
$\mathfrak{X}_{n-1}$ whose base change by $X_{n-1} \rightarrow \mathfrak{X}_{n-1}$ is isomorphic to $X_{n}$.
Thanks to \cite{Mor}, it is sufficient for this to show that there exists a proper hyperbolic curve $\mathfrak{X}$ over
$\mathrm{Spec}\,O_{\mathfrak{X}_{n-1},\xi}$ whose base change by $\mathrm{Spec}\,K(X_{n-1}) \rightarrow \mathrm{Spec}\,O_{\mathfrak{X}_{n-1},\xi}$
is isomorphic to $X_{n}\times _{X_{n-1}}K(X_{n-1})$.
Fix an inertia subgroup $I \subset G_{K(X_{n-1})}$ at $\xi$ and a geometric point $\bar{t}$ of $X_{n}\times _{X_{n-1}}K(X_{n-1})$ above $\mathrm{Spec}\,K(X_{n-1})$.
To complete the proof, it comes down to show that the action of $I$ on $\pi _{1}(X_{n}\times _{X_{n-1}}\bar{t},\bar{t})^{l\text{-unip}}$
is trivial by Proposition \ref{3gr}.

 Let us denote the morphism $\mathrm{Spec}\,K(X_{n-1}) \rightarrow X_{n-1}$ as $i$. Then, we have the following exact sequences
of affine group schemes over $\text{Et}_{l}^{\leq 2g}(K(X_{n-1}))$:
\hspace{-1cm}
\scalebox{0.8}{
\begin{xy}
\xymatrix{
0 \ar[r] & \mathrm{Ker}f'_{n*} \ar[r] \ar[d]
& s_{n}'^{*}\pi (U_{f'_{n}}\text{Et}_{l}^{\leq 2g}(X_{n}\times _{X_{n-1}}K(X_{n-1}))) \ar[r]^(0.6){f_{n*}'} \ar[d]
& \pi (\text{Et}_{l}^{\leq 2g}(\mathrm{Spec}K(X_{n-1}))) \ar[d] \ar[r] & 0 \\ 
0 \ar[r] & i^{*}\mathrm{Ker}f_{n*} \ar[r] & i^{*}s_{n}^{*}\pi (U_{f_{n}}\text{Et}_{l}^{\leq 2g}(X_{n})) \ar[r]^{i^{*}(f_{n*})}
& i^{*}\pi (\text{Et}_{l}^{\leq 2g}(X_{n-1})) \ar[r] & 0.
}
\end{xy}}

By Theorem \ref{4hes}, $\mathrm{Ker}f'_{n*}$ and $i^{*}\mathrm{Ker}f_{n*}$ are isomorphic and their fibers at $\bar{t}$ are isomorphic to
$\pi _{1}(X_{n}\times _{X_{n-1}}\bar{t},\bar{t})^{l\text{-unip}}$. Let us denote the ind-smooth $\mathbb{Q}_{l}$-sheaf on
$X_{n-1}$ corresponding to $\mathrm{Ker}f_{n*}$ by $\mathcal{E}$.  
To show the triviality of the action of $I$ on $\pi _{1}(X_{n}\times _{X_{n-1}}\bar{t},\bar{t})^{l\text{-unip}}$, it is sufficient to prove
that $\mathcal{E}$ extends to $\mathfrak{X}_{n-1}$. To show this, we consider the following claim.

\bigskip
\textbf{Claim}
Let $x$ be a closed point in $\mathfrak{X}_{n-1} \setminus X_{n-1}$ and $O_{L}$ be any discrete valuation ring over $O_{K}$
whose field of fraction $L$ is a finite extension over $K$.
Then, for every morphism $\mathrm{Spec}\,O_{L} \rightarrow \mathfrak{X}_{n-1}$ over $O_{K}$ sending the closed point $y \in \mathrm{Spec}\,O_{L}$
to $x$, $\mathcal{E}|_{\mathrm{Spec}\,L}$ is unramified at $y$.

\bigskip
We prove the claim. Let us define the valuation ring $O_{K(x')}$ of the residue filed $K(x')$ at the image $x'$ of the generic point of $O_{L}$ by $O_{K(x')} = O_{L} \cap K(x')$.
By condition (B) and Lemma \ref{6char}, the action of $I$ on $\pi _{1}(X_{n}\times _{\mathrm{Spec}K}\bar{x'},\bar{x'})^{l\text{-rel-unip},2g}$
is trivial. Therefore for $O_{K(x')}$, this claim follows from (\ref{eq:hes}), and so is for  $O_L$.

Finally, we prove that $\mathcal{E}$ extends to $\mathfrak{X}_{n-1}$.
By the above claim and Corollary \ref {5el}, it is suffices to show that for all closed point $x$ in $\mathfrak{X}_{n-1} \setminus X_{n-1}$ and for all $1$ dimensional linear subspace $M \subset T_{x}\mathfrak{X}_{n-1}$ there exists a discrete valuation ring $O_{L}$ over $O_{K}$ whose field of fraction $L$ is a finite extension over $K$ and $O_{K}$-morphism  $\text{Spec}\, O_{L} \rightarrow \mathfrak{X}_{n-1}$ which induces the isomorphism from the tangent space of the closed point $y$ of $\text{Spec}\, O_{L}$ to $M$. Let us denote  the maximal ideal of the local ring $O_{\mathfrak{X}_{n-1},x}$ as $m_{x}$ and the $n-1$ dimensional linear subspace of $m_{x}/m_{x}^{2}$ annihilated by $M$ as $N$. Let us choose a regular system of parameter   $\{ t_{1} , \ldots , t_{n} \}$ of $m_{x}$ so that the image of $\{ t_{1} , \ldots , t_{n-1} \}$ becomes a basis of $N$.

Case 1 : If the image of the maximal ideal of $O_{K}$ to $m_{x}/m_{x}^{2}$ is not contained in $N$, the quotient ring $O_{\mathfrak{X}_{n-1},x}/(t_{1} , \ldots , t_{n-1})$ works as $O_{L}$. 

Case 2 :  If the image of the maximal ideal of $O_{K}$ to $m_{x}/m_{x}^{2}$ is contained in $N$, we may assume that $t_{1} = \varpi - t_{n}^{2}$ . Here, $\varpi$ is a generator of the maximal ideal of $O_{K}$. Then, the quotient ring $O_{\mathfrak{X}_{n-1},x}/(t_{1} , \ldots , t_{n-1})$ works as $O_{L}$.

\end{proof}

\section{Appendix}
In this section, we give a proof of Proposition \ref{3odatama}, which is not appeared in any published paper. Let us restate the proposition.

Let $S$ be the spectrum of a discrete valuation ring $O_K$, $\eta$ the generic point of $S$,
$s$ the closed point of $S$, $K = \kappa (\eta)$ the fractional field of $O_K$, $k = \kappa (s)$ 
the residue field of $O_K$, and $p$ the characteristic of $k$.

Consider a proper hyperbolic curve $X \rightarrow \mathrm{Spec}\,K$ and
take a geometric point $ \overline{t} $ over $X \otimes K^{\mathrm{sep}}$. 

\begin{prop} (\cite{Oda1}\cite{Oda2}\cite{Tama} section 5)
The following are equivalent.
\begin{enumerate}
\item $X$ has good reduction.
\item The outer action $I_K \rightarrow \mathrm{Out}( \pi _1 (X \otimes K^{\mathrm{sep}} ,\overline{t}) ^{p'})$
defined by (\ref{eq:3oa}) is trivial
\item There exists a prime number $l \neq p$ such that the outer action
$I_K \rightarrow \mathrm{Out}( \pi _1 (X \otimes K^{\mathrm{sep}} ,\overline{t}) ^{l})$ defined by (\ref{eq:3oa}) is trivial.
\item There exists a prime number $l \neq p$ such that the outer action of $I_K$ on 
$\pi _1 (X \otimes K^{\mathrm{sep}},\overline{t}) ^{l}/ \Gamma _{n}
\pi _1 (X \otimes K^{\mathrm{sep}},\overline{t}) ^{l}$ induced by (\ref{eq:3oa})
is trivial for all natural numbers $n$.
\end{enumerate}
\label{newoda}
\end{prop}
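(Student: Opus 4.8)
Write $G=\pi_1(X\otimes K^{\mathrm{sep}},\overline t)$, so that $G^{p'}$, $G^{l}$, $G^{l}/\Gamma_n G^{l}$ are the groups occurring in the statement. The plan is to prove the cycle $(1)\Rightarrow(2)\Rightarrow(3)\Rightarrow(4)\Rightarrow(1)$. The steps $(2)\Rightarrow(3)$ and $(3)\Rightarrow(4)$ are formal: the maximal pro-$l$ quotient of $G^{p'}$ and the quotients $G^{l}/\Gamma_n G^{l}$ are characteristic, so an outer action of $I_K$ on $G^{p'}$ induces compatible outer actions on $G^{l}$ and on $G^{l}/\Gamma_n G^{l}$, and triviality is inherited by these quotients. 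For $(1)\Rightarrow(2)$ I would run the argument in the first half of the proof of Proposition \ref{3gr}, where the section merely serves to fix compatible base points (so one gets honest actions there, and the same argument gives the statement for outer actions here): if $\mathfrak X/O_K$ is a smooth model, then over the strict henselization smooth proper base change gives $G^{p'}\cong\pi_1(\mathfrak X\otimes_{O_K}O_K^{\mathrm{sh}},\overline t)^{p'}$, and the homotopy exact sequence of $\mathfrak X\otimes_{O_K}O_K^{\mathrm h}\to\mathrm{Spec}\,O_K^{\mathrm h}$ shows that the induced outer action of $G_{\mathrm{Frac}\,O_K^{\mathrm h}}$ factors through $\pi_1(\mathrm{Spec}\,O_K^{\mathrm h})=G_k$, on which $I_K$ acts trivially.

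The real content is $(4)\Rightarrow(1)$; fix $l\neq p$ as in $(4)$. First I would reduce to $O_K$ strictly henselian: the inertia subgroup and its action on $G^{p'}$ are unchanged under $O_K\to O_K^{\mathrm{sh}}$, and good reduction of a curve of genus $\geq 2$ descends along this extension by a standard argument using uniqueness of the stable (hence smooth) model. Condition $(4)$ at level $n=2$ says $I_K$ acts trivially, a fortiori unipotently, on $G^{l}/\Gamma_2 G^{l}\cong H_1(X\otimes K^{\mathrm{sep}},\mathbb Z_l)$, hence on $H^1(X\otimes K^{\mathrm{sep}},\mathbb Q_l)$; by the semistable reduction theorem for curves over a henselian discrete valuation ring this already yields semistable reduction of $X$ over $O_K$, and contracting the destabilising rational components gives a stable model $\mathfrak Y/O_K$ whose geometric special fibre $Y$ is a stable curve of arithmetic genus $g$.

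It then remains to read off the outer action of $I_K$ on $G^{p'}$, and on each $G^{l}/\Gamma_n G^{l}$, from the geometry of $Y$. Via the $l$-adic (log) nearby-cycle formalism --- equivalently the specialization map from $G^{p'}$ to the log, or admissible, fundamental group of the special fibre of $\mathfrak Y$ together with Picard--Lefschetz --- a topological generator of the tame quotient of $I_K$ acts by a product $\prod_e T_{\delta_e}^{\,m_e}$ of powers of the Dehn twists along the vanishing cycles $\delta_e$ attached to the nodes $e$ of $Y$, each $m_e\geq 1$ being the thickness of the corresponding node in $\mathfrak Y$. The $\delta_e$ are disjoint essential simple closed curves on $\Sigma_g$, so if $Y$ has a node this product is a nontrivial, non-inner element of the mapping class group of $\Sigma_g$; the decisive point is that it remains nontrivial in $\mathrm{Out}(G^{l}/\Gamma_n G^{l})$ for $n$ large and for every $l\neq p$ --- a non-separating node is already detected on $H_1$, a separating one only on the deeper quotients, which is exactly why $(4)$ ranges over all $n$ --- using residual $l$-finiteness of surface groups, the embedding of a finitely generated torsion-free nilpotent group in its pro-$l$ completion, and the separation of the Johnson filtration. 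Triviality of all these outer actions then forces $Y$ to be smooth, so $\mathfrak Y/O_K$ is a smooth model of $X$, which is $(1)$. I expect the principal obstacle to be carrying out this last step over an arbitrary discrete valuation ring of possibly positive residue characteristic, where Oda's original Hodge-theoretic argument is unavailable: one needs the purely $l$-adic local-monodromy (weight-filtration) description of the Galois action on $G^{p'}$ and the fact that one node already produces nonzero monodromy somewhere in the tower $\{G^{l}/\Gamma_n G^{l}\}_n$; the other inputs (the semistable reduction theorem, N\'eron--Ogg--Shafarevich for $\mathrm{Jac}(X)$, and descent to $O_K^{\mathrm{sh}}$) are standard up to bookkeeping.
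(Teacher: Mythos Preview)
Your treatment of $(1)\Rightarrow(2)\Rightarrow(3)\Rightarrow(4)$ matches the paper exactly: the paper also defers $(1)\Rightarrow(2)$ to the argument in Proposition~\ref{3gr} and dismisses $(2)\Rightarrow(3)\Rightarrow(4)$ as trivial. Your reduction for $(4)\Rightarrow(1)$ also begins similarly: the paper passes to $O_K$ complete with algebraically closed residue field, assumes the Jacobian has good reduction (so the stable model $\mathfrak X$ exists), and seeks a contradiction from the presence of a node.

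Where you diverge is in the endgame, and this is where your proposal has a genuine gap. You propose to detect the nontriviality of the Dehn-twist product in $\mathrm{Out}(G^{l}/\Gamma_n G^{l})$ directly, invoking residual $l$-finiteness, the Johnson filtration, and a ``purely $l$-adic local-monodromy description''. You are candid that this is the principal obstacle, and indeed it is: what you are describing is essentially a from-scratch reproof of Oda's theorem in positive residue characteristic, and you have not supplied one. The assertion that a product of Dehn twists along vanishing cycles remains outer-nontrivial on some nilpotent pro-$l$ quotient is exactly the content of the theorem, not an input to it.

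The paper sidesteps this entirely by a deformation trick. Starting from the classifying map $\mathrm{Spec}\,O_K\to\mathscr{M}_g$, it thickens the base to a two-dimensional regular local ring $A$ (a quotient of the strict henselization of $\mathscr{M}_g\times_{\mathbb{Z}_p}\mathrm{Spec}\,R$ at the image of the closed point, with $R=O_K[[t]]$ or $W(k)[[t]]$), arranged so that the family over $\mathrm{Spec}\,A$ restricts to $\mathfrak X$ along $A\to O_K$ and has the same nodal special fibre. The key point is that the other discrete valuation $A_{(t)}^{\mathrm{sh}}$ has residue characteristic \emph{zero}. Abhyankar's lemma gives outer isomorphisms
\[
\mathrm{Gal}(\bar L/L)^{p'}\;\cong\;\pi_1(\mathrm{Spec}\,A[1/t])^{p'}\;\cong\;\mathrm{Gal}(\bar K/K)^{p'},
\]
and since the monodromy representation $\rho$ already factors through the pro-$l$ quotient (because $I_K$ acts trivially on the abelianization), the monodromy over $K$ is identified with the monodromy over $L$. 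But over $L$ one is in the setting of Oda's original papers, so the transcendental argument of \cite{Oda2} applies and yields the required nontriviality at level $n=3$ (Theorem~\ref{newoda2}). In short: the paper does not prove an $l$-adic analogue of Oda's computation; it transports the problem back to characteristic-zero residue field where Oda's Hodge-theoretic proof is already available. That reduction is the idea your proposal is missing.
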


We can show 1 $\Rightarrow$ 2 as the proof of Proposition \ref{3gr}, and the assertion that 2  $\Rightarrow$ 3 $\Rightarrow$ 4 is trivial. 
To show 4 $\Rightarrow$ 1, we may assume that the Jacobian of $X$ has good reduction and that $X$ has bad reduction as in \cite{Oda2}.
Then, we have the outer action of the absolute Galois group: 
$$\rho_{l} (\text{mod}\, m) : I_{K} \rightarrow \text{Out}\,\pi _1 (X \otimes K^{\mathrm{sep}},\overline{t}) ^{l}/ \Gamma _{m+1}
\pi _1 (X \otimes K^{\mathrm{sep}},\overline{t}) ^{l}.$$

\begin{thm}
Under the above assumption, the homomorphism
$$\rho_{l} (\text{mod}\, 2) : I_{K} \rightarrow \text{Out}\,\pi _1 (X \otimes K^{\mathrm{sep}},\overline{t}) ^{l}/ \Gamma _{3}
\pi _1 (X \otimes K^{\mathrm{sep}},\overline{t}) ^{l}$$
is trivial, and the homomorphism
$$\rho_{l} (\text{mod}\, 3) : I_{K} \rightarrow \text{Out}\,\pi _1 (X \otimes K^{\mathrm{sep}},\overline{t}) ^{l}/ \Gamma _{4}
\pi _1 (X \otimes K^{\mathrm{sep}},\overline{t}) ^{l}$$
which factors through $I_{K} \rightarrow I^{l}_{K} \rightarrow \text{Out}\,\pi _1 (X \otimes K^{\mathrm{sep}},\overline{t}) ^{l}/ \Gamma _{4}
\pi _1 (X \otimes K^{\mathrm{sep}},\overline{t}) ^{l}$, defines the injective homomorphism,
$$\rho ' _{l} (\text{mod}\, 3) : I_{K}^{l} \rightarrow \text{Out}\,\pi _1 (X \otimes K^{\mathrm{sep}},\overline{t}) ^{l}/ \Gamma _{4}
\pi _1 (X \otimes K^{\mathrm{sep}},\overline{t}) ^{l}.$$
\label{newoda2}
\end{thm}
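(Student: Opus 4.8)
The plan is to describe the outer action of $I_K$ on $\pi:=\pi_1(X\otimes K^{\mathrm{sep}},\overline t)^{l}$ one step of the lower central series at a time, using the weight filtration on the $l$-adic unipotent fundamental group together with the stable model of $X$. Write $\mathrm{gr}_n:=\Gamma_n\pi/\Gamma_{n+1}\pi$ and let $\mathfrak g=\bigoplus_n\mathrm{gr}_n\otimes\mathbb Q_l$ be the associated graded Lie algebra, so that $\mathrm{gr}_1\otimes\mathbb Q_l=H_1(X\otimes K^{\mathrm{sep}},\mathbb Q_l)$, $\mathrm{gr}_2\otimes\mathbb Q_l=\wedge^2(\mathrm{gr}_1\otimes\mathbb Q_l)/\mathbb Q_l\omega$ for the symplectic element $\omega$, and $\mathrm{gr}_3\otimes\mathbb Q_l$ is the degree-$3$ part of the free Lie algebra on $\mathrm{gr}_1\otimes\mathbb Q_l$ modulo $[\mathrm{gr}_1\otimes\mathbb Q_l,\omega]$. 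First I would record the formal inputs: by N\'eron--Ogg--Shafarevich and the hypothesis that $\mathrm{Jac}(X)$ has good reduction, $I_K$ acts trivially on $\mathrm{gr}_1$, hence on $\wedge^2\mathrm{gr}_1$, hence on its quotient $\mathrm{gr}_2$; consequently the image of $\rho_l(\mathrm{mod}\,n)$ lies in the group of outer automorphisms of the finitely generated nilpotent pro-$l$ group $\pi/\Gamma_{n+1}\pi$ acting trivially on the Frattini quotient, which is a pro-$l$ group, so $\rho_l(\mathrm{mod}\,n)$ factors through $I_K\twoheadrightarrow I_K^{l}$. This yields the asserted factorization of $\rho_l(\mathrm{mod}\,3)$.

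After a finite base change one may assume $I_K$ acts on $\mathfrak g$ unipotently; let $N$ be the monodromy derivation of $\mathfrak g$ (the logarithm of the action of a topological generator of $I_K^{l}$). Since $\mathrm{Jac}(X)$ has good reduction, $\mathrm{gr}_1\otimes\mathbb Q_l$ is pure of weight $-1$, so Deligne's weight filtration on $\mathfrak g$ coincides with the lower-central-series filtration ($W_{-n}\mathfrak g=\Gamma_n\mathfrak g$); as $N$ is a morphism of mixed structures of type $(-1,-1)$, it satisfies $N(\Gamma_n\mathfrak g)\subseteq\Gamma_{n+2}\mathfrak g$, i.e. $N$ raises the lower central degree by at least two. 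In particular $N$ vanishes on the Lie algebra of $\pi/\Gamma_3\pi$, which shows that $\rho_l(\mathrm{mod}\,2)$ is trivial. It also follows that $\rho'_l(\mathrm{mod}\,3)$ takes values in the group $\mathrm{Out}^{(2)}$ of outer automorphisms of $\pi/\Gamma_4\pi$ trivial modulo $\Gamma_3$; a derivation representing such an automorphism raises degree by two, hence squares to zero on the length-three group $\pi/\Gamma_4\pi$ and commutes with all others, so $\mathrm{Out}^{(2)}$ is abelian and the exponential identifies it with a $\mathbb Z_l$-submodule of $\mathrm{Hom}(\mathrm{gr}_1,\mathrm{gr}_3)/\mathrm{ad}(\mathrm{gr}_2)$.

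Therefore $\rho'_l(\mathrm{mod}\,3)\colon I_K^{l}\cong\mathbb Z_l\to\mathrm{Out}^{(2)}$ is $\mathbb Z_l$-linear, sending a topological generator to the image $\overline N\in\mathrm{Hom}(\mathrm{gr}_1,\mathrm{gr}_3)/\mathrm{ad}(\mathrm{gr}_2)$ of $N|_{\mathrm{gr}_1}\colon\mathrm{gr}_1\to\mathrm{gr}_3$ (the only nonzero graded component of $N$ on $\pi/\Gamma_4\pi$), and injectivity of $\rho'_l(\mathrm{mod}\,3)$ is equivalent to $\overline N\neq 0$ after tensoring with $\mathbb Q_l$. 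It remains to prove $\overline N\neq 0$, using that $X$ has bad reduction, and this is the step I expect to be the main obstacle. Following \cite{Oda2}, I would take the stable model $\mathcal X/O_{K'}$ over a finite extension: since $\mathrm{Jac}(X)$ has good reduction its dual graph is a tree, and since $X$ has bad reduction it has at least one node. By the $l$-adic Picard--Lefschetz formalism, the monodromy acts on $\pi$ as a product (with multiplicities) of $l$-adic "Dehn twists" around the vanishing cycles, one per node; each such cycle is separating and splits $X\otimes K^{\mathrm{sep}}$ into two subsurfaces of positive genus (by stability of $\mathcal X_0$), so its class lies in $\Gamma_2\pi$ with leading term a partial symplectic element $\omega_A\in\mathrm{gr}_2$.

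One then computes the class modulo $\Gamma_4$ of such a twist explicitly from $\omega_A$ --- this is the second Johnson homomorphism of a separating twist --- and checks that it is nonzero precisely because both genera are positive; finally one shows that the contributions of distinct nodes cannot cancel, for instance because the local monodromies are "positive" twists, so that pairing against a suitable $\mathrm{Sp}$-equivariant functional makes every term contribute with the same sign, or combinatorially by using a leaf edge of the tree whose contribution is not absorbed by the other (nested or disjoint) subsurfaces. Either way $\overline N\neq 0$, which proves the injectivity of $\rho'_l(\mathrm{mod}\,3)$; combined with condition (4) of Proposition \ref{newoda} this forces $I_K^{l}=0$, the contradiction needed for the implication (4) $\Rightarrow$ (1).
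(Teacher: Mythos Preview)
Your strategy is quite different from the paper's, and the difference matters because the whole point of this appendix is to extend Oda's theorem to an \emph{arbitrary} discrete valuation ring, in particular one whose residue field has positive characteristic and may be taken algebraically closed. The paper does not attempt a direct $l$-adic computation at all: it builds a two-parameter deformation of the stable model over the moduli stack $\mathscr{M}_g$, slices it to get a two-dimensional regular local ring $A$ with a map $A\to O_K$, and uses Abhyankar's lemma to identify the pro-$p'$ tame inertia of $K$ with that of the fraction field $L$ of $A^{\mathrm{sh}}_{(t)}$. Since $A^{\mathrm{sh}}_{(t)}$ has residue characteristic $0$, the monodromy representation for $\mathfrak{Y}\otimes L$ is governed by the transcendental computation of \cite{Oda2}, and the identification transports the conclusion back to $K$. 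In other words, the paper's proof is a reduction-to-characteristic-zero argument, not an intrinsic $l$-adic one.

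Your argument, by contrast, invokes two inputs that are precisely what is missing over a general $O_K$. First, the assertion that ``$N$ is a morphism of mixed structures of type $(-1,-1)$, hence $N(\Gamma_n\mathfrak g)\subseteq\Gamma_{n+2}\mathfrak g$'' presupposes a weight formalism. Over $\mathbb{C}$ this is Hodge theory; over a local field with finite residue field it is Frobenius weights; but after the standard reduction (which you do not object to) $k$ is algebraically closed and there is no Frobenius, so ``pure of weight $-n$'' has no content and there is nothing forcing $N$ to skip a step in the lower central filtration. Your proof of the triviality of $\rho_l(\bmod\,2)$ rests entirely on this. Second, the claim that ``by the $l$-adic Picard--Lefschetz formalism, the monodromy acts on $\pi$ as a product of $l$-adic Dehn twists'' is not a theorem in the literature for the pro-$l$ \emph{fundamental group} over a general DVR; SGA~7 gives Picard--Lefschetz for cohomology, and lifting it to $\pi_1$ is exactly what \cite{Oda2} does transcendentally. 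Citing \cite{Oda2} at that step is circular in positive residue characteristic, and the subsequent Johnson-homomorphism computation, however correct over $\mathbb{C}$, is left without foundation here.

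So the gap is not in the endgame (your Dehn-twist/Johnson analysis is the right heuristic and is essentially what \cite{Oda2} carries out) but in the passage from the arithmetic setting to a place where that analysis is available. The paper's moduli-deformation plus Abhyankar step is precisely the missing bridge; if you want to keep your direct approach you would have to either supply an algebraic Picard--Lefschetz formula for $\pi_1^{l}$ valid over any DVR, or justify the weight-$(-1,-1)$ property of $N$ without Frobenius, and neither is routine.
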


Let us denote the stable model of $X$ by $\mathfrak{X}$ and suppose that the special fiber has $n$ nodes $x_{1}. \ldots, x_{n}$. To show Theorem \ref{newoda2}, we may assume that $O_{K}$ is a complete discrete valuation ring and $k$ is an algebraically closed field.
Let us construct a two dimensional family of stable curves.
Consider the classifying morphism of $\mathfrak{X}$
$$\text{cl}_{\mathfrak{X}} : \text{Spec}(O_{K}) \rightarrow \mathscr{M}_{g}.$$
Here, $\mathscr{M}_{g}$ is the moduli stack of stable curves of genus $g$ over $\mathbb{Z}_{p}$.
We write the induced morphism $\text{Spec}(k) \rightarrow \mathscr{M}_{g}$ as $\kappa$.
As a regular system of parameters of the strict henselian local ring $\text{Spec}\, O^{\text{sh}}_{\mathscr{M}_{g}, \kappa}$ at $\kappa$, we can choose $3g - 2$ elements $p, T_{1} , \ldots , T_{3g-3}$. We can assume that the local equation of the singularity $\tilde{x}_{i}$ of the universal family $\mathscr{C}_{g}$ over $x_{i}$ is given by 
$$X_{i}Y_{i} = T_{i} \quad (1\leq i \leq n).$$
If we write the local equation of $x_{i}$ as 
$$X'_{i}Y'_{i} = a_{i} \quad (0 \neq a_{i} \in \pi O_{K}),$$ where $\pi$ is a uniformizer of $O_{K}$,
the local homomorphism
$O^{\text{sh}}_{\mathscr{C}_{g}, \tilde{x}_{i}} \rightarrow O^{\text{sh}}_{\mathfrak{X},x_{i}}$
sends $X_{i}$, $Y_{i}$ and $T_{i}$ to $u_{i}X'_{i}$, $u_{i}^{-1}Y'_{i}$ and $a_{i}$ by \cite{Moch3}\S 3.7, \S 3.8 and \cite{Kato} Lemme 2.1, Lemme 2.2. Here, $u_{i}$ is a unit in $O^{\text{sh}}_{\mathfrak{X},x_{i}}.$

Let us consider the ring
$$R = \begin{cases} O_{K}[[t]] & (\text{char} K = 0) \\
    W(k)[[t]] & (\text{char} K = p)
  \end{cases}$$
and the natural morphism $R \rightarrow O_{K}$ which sends $t$ to $\pi$. Here, $W(k)$ is the Witt ring for $k$.

We write the local homomorphism $O_{\tilde{\mathscr{M}_{g}},\tilde{\kappa}}^{\text{sh}} \rightarrow O_{K}$ the induced by the natural morphism $\tilde{\kappa} : \text{Spec}\, k \rightarrow \text{Spec}\, O_{K} \rightarrow \mathscr{M}_{g}\times _{\mathbb{Z}_{p}} \text{Spec}\, R =: \tilde{\mathscr{M}_{g}}$ as $\tilde{\phi}$. 
The elements of $O_{\tilde{\mathscr{M}}_{g},\tilde{\kappa}}^{\text{sh}}$
$$ \begin{cases} \pi, t, T_{1}, \ldots, T_{3g-3} & (\text{char} K = 0) \\
      p, t, T_{1}, \ldots, T_{3g-3} & (\text{char} K = p)
  \end{cases}$$
become a regular system of parameters. 

Choose an element $\tilde{a}_{i} \in R$ so that its image in $O_{K}$ is equal to $a_{i}$ if $1\leq i \leq n$ and the image of $T_{i}$ in $O_{K}$ if $n+1\leq i $, and set $U_{i} = T_{i} -\tilde{a}_{i}$.
The subset of $O_{\tilde{\mathscr{M}}_{g},\tilde{\kappa}}^{\text{sh}}$
$$ \begin{cases} \pi, t, U_{1}, \ldots, U_{3g-3} & (\text{char} K = 0) \\
      p, t, U_{1}, \ldots, U_{3g-3} & (\text{char} K = p)
  \end{cases}$$
becomes a regular system of parameters again.
Then, it holds that $U_{i} \in \text{Ker} (\tilde{\phi})$.
We write the quotient ring $O^{\text{sh}}_{\tilde{\mathscr{M}}_{g}, \tilde{\kappa}}/(U_{1} , \ldots , U_{3g-3})$ as $A$ and the induced homomorphism $A \rightarrow O_{K}$ as $\psi$. If we denote the field of fraction of the strict henselization $A^{\text{sh}}_{(t)}$ of $A_{(t)}$ by $L$, then we get the following commutative diagram.
\[
\xymatrix{
\text{Spec}\, L \ar[r] \ar[d] & \text{Spec}\, A[1/t] \ar[d] & \text{Spec}\, K \ar[d] \ar[l]   \\
\text{Spec}\, A^{\text{sh}}_{(t)} \ar[r] & \text{Spec}\, A & \text{Spec}\, O_{K} \ar[l].
}
\]

Let us recall Abyankar's lemma.

\begin{prop}
Let $\bar{K}, \bar{L}$ be an separable closure of $K, L$.
Then we have the natural outer isomorphisms
$$\text{Gal}(\bar{L}/L)^{p'} \cong \pi_{1}(\text{Spec}\, A[1/t])^{p'} \cong \text{Gal}(\bar{K}/K)^{p'}.$$
\label{aby}
\end{prop}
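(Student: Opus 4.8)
The plan is to identify each of the three groups with the procyclic group $\varprojlim_{(n,p)=1}\mu_n\cong\hat{\mathbb{Z}}^{p'}$ by means of the Kummer covers $s^n=t$ attached to the parameter $t$, and then to observe that the two comparison maps in the statement match these generators to one another.

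First I would record that $A$ is a $2$-dimensional strictly henselian regular local ring: it is the quotient of the strictly henselian regular local ring $O^{\mathrm{sh}}_{\tilde{\mathscr{M}_g},\tilde\kappa}$ by the part $(U_1,\dots,U_{3g-3})$ of a regular system of parameters, which leaves $(\pi,t)$ (resp. $(p,t)$) as a regular system of parameters. In particular $A$ is a domain, so $\mathrm{Spec}\,A[1/t]$ is connected, and $D:=V(t)=\mathrm{Spec}(A/(t))$ is a single regular (hence normal-crossings) divisor. Then I would compute $\pi_1(\mathrm{Spec}\,A[1/t])^{p'}$: given a connected finite \'etale cover of $\mathrm{Spec}\,A[1/t]$ whose Galois group has order prime to $p$, the Zariski--Nagata purity theorem (\cite{SGA1} X Theorem 3.4) extends its normalization to a cover of $\mathrm{Spec}\,A$ branched only along $D$, the branching is automatically tame, and by Abhyankar's lemma this cover becomes \'etale after the Kummer base change $A\to A_n:=A[s]/(s^n-t)$ for a suitable $n$ prime to $p$. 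The crucial point is that $A_n$, being finite over the strictly henselian local ring $A$ with $A_n/\mathfrak{m}_A A_n\cong k[s]/(s^n)$ local, is itself strictly henselian local with the same separably closed residue field, whence $\pi_1(\mathrm{Spec}\,A_n)=1$ and the cover splits over $A_n$. Since $\mu_n\subset A$ (Hensel, $n$ prime to $p$), the cover $A_n[1/t]/A[1/t]$ is Galois with group $\mu_n$; letting $n$ vary gives $\pi_1(\mathrm{Spec}\,A[1/t])^{p'}\cong\varprojlim_{(n,p)=1}\mu_n$, topologically generated by the classes of these Kummer covers.

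Next I would handle the two comparison maps. For the left one, $A^{\mathrm{sh}}_{(t)}$ is a strictly henselian discrete valuation ring with uniformizer $t$, so $\mathrm{Gal}(\bar L/L)^{p'}$ is its pro-$p'$ tame quotient, $\cong\varprojlim_{(n,p)=1}\mu_n$, topologically generated by the extensions $L(t^{1/n})$; since $\mathrm{Spec}\,L\to\mathrm{Spec}\,A[1/t]$ pulls $A_n[1/t]$ back to $L(t^{1/n})$, the induced map $\mathrm{Gal}(\bar L/L)^{p'}\to\pi_1(\mathrm{Spec}\,A[1/t])^{p'}$ carries generator to generator at every finite level and hence is an isomorphism. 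For the right one, $\psi:A\to O_K$ sends $t$ to the uniformizer $\pi$, so $\mathrm{Spec}\,K\to\mathrm{Spec}\,A[1/t]$ pulls $A_n[1/t]$ back to $K(\pi^{1/n})$; as $O_K$ is complete with algebraically closed residue field, $\mathrm{Gal}(\bar K/K)^{p'}$ is its pro-$p'$ tame quotient $\cong\varprojlim_{(n,p)=1}\mu_n$ generated by exactly these $K(\pi^{1/n})$, so $\mathrm{Gal}(\bar K/K)^{p'}\to\pi_1(\mathrm{Spec}\,A[1/t])^{p'}$ is likewise an isomorphism. Composing the two gives the desired chain of isomorphisms; since all three groups are abelian, the base-point indeterminacy is inner-trivial, so the isomorphisms are canonical and ``outer'' is vacuous here.

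The main obstacle I anticipate is the verification that the Kummer base change $A_n$ remains strictly henselian local with separably closed residue field --- this is precisely what makes $\mathrm{Spec}\,A_n$ simply connected and forces the pulled-back cover to split --- together with the bookkeeping that the single parameter $t$ is a uniformizer of $A^{\mathrm{sh}}_{(t)}$ and maps to a uniformizer of $O_K$, so that one and the same family of Kummer covers generates all three groups. The rest is a routine application of purity and Abhyankar's lemma.
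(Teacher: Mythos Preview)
Your proof is correct. In the paper this proposition is not proved at all: it is introduced with the phrase ``Let us recall Abyankar's lemma'' and then simply stated and used, so there is no argument to compare yours against. What you have written is a valid expansion of the intended justification: you correctly identify $A$ as a two-dimensional strictly henselian regular local ring with parameter $t$ cutting out a regular divisor, compute the pro-$p'$ fundamental group of $\mathrm{Spec}\,A[1/t]$ via purity plus Abhyankar (tameness being automatic for covers of degree prime to $p$), and then match it with the tame quotients on the two discrete-valuation sides using that $t$ is a uniformizer of $A^{\mathrm{sh}}_{(t)}$ and is sent by $\psi$ to the uniformizer $\pi$ of $O_K$. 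Your remark that the three groups are abelian, so the ``outer'' qualifier is vacuous, is also to the point. One small addendum you might make explicit: the residue field of $A_{(t)}$ has characteristic $0$ in both the $\mathrm{char}\,K=0$ and $\mathrm{char}\,K=p$ cases (since inverting $t$ leaves $\pi$, respectively $p$, as the remaining parameter), which confirms that $\mathrm{Gal}(\bar L/L)\cong\hat{\mathbb{Z}}$ before passing to the pro-$p'$ quotient.
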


 Let us start the proof of Theorem \ref{newoda2}.
Let us write the local monodromy homomorphism as
$$\rho : \text{Gal}(\bar{K}/K) \rightarrow \text{Out}\, \pi_{1}(X\otimes \bar{K},\bar{t})^{l} .$$
Since $\text{Gal}(\bar{K}/K)$ acts trivially on the abelianization $\pi_{1}(X\otimes \bar{K},\bar{t})^{l,\text{ab}}$, $\rho$ induces the homomorphism $\text{Gal}(\bar{K}/K)^{l} \rightarrow \text{Out}\,\pi_{1}(X\otimes \bar{K},\bar{t})^{l}.$
 By Proposition \ref{aby}, we have the morphism
$$\pi_{1}(\text{Spec}\, A[1/t]) \rightarrow \pi_{1}(\text{Spec}\, A[1/t])^{l} \rightarrow \text{Out}\, \pi_{1}(X\otimes \bar{K},\bar{t})^{l},$$
which is compatible with the isomorphisms of Proposition \ref{aby} and the monodoromy
homomorphism
$$\rho_{0}:\text{Gal}(\bar{L}/L) \rightarrow \text{Out}\,\pi_{1}(\mathfrak{Y}\otimes \bar{L},\xi).$$
Here, we write the restriction of the curve $\mathscr{C}_{g}$ to the scheme $\text{Spec}\, A$ as $\mathfrak{Y}$ and a geometric point of the curve $\mathfrak{Y}\otimes \bar{L}$ as $\xi$. It hold that $\tilde{a}_{i}$ is not equal to $0$ in $L$, so $\mathfrak{Y}\otimes L$ is smooth over $\text{Spec}\, L$.
Since the residue characteristic of $\text{Spec}\, A^{\text{sh}}_{(t)}$ is $0$, this theorem follows from the transcendental method in [Oda2].

\end{document}